\newtheorem{theorem}{Theorem}
\newtheorem{proposition}[theorem]{Proposition}
\newtheorem{lemma}[theorem]{Lemma}
\newtheorem{corollary}[theorem]{Corollary}
\newtheorem*{pingpongF}{Ping-Pong Lemma for Thompson's Group $\boldsymbol{F}$}
\newtheorem*{pingpongfree}{Ping-Pong Lemma for Free Products}
\newtheorem{question}[theorem]{Question}
\theoremstyle{definition}
\newtheorem{definition}[theorem]{Definition}
\newtheorem{example}[theorem]{Example}
\theoremstyle{remark}
\newtheorem{remark}[theorem]{Remark}
\newcommand{\newword}[1]{\textbf{#1}}
\newcommand{\D}{\mathbb{D}}
\newcommand{\N}{\mathbb{N}}
\newcommand{\C}{\mathbb{C}}
\newcommand{\R}{\mathbb{R}}
\newcommand{\Z}{\mathbb{Z}}
\newcommand{\E}{\mathcal{E}}
\newcommand{\M}{\mathcal{M}}
\newcommand{\Chat}{\widehat{\mathbb{C}}}
\DeclareMathOperator{\diam}{diam}
\title{Quasisymmetries of Finitely Ramified Julia Sets}
\author{James Belk}
\address{School of Mathematics and Statistics, University of Glasgow, Glasgow, Scotland}
\email{\href{mailto:jim.belk@glasgow.ac.uk}{jim.belk@glasgow.ac.uk}}
\thanks{The first author has been partially supported by EPSRC grant EP/R032866/1 during the creation of this paper, as well as the National Science Foundation under Grant No.\ DMS-1854367.}
\author{Bradley Forrest}
\address{Stockton University, 101 Vera King Farris Drive, Galloway, NJ, 08205, USA}
\email{\href{mailto:bradley.forrest@stockton.edu}{bradley.forrest@stockton.edu}}
\begin{document}

\begin{abstract}
We develop a theory of quasisymmetries for finitely ramified fractals, with applications to finitely ramified Julia sets.  We prove that certain finitely ramified fractals admit a naturally defined class of ``undistorted metrics'' that are all quasi-equivalent.  As a result, piecewise-defined homeomorphisms of such a fractal that locally preserve the cell structure are quasisymmetries. This immediately gives a solution to the quasisymmetric uniformization problem for topologically rigid fractals such as the Sierpi\'nski triangle.  We show that our theory applies to many finitely ramified Julia sets, and we prove that any connected Julia set for a hyperbolic unicritical polynomial has infinitely many quasisymmetries, generalizing a result of Lyubich and Merenkov.  We also prove that the quasisymmetry group of the Julia set for the rational function $1-z^{-2}$ is infinite, and we show that the quasisymmetry groups for the Julia sets of a broad class of polynomials contain Thompson's group~$F$.  \end{abstract}

\maketitle

\section{Introduction}

Classical quasiconformal geometry is concerned with quasiconformal maps between open subsets of $\R^n$ ($n\geq 2$), but the definition of a quasiconformal map does not generalize well to arbitrary metric spaces.  In~1980, Pekka Tukia and Jussi V\"{a}is\"{a}l\"{a} introduced a class of homeomorphisms between arbitrary metric spaces that they called \newword{quasisymmetries}~\cite{TukVai}. These maps are closely related to quasiconformal homeomorphisms on~$\R^n$, but the definition makes sense for arbitrary metric spaces, allowing for the study quasiconformal geometry in a very general setting.

Over the last two decades, the increasing importance of quasiconformal geometry in topology and analysis has ignited significant interest in quasisymmetries of fractals~\cite{Bonk}.  Such maps arise naturally in the study of quasi-isometries of Gromov hyperbolic spaces~\cite{BS}, and give a possible approach to Cannon's conjecture for Gromov hyperbolic groups~\cite{BK}. Quasisymmetries also arise in the study of postcritically finite rational maps in complex dynamics~\cite{Meyer}.

A metric space is \newword{quasisymmetrically rigid} if its group of quasisymmetries is finite.  In 2013, Mario Bonk and Sergei Merenkov proved the surprising result that the square Sierpi\'{n}ski carpet is quasisymetrically rigid~\cite{BoMe}. In 2016 Bonk, Merenkov, and Mikhail Lyubich proved the same result for certain Sierpi\'{n}ski carpet Julia sets~\cite{BLM}, a result which has since been generalized~\cite{QYZ}. In 2019, Lyubich and Merenkov gave the first example of a multiply connected Julia set with infinitely many quasisymmetries, namely the ``basilica'' Julia set for~$z^2-1$~\cite{LyMe}. Recently Russell Lodge, Lyubich, Merenkov, and Sabyasachi Mukherjee gave examples of Apollonian gasket Julia sets with infinitely many quasisymmetries~\cite{LLMM} and Timothy Alland showed that the Feigenbaum quadratic Julia set has infinitely many quasisymmetries \cite{Alland}.  

The most important geometric difference between a Sierpi\'{n}ski carpet and Julia sets such as the basilica or Apollonian gasket is that the basilica and Apollonian gasket are \newword{finitely ramified fractals}. Roughly speaking, this means that these Julia sets have a tree of subsets called ``cells'', where each cell intersects neighboring cells at finitely many points.  A precise definition of finitely ramified fractal was introduced by Alexander Teplyaev in 2008~\cite{Tep} as a common generalization of Kigami's postcritically finite self-similar sets~\cite{Kig1} and Strichartz's fractafolds~\cite{Str}.  Many other Julia sets are finitely ramified, as are certain well-known fractals such as the Sierpi\'{n}ski triangle and the Viscek fractal.

In this paper we develop a general theory of quasisymmetries for finitely ramified fractals.  Though Teplyaev's definition is entirely topological, we show that certain finitely ramified fractals have a distinguished quasisymmetric equivalence class of metrics that we call \newword{undistorted metrics}, and we give a complete characterization of quasisymmetries with respect to such metrics.  Among other consequences, this yields a solution to the quasisymmetric uniformization problem for ``topologically rigid'' finitely ramified fractals such as the Sierpi\'{n}ski triangle. We also define a group of homeomorphisms of a finitely ramified fractal that we call \newword{piecewise cellular}, and we show that such homeomorphisms are always quasisymmetries when the metric is undistorted.

Next we apply this theory to Julia sets for hyperbolic rational maps.  First, we give sufficient conditions for the Julia set to be finitely ramified, and we prove that under these conditions the restriction of the usual metric on the Riemann sphere is undistorted.  This means that piecewise cellular homeomorphisms of such Julia sets are quasisymmetries, and we illustrate this result by proving that the Julia set for the rational map $z^{-2}-1$ has infinitely many quasisymmetries. 
Finally we specialize to hyperbolic polynomials.  We prove that any connected Julia set for a hyperbolic polynomial is finitely ramified, and we investigate which polynomial Julia sets have infinitely many quasisyemmtries.  We have two results along these lines, the first of which is the following.

\begin{theorem}\label{thm:Theorem1}If $f$ is a unicritical polynomial whose critical point is periodic, then the Julia set for $f$ has infinitely many quasisymmetries.
\end{theorem}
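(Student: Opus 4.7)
The plan is to construct infinitely many piecewise cellular self-homeomorphisms of $J(f)$ and invoke the paper's main principle to conclude that each is a quasisymmetry. Since the critical orbit of $f$ is periodic, $f$ is hyperbolic and $J(f)$ is connected, so by the results announced in the introduction, $J(f)$ is finitely ramified and the restriction of the spherical metric to $J(f)$ is undistorted; thus producing piecewise cellular homeomorphisms is enough.

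Normalize $f(z) = z^d + c$ with critical point $0$, and let $U_0, \ldots, U_{p-1}$ denote the periodic cycle of bounded Fatou components with $0 \in U_0$. Böttcher's theorem applied at the superattracting cycle supplies a conformal isomorphism $\phi : U_0 \to \D$ with $\phi(0) = 0$ and $\phi \circ f^p \circ \phi^{-1}(z) = z^d$. Because $J(f)$ is locally connected, $U_0$ is a Jordan domain and $\phi$ extends to a homeomorphism of closures. Pulling back the rotation $z \mapsto e^{2\pi i/d} z$ produces an order-$d$ self-homeomorphism $\rho_0$ of $\overline{U_0}$ satisfying the key identity $f^p \circ \rho_0 = f^p$; the restriction of $\rho_0$ to $\partial U_0 \subset J(f)$ is a nontrivial cyclic symmetry of this Jordan curve.

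Next I would extend $\rho_0|_{\partial U_0}$ to a piecewise cellular homeomorphism $h_0$ of $J(f)$. The identity $f^p \circ \rho_0 = f^p$ says that $\rho_0$ permutes the points of $\partial U_0$ within fibers of $f^p$; the attachment points of other bounded Fatou components on $\partial U_0$ therefore fall into $\rho_0$-orbits. For two attachment points $p$ and $p' = \rho_0(p)$, a local inverse branch of $f^p$, which is defined and cellular near $p$ and $p'$ since $\partial U_0$ is disjoint from the critical orbit, identifies the subtree of $J(f)$ rooted at $p$ with the subtree rooted at $p'$. Gluing $\rho_0$ on $\partial U_0$ with these inverse-branch identifications on the attached subtrees, and extending by the identity outside a bounded-depth neighborhood of $\partial U_0$, yields the desired piecewise cellular homeomorphism $h_0$.

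To produce infinitely many quasisymmetries, I would pull this construction back through the dynamics. Every bounded Fatou component $V$ lies in the basin of the critical cycle, so some iterate $f^k$ sends $V$ onto one of the $U_j$; conjugating the appropriate rotation of $U_j$ through $f^k$ gives a local cyclic symmetry at $V$, and the same gluing procedure yields a piecewise cellular homeomorphism $h_V$ supported in a small neighborhood of $V$. A hyperbolic connected polynomial Julia set has infinitely many bounded Fatou components, and two $h_V$'s with disjoint support are manifestly distinct, so this produces infinitely many quasisymmetries. The main obstacle I anticipate is the gluing step: one must check that the piecewise-defined map is genuinely continuous across the attachment points, which reduces to careful use of the identity $f^p \circ \rho_0 = f^p$ together with the combinatorial structure of the subtrees of $J(f)$ rooted at points of $\partial U_0$.
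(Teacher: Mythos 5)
Your framework is the right one---the paper does reduce everything to producing piecewise cellular (in fact piecewise canonical) homeomorphisms of the finitely ramified, undistortedly-metrized $J_f$---but the construction of the homeomorphisms has an essential gap at the gluing step, and it is not the continuity issue you flag. The problem is that a ``local inverse branch of $f^p$'' does not identify the limb of $J_f$ attached to $\partial U_0$ at $q$ with the limb attached at $\rho_0(q)$: these limbs are global objects, and they contain iterated preimages of the critical point (hence critical points of $f^p$), so $f^p$ is not injective on a limb and no single-valued branch of $f^{-p}$ exists over its image. Concretely, for the basilica the two nontrivial limbs of $\partial U_0$ each contain a preimage component of $U_0$ on which $f^2$ has degree two. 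The one case where the extension does exist, $V=U_0$, your $h_0$ is forced (by uniqueness of B\"ottcher coordinates) to be the restriction of the global symmetry $z\mapsto e^{2\pi i/d}z$, which yields only $d$ quasisymmetries. For a non-critical component $V$ the situation is worse: each limb meets $\partial V$ in a single point, so a map that rotates $\partial V$ nontrivially must move every limb in its entirety; since at least one limb attached to $\partial V$ contains ``most of'' $J_f$ (everything on the far side of $V$ from the critical point), the map can be neither supported near $V$ nor the identity outside a bounded-depth neighborhood, and in general a nontrivial rotation of $\partial V$ extends to no homeomorphism of $J_f$ whatsoever, since the limbs it would have to interchange need not be homeomorphic. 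So the source of infinitude in your argument---infinitely many $h_V$'s with disjoint supports---collapses. (As a side remark, the claim that there are always infinitely many bounded Fatou components also fails when $c=0$, though that case is trivial.)

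The paper avoids this by rotating about a \emph{repelling} fixed point rather than about Fatou components. By Lemma~\ref{lem:ExistsRotationalFixedPoint} there is a fixed point $p\in J_f$ that is the landing point of a non-fixed cycle of $n\geq 2$ external rays; cutting along these rays decomposes $\C$ into sectors $S_1,\dots,S_n$ meeting $J_f$ only at the single cut point $p$, with all critical points in one sector. Here $f$ \emph{is} injective on each critical-point-free sector, so the pieces are related by canonical homeomorphisms and Lemma~\ref{lem:RotationAtFixedPoint} produces an order-$n$ piecewise canonical quasisymmetry $h$ permuting them. Infinitude then comes not from infinitely many commuting local moves but from ping-pong: $h$ together with the global rotation $k(z)=e^{2\pi i/d}z$ generates $\Z_n*\Z_d$ (Theorem~\ref{thm:UnicriticalTheorem}). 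If you want to rescue your approach, you should replace the Fatou-component rotations by rotations at repelling periodic cut points, where the pieces being permuted admit canonical identifications, and then argue infinitude via a free-product or ping-pong argument rather than via disjoint supports.
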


Here a polynomial is \newword{unicritical} if it has only one critical point, e.g.\ any quadratic polynomial. We prove Theorem~\ref{thm:Theorem1} by showing that the quasisymmetry group for a unicritical polynomial with periodic critical point contains $\Z_m*\Z_n$ for some $m,n\geq 2$. Our second result is that the quasisymmetry groups of the Julia sets for a certain family of hyperbolic polynomials contain Thompson's group~$F$.  This family includes the ``generic'' case where all critical points are simple and periodic with disjoint cycles.  In particular, we prove the following. 

\begin{theorem}\label{thm:Theorem2}
Let $f$ be a hyperbolic quadratic polynomial with connected Julia set~$J_f$.  Then the quasisymmetry group of $J_f$ contains Thompson's group~$F$.
\end{theorem}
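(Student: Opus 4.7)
The plan is to apply the paper's main theorem, that piecewise cellular homeomorphisms of a finitely ramified fractal with an undistorted metric are quasisymmetries, together with an explicit construction of Thompson's group $F$ as piecewise cellular self-homeomorphisms of $J_f$ arising from the degree-$2$ dynamics. Since the results stated above show that for any hyperbolic polynomial with connected Julia set the set $J_f$ is finitely ramified and the restriction of the spherical metric is undistorted, it suffices to realize $F$ inside the group of piecewise cellular homeomorphisms of $J_f$.

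First I would fix the cellular scaffolding. Using the cell decomposition constructed earlier in the paper for such Julia sets, I would organize the cells into a binary tree $T$: pick a root cell $C_\varnothing = J_f$ and decompose it as $C_\varnothing = C_0 \cup C_1$, two cells meeting in a finite ramification set, then inductively decompose each $C_w = C_{w0} \cup C_{w1}$ by pulling back along the two branches of $f$. Because $f$ has degree $2$ and $J_f$ is finitely ramified, this yields a legitimate self-similar binary cellular structure in which any finite composition of inverse branches of iterates of $f$ induces a cellular homeomorphism between cells of the same ``shape'' in $T$.

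Next I would implement $F$ as tree-pair rearrangements. An element of $F$ is represented by a pair $(T_-, T_+)$ of finite rooted binary subtrees of $T$ with the same number of leaves, ordered left to right. Map each leaf cell of $T_-$ onto the corresponding leaf cell of $T_+$ by the canonical cellular identification (a composition of a forward and an inverse branch of iterates of $f$). Gluing across leaves produces a candidate map $\varphi_{(T_-, T_+)} : J_f \to J_f$, which is piecewise cellular by construction and hence a quasisymmetry by the paper's main theorem as soon as it is a homeomorphism. Faithfulness of the resulting action is then straightforward, since distinct elements of $F$ already act distinctly on the Cantor boundary of addresses in $T$, which injects into $J_f$.

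The main obstacle is verifying that each $\varphi_{(T_-, T_+)}$ is a genuine homeomorphism, which requires that the cellular self-similarities agree on the ramification points where adjacent leaf cells meet. This is where connectedness of $J_f$ and the details of the critical orbit enter seriously: adjacent cells can, a priori, meet in different combinatorial patterns depending on their position relative to the postcritical set. The resolution I would pursue is to choose the root-level splitting $J_f = C_0 \cup C_1$ so that $C_0 \cap C_1$ is a carefully chosen finite set (for instance, the orbit of a repelling periodic point whose external angles separate the two halves), ensuring that every finer intersection $C_w \cap C_{w'}$ is an iterated $f$-preimage of this same set. The inverse-branch maps then automatically match on these intersections, so the pieces glue to a homeomorphism and the construction realizes $F$ as a subgroup of the quasisymmetry group of $J_f$.
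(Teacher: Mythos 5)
Your overall framework is the same as the paper's: show that $J_f$ is finitely ramified with an undistorted metric, and then realize $F$ by piecewise cellular (in fact piecewise canonical) homeomorphisms, which are quasisymmetries by the general theory. The gap is in the middle step, which is where all of the actual difficulty of the theorem lives. The ``self-similar binary cellular structure'' you posit, in which any two cells at corresponding positions of a tree pair are related by a canonical cellular identification, does not exist in general. First, the induced cell structure is not binary: for the basilica the root has four children, and more generally the number of children of a cell depends on how many new branch points appear inside it. Second, and more seriously, a canonical homeomorphism between two cells is a branch of $f^{-m}\circ f^{n}$, so it exists only when the two cells have a common forward image under iterates of $f$, i.e.\ only when they have the same ``type'' in the colored replacement system describing the cell structure. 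For an arbitrary tree pair $(T_-,T_+)$ the $i$th leaves of $T_-$ and $T_+$ sit at different addresses and typically have different types, so there is no canonical identification between them; the group you actually obtain from type-respecting tree pairs is a rearrangement group of a colored replacement system, and exhibiting a copy of $F$ inside such a group is precisely the nontrivial content of the theorem, not a formality. Third, even when the identifications exist, the cells of $J_f$ are arranged in circular patterns around Fatou components rather than in a linear order, so an order-preserving bijection of leaves need not glue to a homeomorphism. The paper's own remark about the failure of the analogous construction for local degree~$3$ (where one cannot produce even a single nontrivial element of $F_3$ because the limbs $L_{\{1/3\}}$ and $L_{\{5/9\}}$ need not be canonically homeomorphic) shows that the existence of the required canonical maps is genuinely delicate. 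Your proposed fix --- choosing the root cut so that all finer intersections are its iterated preimages --- addresses none of these three points.

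The paper's proof takes a different and more careful route. It first reduces, via McMullen's stable conjugacy, to a postcritically finite quadratic, for which the critical value is automatically a leaf of the Hubbard tree. It then passes to an iterate so that the critical point $c$ is fixed, and works on the boundary of the Fatou component $U\ni c$ via the Carath\'eodory loop $\gamma$ and the Douady--Hubbard projection $\pi_U$. The leaf condition forces all other critical points into the two limbs $L_{\{0\}}\cup L_{\{1/2\}}$, so $f$ maps $L_{\{t\}}$ homeomorphically onto $L_{\{2t\}}$ for all other angles $t$; this is the substitute for your unjustified self-similarity claim, and it is exactly what lets the standard action of $x_0,x_1$ on dyadic angles be implemented by canonical maps. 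The proof then writes down two explicit generators $g_0,g_1$ piece by piece, checks they glue (using the computation of the closures of the limbs), invokes Theorem~\ref{thm:PiecewiseCanonicalQuasisymmetries}, and finally verifies $\langle g_0,g_1\rangle\cong F$ with a bespoke ping-pong lemma rather than by a faithfulness argument on a Cantor set of addresses (which in any case does not inject into $J_f$, since $J_f$ is a non-injective quotient of that Cantor set).
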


The rest of the introduction is organized as follows. We begin in Section~\ref{subsec:OutlineTheory} with an outline of the main theory: including our definition of finitely ramified fractals (which is slightly different from Teplyaev's); a discussion of finitely ramified Julia sets; the definitions of undistorted metrics and  piecewise cellular homeomorphisms; and statements of our main theorems about quasisymmetries of finitely ramified fractals. In Section~\ref{subsec:Uniformization}, we apply this theory to the quasisymmetric uniformization of topologically rigid fractals such as the Sierpi\'nski triangle. Finally, in Section~\ref{subsec:IntroJuliaSets} we give a general procedure for constructing quasisymmetries of Julia sets of hyperbolic rational maps, we state our main theorems regarding polynomial Julia sets with infinitely many quasisymmetries, and we discuss the consequences for cubic polynomials.

\subsection{Outline of the theory}\label{subsec:OutlineTheory}

Here we outline the theory of quasisymmetries of finitely ramified fractals, including statements of our general results about quasisymmetries of such fractals.

\subsubsection*{Finitely ramified fractals}

The idea of a finitely ramified fractal first arose in a 1981 article by Gefen, Aharony, Mandelbrot, and Kirkpatrick on the physics of percolation ~\cite{GAMK}. They used the term ``finitely ramified'' to refer to fractals that are made up of many small pieces that are connected to the rest of the fractal at only finitely many points.  In~2008, Teplyaev formalized this notion by introducing finitely ramified cell structures~\cite[Definition~2.1]{Tep}. We will use the following definition, which is slightly different from Teplyaev's.

\begin{definition}
\label{def:finitelyramifiedcellstructure}
Given a compact, connected, metrizable space~$X$, a \newword{finitely ramified cell structure} on~$X$ consists of a locally finite, rooted tree of subsets of $X$ called \newword{cells}, satisfying the following conditions:
\begin{enumerate}
    \item Every cell is compact, connected, and has nonempty interior.\smallskip
    \item The root of the tree is the whole space $X$, and each cell is the union of its children.\smallskip
    \item The intersection of any two cells at the same level is a finite set.\smallskip
    \item For every descending path $X=E_0\supseteq E_1\supseteq E_2\supseteq \cdots$ in the tree, the intersection $\bigcap_{n\geq 1} E_n$ is a single point. 
\end{enumerate}
We will refer to the cells at level $n$ of the tree as \newword{$\boldsymbol{n}$-cells}.
\end{definition}

For example, Figure~\ref{fig:SierpinskiTriangleCellStructure} shows a finitely ramified cell structure on the Sierpi\'{n}ski triangle. A metrizable space $X$ together with a finitely ramified cell structure on $X$ is called a \newword{finitely ramified fractal.}
\begin{figure}[tb]
\centering
\includegraphics{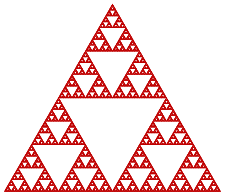}
\quad
\includegraphics{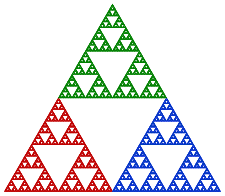}
\quad
\includegraphics{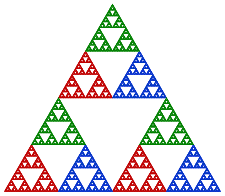}
\caption{The Sierpi\'{n}ski triangle has a natural finitely ramified cell structure with one $0$-cell, three $1$-cells, nine $2$-cells, and so forth.}
\label{fig:SierpinskiTriangleCellStructure}
\end{figure}

The main difference\footnote{There are a couple of structural differences as well.  First, we do not specify the boundary vertices of cells, since they play no role in our theory.  If explicit boundary vertices are required, it would suffice to define the boundary vertices of each cell to be the points in its topological boundary, or equivalently the set of points at which the cell intersects other cells of the same level.  Second, we have included the ``filtration'' of the cells as an explicit part of the cell structure (see~\cite[Remark~2.5]{Tep}).} between the above definition and Teplyaev's is that we do not require cells in $\mathcal{E}_n$ to be distinct from those in $\mathcal{E}_{n+1}$, since such a restriction would be inconvenient for defining cell structures on Julia sets. Despite this convention, it will be
helpful for notation to regard the collections $\mathcal{E}_n$ as disjoint. Thus our “cells”
will really be ordered pairs $(E,n)$, where $E\subseteq X$ and $n\in \mathbb{N}$ determines the level of the cell.

The notion of a finitely ramified fractal is similar to, but more general than, the notion of a postcritically finite (p.c.f.) fractal introduced by Kigami~\cite{Kig1}. See \mbox{\cite[Example~8.9]{Tep}} for an example of a fractal which is finitely ramified but postcritically infinite.

\subsubsection*{Finitely ramified Julia sets}

Recall that every rational map $f\colon \Chat\to\Chat$ has an associated \newword{Julia set}~$J_f$, which can be defined as the closure of the set of repelling periodic points of~$f$.  This set is fully invariant under~$f$, i.e.~$f^{-1}(J_f)=f(J_f)=J_f$.  We are interested in conditions under which $J_f$ has the structure of a finitely ramified fractal.
 
 If $J_f$ is connected, define a \newword{branch cut} for $J_f$ to be any closed subset $S\subseteq J_f$ for which $f$ is injective on each component of $J_f\setminus f^{-1}(S)$.  (Note then that each component of $J_f\setminus S$ is the domain for a single-valued branch of~$f^{-1}$.)  Such a branch cut is \newword{finite} if $S$ is a finite set, and \newword{invariant} if $f(S)\subseteq S$.  The following basic theorem is proven in Section~\ref{subsec:FiniteInvariantBranchCuts}.

\newcommand{\textFinitelyRamifiedJulia}{Let $f\colon \Chat\to \Chat$ be a hyperbolic rational map with connected Julia set~$J_f$.  If $J_f$ has a finite invariant branch cut~$S$, then $J_f$ admits a finitely ramified cell structure whose $n$-cells (for $n\geq 1$) are the closures of the connected components of $J_f\setminus f^{-n}(S)$.}
\begin{theorem}\label{thm:FinitelyRamifiedJulia}
\textFinitelyRamifiedJulia
\end{theorem}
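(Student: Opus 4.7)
The plan is to take the tree specified in the statement: the root $0$-cell is $J_f$ itself, and for each $n\geq 1$ the $n$-cells are the closures in $J_f$ of the components of $J_f\setminus f^{-n}(S)$. Invariance of $S$ yields $S\subseteq f^{-1}(S)\subseteq f^{-2}(S)\subseteq\cdots$, so every component of $J_f\setminus f^{-(n+1)}(S)$ lies in a unique component of $J_f\setminus f^{-n}(S)$, and this containment serves as the parent-child relation. Each cell is compact (closed in $J_f$) and connected (closure of a connected set). Since $J_f$ is locally connected (a classical result for connected hyperbolic Julia sets), the components of $J_f\setminus f^{-n}(S)$ are open there, so each cell has nonempty interior in $J_f$. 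Condition~(3) is then immediate: two distinct $n$-cells $\overline{C_1}$ and $\overline{C_2}$ can meet only inside the finite set $f^{-n}(S)$, since any common point outside this set would belong to both components, forcing $C_1=C_2$.

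For condition~(2), removing the finitely many points of $f^{-(n+1)}(S)\cap C$ from a component $C$ of $J_f\setminus f^{-n}(S)$ breaks $C$ into open pieces whose closures are exactly the children of $\overline{C}$, and those closures together recover $\overline{C}$. Local finiteness of the tree follows from the branch-cut hypothesis combined with hyperbolicity. Since the postcritical set is disjoint from $J_f$, the map $f$ restricts to a degree-$d$ covering in a neighborhood of $J_f$; together with the branch-cut property, I would check that $f$ sends each component of $J_f\setminus f^{-1}(S)$ homeomorphically onto a component of $J_f\setminus S$. Iterating, $f^n$ maps each component of $J_f\setminus f^{-n}(S)$ homeomorphically onto a component of $J_f\setminus S$, and each component of $J_f\setminus S$ has exactly $d^n$ preimage components under this procedure. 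Combined with the fact that $J_f\setminus S$ has only finitely many components (a standard consequence of local connectedness of $J_f$ together with finite local valence at each point of $S$, which holds for hyperbolic Julia sets), this yields finitely many $n$-cells at every level and hence local finiteness.

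The main obstacle is condition~(4), which I would establish using the expanding property of hyperbolic rational maps. There exist a neighborhood $U$ of $J_f$ in $\Chat$, a conformal metric $\rho$ on $U$ (for instance, the orbifold metric), and constants $\lambda>1$ and $n_0\geq 1$ such that $f^{n_0}$ expands $\rho$-distances on $U$ by a factor of at least $\lambda$. Equivalently, the Koebe distortion theorem applied to branches of $f^{-n}$ defined on fixed-sized round disks in $\Chat$ avoiding the postcritical set shows that each such branch contracts diameters by a factor tending to zero uniformly as $n\to\infty$. Since each $n$-cell is recovered by applying a branch of $f^{-n}$ to the closure of one of the finitely many components of $J_f\setminus S$, the diameters of the $n$-cells tend to zero uniformly in $n$. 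Any descending chain $E_0\supseteq E_1\supseteq\cdots$ of nonempty compact sets with $\mathrm{diam}(E_n)\to 0$ has intersection equal to a single point by compactness, which verifies~(4) and completes the proof.
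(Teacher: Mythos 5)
Your setup of the tree, the verification of conditions (1)--(3), and the overall strategy for condition (4) (use expansion/contraction of inverse branches) match the paper's. But there are two places where you assert more than you have proved. First, the claim that $J_f\setminus S$ has finitely many components is not a routine consequence of local connectedness: removing a point from a locally connected continuum can in general leave infinitely many components, and the ``finite local valence'' you invoke at points of $S$ is precisely the nontrivial content. The paper devotes Appendix~\ref{ap:RemovingFiniteSets} to this (Theorem~\ref{thm:FinitelyManyComponents}), using Sullivan's finiteness of periodic Fatou components together with results of Pommerenke and Petersen on periodic accesses; it is essential there that the points of $S$ are periodic or preperiodic, which you should at least note follows from $S$ being finite with $f(S)\subseteq S$.

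Second, and more seriously, your derivation of condition (4) has a gap at its key step. The Koebe/expansion argument gives uniform shrinking of inverse branches $f^{-n}$ only on connected sets of some fixed small diameter $\delta$ (equivalently, on round disks avoiding the postcritical set). An $n$-cell, however, is the image under a branch of $f^{-n}$ of the closure of a component of $J_f\setminus S$, which can be large --- there is no single disk avoiding $P_f$ containing it on which that branch is defined, so ``each branch contracts diameters'' does not directly apply to the whole cell. To bridge this you must cover $J_f$ by finitely many small connected sets, pull them back, and bound \emph{uniformly in $n$} the number of pulled-back pieces needed to connect two points of an $n$-cell; the paper gets this bound ($\leq 2m+1$) from the fact that $f^n$ is injective on the interior of each $n$-cell, so at most one preimage component of each covering piece lies in the cell. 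Without such a chain-length bound, the diameter estimate for a cell is (number of pieces) $\times$ (diameter of a piece), and the first factor could a priori grow with $n$, defeating the conclusion. This chain argument is the real content of the paper's proof of condition (4) and is missing from your sketch.
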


We will refer to the finitely ramified cell structure described by this theorem as the \newword{induced finitely ramified cell structure} on~$J_f$.

In the statement of Theorem~\ref{thm:FinitelyRamifiedJulia}, a rational map $f$ is \newword{hyperbolic} if it is expanding on a neighborhood of $J_f$ with respect to some conformal metric (see \mbox{\cite[\S 19]{Milnor}}).  Equivalently, $f$ is hyperbolic if the forward orbit of every critical point converges to an attracting cycle \mbox{\cite[Theorem~19.1]{Milnor}}. Julia sets for hyperbolic rational maps behave much better than arbitrary Julia sets, e.g.~if $f$ is hyperbolic and $J_f$ is connected, then $J_f$ is also locally connected \mbox{\cite[Theorem~19.2]{Milnor}} and the restriction $f\colon J_f\to J_f$ is a covering map.   Hyperbolic maps are conjectured to form a dense open set in the parameter space of all rational maps \mbox{\cite[Conjecture~HD]{McMullen2}}, and in particular a quadratic polynomial $f(z)=z^2+c$ is conjectured to be hyperbolic if and only if $c$ does not lie on the boundary of the Mandelbrot set \mbox{\cite[Conjecture~HD2$'$]{McMullen2}}.

\begin{figure}
    \centering
    \includegraphics{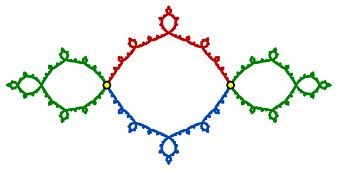}\quad \includegraphics{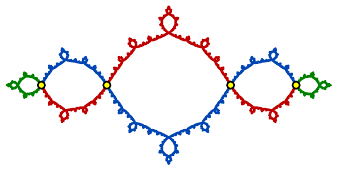} \\[-16pt]
    \includegraphics{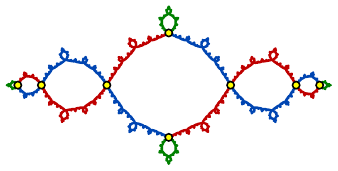}
    \caption{A finitely ramified cell structure for the basilica Julia set with four $1$-cells, eight $2$-cells, and sixteen $3$-cells.}
    \label{fig:BasilicaCellStructure}
\end{figure}
\begin{example}[The basilica]
\label{ex:Basilica}The Julia set for the polynomial $f(z)=z^2-1$ is known as the \newword{basilica}.  This polynomial has two fixed points, one of which is $p=\bigl(1-\sqrt{5}\hspace{0.08333em}\bigr)\bigr/2 \approx -0.618$. The one-point set $S=\{p\}$ is a finite invariant branch cut for~$J_f$.  By Theorem~\ref{thm:FinitelyRamifiedJulia}, there is a corresponding finitely ramified cell structure on $J_f$ whose $n$-cells are the closures of the components of~$J_f\setminus f^{-n}(p)$.  The $1$-cells, $2$-cells, and $3$-cells for this cell structure are shown in Figure~\ref{fig:BasilicaCellStructure}. 
\end{example}

It is not hard to show that a connected Julia set for a hyperbolic polynomial always admits a finite invariant branch cut (see Proposition~\ref{prop:PolynomialJuliaSetsFinitelyRamified}), and hence connected Julia sets for hyperbolic polynomials are always finitely ramified. 
 However, Theorem~\ref{thm:FinitelyRamifiedJulia} also defines finitely ramified cell structures for the Julia sets of certain non-polynomial rational maps.

\begin{figure}
    \centering
    \includegraphics{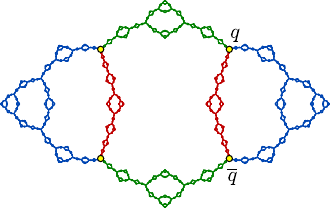}\quad \includegraphics{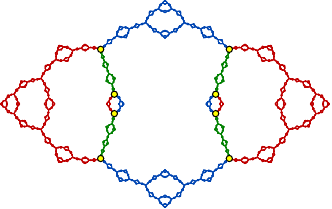}
    \caption{A finitely ramified cell structure for the bubble bath Julia set with six $1$-cells and twelve $2$-cells.}
    \label{fig:BubbleBathCellStructure}
\end{figure}
\begin{example}[The bubble bath]\label{ex:BubbleBath}The rational map $f(z)= 1-z^{-2}$ is hyperbolic and has a connected Julia set~$J_f$, which we refer to as the \newword{bubble bath} (see~\cite{Wei}).  This map has a real fixed point $p\approx -0.7549$ and two complex conjugate fixed points
 \[
 q\approx 0.8774+0.7449i\qquad\text{and}\qquad \overline{q}\approx 0.8774-0.7449i,
 \]
 and we prove in Section~\ref{subsec:bubblebath} that the set $S=\{q,\overline{q}\}$ is a branch cut for~$J_f$. By Theorem~\ref{thm:FinitelyRamifiedJulia}, there is a corresponding finitely ramified cell structure on $J_f$ whose $n$-cells are the closures of the components of~$J_f\setminus f^{-n}(p)$.  The $1$-cells and  $2$-cells for this cell structure are shown in Figure~\ref{fig:BubbleBathCellStructure}.
 \end{example}

\begin{remark}
If $S$ is a finite invariant branch cut for $J_f$, then the closures of the components of $J_f\setminus S$ (i.e.~the $1$-cells) form a Markov partition of~$J_f$ in the sense of Ruelle~\cite{Ruelle}. From this point of view, Theorem~\ref{thm:FinitelyRamifiedJulia} is a version of the familiar fact that a Markov partition for an expanding map has a well-defined coding by a subshift of finite type. In the case of quadratic polynomials, a similar strategy is used to define the well-known Yoccoz puzzle~\cite{HubbardYoccoz}.
\end{remark}

\subsubsection*{Quasisymmetries}
A homeomorphism $f\colon X\to Y$ between metric spaces is called a \newword{quasisymmetry} if there exists a homeomorphism $\eta\colon [0,\infty)\to [0,\infty)$ such that
\[
\frac{d_{\scriptscriptstyle Y}\bigl(f(a),f(b)\bigr)}{d_{\scriptscriptstyle Y}\bigl(f(a),f(c)\bigr)} \leq \eta\biggl(\frac{d_{\scriptscriptstyle X}(a,b)}{d_{\scriptscriptstyle X}(a,c)}\biggr)
\]
for every triple $a,b,c$ of distinct points in~$X$. A homeomorphism $f$ which is quasisymmetric with respect to a given  $\eta$ is called an \newword{$\boldsymbol{\eta}$-quasisymmetry.}

The quasisymmetry condition given above first appeared in the work of Beurling and Ahlfors~\cite{BeAh}, who proved that the quasisymmetries of a circle are precisely the restrictions of the quasiconformal homeomorphisms of a closed disk to its boundary.  The term ``quasisymmetry'' was later coined by Kelingos~\cite{Kel}, and the modern definition of a quasisymmetric homeomorphism between metric spaces was given by Tukia and V\"{a}is\"{a}l\"{a} in 1980~\cite{TukVai}.  V\"{a}is\"{a}l\"{a} established a strong general relationship between quasiconformal homeomorphisms and quasisymmetries with his ``egg yolk principle''~\cite{Vai1}, which implies that every quasiconformal homeomorphism $h\colon U\to V$ between domains in~$\mathbb{R}^n$ restricts to a quasisymmetry on each compact subset of~$U$.   See \cite{Hei} for a general introduction to quasisymmetries.

Compositions and inverses of quasisymmetries are again quasisymmetries, and therefore the self-quasisymmetries of any metric space $X$ form a group $QS(X)$.  Two metric spaces $X$ and $Y$ are said to be \newword{quasisymmetrically equivalent} if there exists a quasisymmetry $X\to Y$, in which case $QS(X)\cong QS(Y)$.  Similarly, two continuous metrics $d$ and $d'$ on a topological space $X$ are \newword{quasisymmetrically equivalent} if the identity map $(X,d)\to (X,d')$ is a quasisymmetry, in which case~$QS(X,d) = QS(X,d')$.

\subsubsection*{Undistorted metrics}
Our main general result about finitely ramified fractals is to prove that certain finitely ramified fractals admit a natural family of quasisymmetrically equivalent metrics. 

\begin{definition}
\label{def:undistorted}
If $X$ is a finitely ramified fractal, we say that a metric $d$ on $X$ is \newword{undistorted} if it satisfies the following conditions:
\begin{enumerate}
    \item \textbf{Exponential Decay:} There exist constants $0< r \leq R < 1$ and $C\geq 1$ so that
    \[
    \frac{r^{n-m}}{C} \leq \frac{\diam(E')}{\diam(E)} \leq C R^{n-m}
    \]
    for every $m$-cell $E$ and $n$-cell $E'$ with $n\geq m$ such that $E\cap E'\ne\emptyset$. \smallskip
    \item \textbf{Cell Separation:} There exists a constant $\delta>0$ so that
    \[
    d(E_1,E_2) \geq \delta \diam(E_1)
    \]
    for every pair $E_1,E_2$ of disjoint $n$-cells.
\end{enumerate}
\end{definition}

If we wish to specify the constants $r$, $R$, $C$, and $\delta$, then we will say that the metric $d$ is \newword{$\boldsymbol{(r,R,C,\delta)}$-undistorted}.  For example, it is easy to check that the restriction of the Euclidean metric to the Sierpi\'nski triangle is $\bigl(1/2, 1/2, 1, \sqrt{3}/2\bigr)$-undistorted with respect to the cell structure shown in Figure~\ref{fig:SierpinskiTriangleCellStructure}.

Note that the cell separation condition could be written
\[
d(E_1,E_2)\geq \delta \max\{\diam(E_1),\diam(E_2)\}.
\]
This condition is very similar to the ``uniform relative separation'' condition  introduced by Bonk for the uniformization of plane Sierpi\'nski carpets~\cite{Bonk2011}, though we compare only cells of the same level and we use a max instead of a min.

\newcommand{\textCompaibleMetric}{Let $X$ be a finitely ramified fractal. Then:
\begin{enumerate}
\item Any two undistorted metrics on $X$ are quasisymmetrically equivalent.\smallskip
\item If a metric $d$ on $X$ is undistorted, then so is any metric which is quasisymmetrically equivalent to~$d$.
\end{enumerate}
}%
\begin{theorem}\label{thm:CompatibleMetric}\textCompaibleMetric
\end{theorem}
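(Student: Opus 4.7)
The plan rests on a key estimate: in any undistorted metric $d$ on $X$, the distance $d(a,b)$ between distinct points is comparable (up to multiplicative constants depending only on $r,R,C,\delta$) to the diameter of the deepest cell containing both. Let $n(a,b)$ denote the largest $n$ for which some $n$-cell contains both $a$ and $b$ (this is finite by condition~(4) of Definition~\ref{def:finitelyramifiedcellstructure}, applied via a König's lemma argument), and let $D_n^d(x)$ denote the maximum $d$-diameter of an $n$-cell through $x$. The upper bound $d(a,b)\leq D_{n(a,b)}^d(a)$ is immediate; for the lower bound, $a$ and $b$ lie in disjoint $(n(a,b)+1)$-cells, so cell separation combined with one step of exponential decay yields $d(a,b)\geq (\delta r/C)\,D_{n(a,b)}^d(a)$.

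For part~(1), given two undistorted metrics $d,d'$ and a triple of distinct points $a,b,c$, set $m=n(a,b)$ and $k=n(a,c)$. The key estimate reduces each ratio $d(a,b)/d(a,c)$ and $d'(a,b)/d'(a,c)$ to the combinatorial ratio $D_m^{\ast}(a)/D_k^{\ast}(a)$, and the exponential decay condition sandwiches each such ratio in the power-type window $[C^{-1}r^{m-k},\,CR^{m-k}]$ when $m\geq k$ (with the reciprocal window when $m\leq k$). Inverting the $d$-bounds to express $|m-k|$ in terms of $t:=d(a,b)/d(a,c)$, and substituting into the corresponding $d'$-bounds in the two regimes $t\leq 1$ and $t\geq 1$, yields $d'(a,b)/d'(a,c)\leq K\max(t^{\alpha},t^{\beta})$ for explicit positive exponents $\alpha=\log R'/\log R$ and $\beta=\log r'/\log R$. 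This power-type bound furnishes a valid control function $\eta$, witnessing quasisymmetric equivalence.

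For part~(2), suppose $d'$ is $\eta$-quasisymmetrically equivalent to $d$. Cell separation for $d'$ is straightforward: for disjoint $n$-cells $E_1,E_2$ with $a_0\in E_1, b_0\in E_2$ realizing $d'(E_1,E_2)$, we have $d(a_0,b_0)/d(a_0,c)\geq\delta$ for every $c\in E_1$, so quasisymmetry gives $d'(a_0,c)\leq\eta(1/\delta)\,d'(a_0,b_0)$, and the triangle inequality yields $\diam_{d'}(E_1)\leq 2\eta(1/\delta)\,d'(E_1,E_2)$.

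The main obstacle is proving the exponential decay condition for $d'$, since a general $\eta$ does not convert the geometric rate of $d$ into a geometric rate for $d'$ in a single step. I resolve this by iterating a one-step quasisymmetry argument. For nested cells $F_k\supseteq F_{k+L}$, choosing $a\in F_{k+L}$ along with $b\in F_{k+L}$ satisfying $d(a,b)\geq\diam_d(F_{k+L})/2$ and $c\in F_k$ satisfying $d(a,c)\geq\diam_d(F_k)/2$, quasisymmetry gives $\diam_{d'}(F_{k+L})/\diam_{d'}(F_k)\leq 2\eta(2CR^{L})$; choosing $L_0$ large enough that $2\eta(2CR^{L_0})\leq 1/2$ and iterating in blocks of length $L_0$ yields the geometric upper bound with rate $2^{-1/L_0}$. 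For the lower bound, applying the symmetric argument to the inverse quasisymmetry $(X,d')\to(X,d)$, which is $\eta^{\ast}$-quasisymmetric with $\eta^{\ast}(t)=1/\eta^{-1}(1/t)$, produces a positive single-step lower bound $\diam_{d'}(F_{k+1})/\diam_{d'}(F_k)\geq 1/(2\eta(2C/r))$ that iterates directly to a geometric lower bound. Finally, non-nested intersecting cells $E\cap E'\neq\emptyset$ reduce to the nested case by routing through the $m$-level ancestor of $E'$, which intersects $E$ and therefore has $d'$-diameter comparable to that of $E$ by the same-level ($L=0$) version of the one-step argument.
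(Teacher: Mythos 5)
Your part (2) is essentially the paper's own argument: one-step diameter comparisons coming from the quasisymmetry (what the paper packages as Lemma~\ref{lem:DiameterRatioGeneral}), iterated and chained to recover full exponential decay (what the paper packages as Lemma~\ref{lem:EquivalentExponentialDecay}), and your cell-separation verification is correct. (Two small points there: to bound $\diam_{d'}(F_{k+L})$ you must let $b$ range over all of $F_{k+L}$ and use the triangle inequality through a basepoint, since the pair realizing the $d$-diameter only controls $d'(a,b)$ --- this is what your factor $2$ reflects --- and the single-step lower bound follows from the forward quasisymmetry applied to $S=F_k$, $T=F_{k+1}$, no inverse needed.) The genuine problem is in part (1): your ``key estimate'' is false as stated. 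You take $n(a,b)$ to be the largest $n$ for which a \emph{single} $n$-cell contains both $a$ and $b$, and then assert that $a$ and $b$ lie in disjoint $(n(a,b)+1)$-cells. Maximality only rules out one $(n(a,b)+1)$-cell containing both points; it does not prevent $a$ and $b$ from lying in two distinct but \emph{intersecting} $(n(a,b)+1)$-cells, and then cell separation gives nothing. Concretely, take $X=[0,1]$ with the dyadic cell structure and the Euclidean metric, which is undistorted (with $r=R=1/2$, $C=1$, $\delta=1$); for $a=\tfrac12-\epsilon$, $b=\tfrac12+\epsilon$ the only cell containing both is the $0$-cell, so $n(a,b)=0$ and $D^d_{n(a,b)}(a)=1$, while $d(a,b)=2\epsilon$ is arbitrarily small. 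Thus the lower bound $d(a,b)\geq(\delta r/C)\,D^d_{n(a,b)}(a)$ fails, and with it the reduction of $d(a,b)/d(a,c)$ to the combinatorial ratio $D^{\ast}_m(a)/D^{\ast}_k(a)$ on which all of your part (1) rests.

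The repair is to replace ``deepest single cell containing both points'' by the deepest level at which $a$ and $b$ lie in an \emph{intersecting pair} of cells $E_a\ni a$, $E_b\ni b$ --- the paper's covering pairs. With that definition the upper bound becomes $d(a,b)\leq\diam(E_a)+\diam(E_b)\leq(1+C)\diam(E_a)$, and maximality now really does force the children of $E_a$ and $E_b$ containing $a$ and $b$ to be disjoint, so cell separation plus one step of exponential decay gives $d(a,b)\geq(\delta r/C)\diam(E_a)$. After this fix, your two-regime power-type computation goes through essentially as in the paper (one compares the diameters of the two cells through $a$ coming from the two covering pairs, which intersect because both contain $a$, so exponential decay applies to them directly). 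As written, however, part (1) does not work.
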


We prove this theorem in Section~\ref{sec:CompatibleMetricProof}.  Not every finitely ramified fractal $X$ admits an undistorted metric (see Theorem~\ref{thm:AdmitsUndistortedMetric} below), but if such a metric exists then Theorem~\ref{thm:CompatibleMetric} tells us that it is unique up to quasisymmetric equivalence.   In particular, the quasisymmetry group $QS(X)$ with respect to an undistorted metric does not depend on the choice of undistorted metric, but instead depends only on the topology of $X$ together with the cell structure.  It would be interesting to find an explicit topological characterization of quasisymmetries in this case.

The following example illustrates undistorted metrics and Theorem~\ref{thm:CompatibleMetric}.

\begin{figure}
\centering
$\raisebox{-0.48\height}{\includegraphics{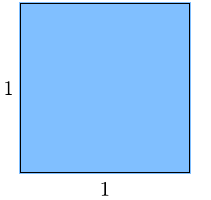}}\hfill%
\raisebox{-0.48\height}{\includegraphics{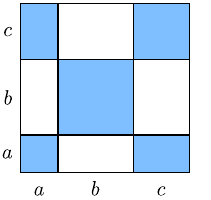}}\hfill%
\raisebox{-0.48\height}{\includegraphics{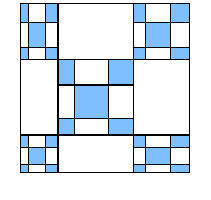}}$
\caption{The first three steps in the construction of the self-affine fractal $V(a,b,c)$.}
\label{fig:VicsekSteps}
\end{figure}%
\begin{figure}
\centering
$\underset{\textstyle V(1/3,1/3,1/3)\rule{0pt}{12pt}}{\includegraphics{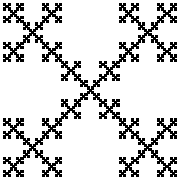}}\qquad\qquad%
\underset{\textstyle V(1/4,1/2,1/4)\rule{0pt}{12pt}}{\includegraphics{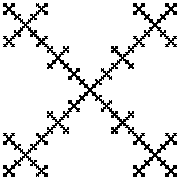}}\qquad\qquad%
\underset{\textstyle V(1/2,1/4,1/4)\rule{0pt}{12pt}}{\includegraphics{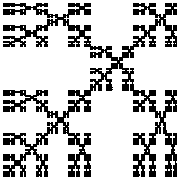}}$
\caption{Three fractals in the $V(a,b,c)$ family, the first of which is the famous Vicsek fractal. The Euclidean metric is undistorted for the first two, but distorted for the third.}
\label{fig:VicsekExamples}
\end{figure}%
\begin{example}[The Vicsek Family]\label{ex:Vicsek}Given real numbers $a,b,c\in(0,1)$ with $a+b+c=1$, let $V(a,b,c)$ be the self-affine fractal constructed using the procedure shown in Figure~\ref{fig:VicsekSteps}, where we iteratively replace each rectangle with five subrectangles whose side lengths have the given ratios.  Figure~\ref{fig:VicsekExamples} shows three examples of the resulting fractal $V(a,b,c)$ for different values of the parameters, with the $(a,b,c)=(1/3,1/3,1/3)$ case being the famous Vicsek fractal.

Each of the fractals $V(a,b,c)$ has a natural finitely ramified cell structure with exactly $5^n$ different $n$-cells for each $n\geq 0$.  In the case where $a=c$, it is easy to check that the Euclidean metric restricts to an undistorted metric on $V(a,b,c)$, so all such fractals are quasisymmetrically equivalent, even though the obvious homeomorphisms between these fractals are not bilipschitz. For $a\ne c$, the restriction of the Euclidean metric to $V(a,b,c)$ is distorted (i.e.~not undistorted), since for every $n\geq 0$ there exist a pair of intersecting $n$-cells with diameters $\sqrt{2}\,ac^{n-1}$ and $\sqrt{2}\,ba^{n-1}$, respectively. 
\end{example}

The following theorem (proven in Section~\ref{subsec:ProofThm1-2}) characterizes the finitely ramified fractals that admit undistorted metrics.

\newcommand{\textAdmitsUndistortedMetric}{Let $X$ be a finitely ramified fractal.  Then $X$ admits an undistorted metric if and only if there exists a constant $k\in\N$ that satisfies the following conditions:
\begin{enumerate}
    \item Every $n$-cell in $X$ contains at least two disjoint $(n+k)$-cells.\smallskip
    \item No two disjoint $n$-cells in $X$ intersect a common $(n+k)$-cell.
\end{enumerate}
}%
\begin{theorem}\label{thm:AdmitsUndistortedMetric}\textAdmitsUndistortedMetric
\end{theorem}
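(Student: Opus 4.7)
The plan is to handle the two directions separately, with the forward direction being straightforward bookkeeping and the backward direction requiring an explicit construction.

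\textbf{Forward direction.} Given an $(r,R,C,\delta)$-undistorted metric, choose $k$ large enough that both $2CR^k<1$ and $CR^k<\delta$. For (1), given an $n$-cell $E$, pick $x,y$ in the interior of $E$ with $d(x,y)$ arbitrarily close to $\diam(E)$; each lies in a unique $(n+k)$-descendant $F_1, F_2$ of $E$. If $F_1\cap F_2\ne\emptyset$ then $d(x,y)\leq\diam(F_1)+\diam(F_2)\leq 2CR^k\diam(E)<\diam(E)$, a contradiction, so $F_1,F_2$ are disjoint. For (2), a shared $(n+k)$-cell $F$ of disjoint $n$-cells $E_1,E_2$ yields points $a\in F\cap E_1$, $b\in F\cap E_2$ with $\delta\diam(E_1)\leq d(a,b)\leq\diam(F)\leq CR^k\diam(E_1)$, contradicting $CR^k<\delta$.

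\textbf{Backward direction.} Assume (1), (2) hold with some $k$. First I reduce to the case $k=1$ by passing to the thinned cell structure whose $n$-cells are the original $kn$-cells: this satisfies (1) and (2) with $k=1$, and any undistorted metric for the thinned structure is undistorted for the original, since cells at intermediate original levels are sandwiched between their nearest thinned ancestor and descendants. With $k=1$ in force, fix $\alpha\in(0,1)$ small and define
\[
d(x,y) \;=\; \inf \sum_{i=1}^m \alpha^{\ell_i},
\]
the infimum over finite chains $x=p_0,p_1,\ldots,p_m=y$, where $\ell_i$ is the largest level of a cell containing both $p_{i-1}$ and $p_i$. Symmetry and the triangle inequality follow by concatenation, so $d$ is a pseudometric; positivity on distinct points will follow from the separation estimate below. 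The diameter upper bound $\diam(E)\leq\alpha^n$ for an $n$-cell $E$ comes from the single-step chain, while the lower bound $\diam(E)\geq c\alpha^{n+1}$ uses (1) to exhibit disjoint $(n+1)$-descendants of $E$ and applies the separation estimate one level down. Together these give the exponential decay required of an undistorted metric.

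\textbf{Main obstacle.} The crux is the cell-separation estimate: for disjoint $n$-cells $E_1,E_2$ and $x\in E_1, y\in E_2$, every chain must have cost at least $c\alpha^n$. If some edge has level $\leq n$, its weight $\alpha^{\ell_i}\ge\alpha^n$ already suffices. Otherwise every edge lies at level $\geq n+1$, so each edge is contained in an $(n+1)$-cell $A_i$, and for generic endpoints $A_1\subseteq E_1$ and $A_m\subseteq E_2$, with consecutive $A_i, A_{i+1}$ sharing the intermediate point $p_i$. Condition (2) immediately rules out $m=1$ (a single $(n+1)$-cell cannot intersect both disjoint $E_1,E_2$), and a case analysis on the $n$-ancestors of the intermediate $A_i$'s rules out $m=2,3$ by forcing some $A_i$ to intersect both $E_1$ and $E_2$ through the boundary points $p_{i-1},p_i$, violating (2). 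The main difficulty is extending this argument to chains of arbitrary length: one must show that longer chains transiting through several intermediate $n$-cells either trigger the same contradiction or accumulate enough weight from multiple transits to exceed $c\alpha^n$. An alternative framing, via the combinatorial metric $d(x,y)=\sup_n \alpha^n(N_n(x,y)-1)$ where $N_n(x,y)$ is the minimum number of $n$-cells in a chain from $x$ to $y$, handles separation cleanly—since $N_n(x,y)\geq 2$ for $x\in E_1, y\in E_2$—but shifts the difficulty to bounding $N_m(x,y)$ for $x,y$ in a common $n$-cell, which requires deriving uniform branching bounds from (1) and (2). Either route places the weight of the proof on a careful exploitation of condition (2) to control how chains traverse the cell structure between disjoint cells.
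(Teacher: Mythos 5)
Your forward direction is correct and essentially identical to the paper's (the paper chooses $k$ with $CR^k<\min(\delta,1/4)$ and argues exactly as you do). The backward direction, however, has a genuine gap, and it sits exactly where you say it does: the cell-separation estimate for chains of arbitrary length. Handling $m\leq 3$ and gesturing at ``accumulating enough weight from multiple transits'' is not a proof, and the alternative $\sup_n\alpha^n(N_n(x,y)-1)$ framing, as you concede, just relocates the difficulty to a uniform branching bound that does not obviously follow from conditions (1) and (2). Since positivity of your pseudometric, the lower diameter bound, and the cell separation condition all rest on this estimate, the construction is incomplete as it stands.

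The paper closes this gap with a specific mechanism you have not found. It defines $d_\alpha(p,q)=\inf\sum_i\alpha^{-|E_i|}$ over chains of \emph{cells} (at arbitrary, mixed levels) and proves $d_\alpha(p,q)\geq\alpha^{1-P(p,q)-k}$ by induction on chain length: pick a cell $E_j$ of maximal level $M$ in the chain; if its neighbors have level $\leq M-k$, condition (2) forces those neighbors to intersect, so $E_j$ can be deleted and the chain respliced; otherwise the neighbors are first replaced by ancestors at level $M-k$, and the weight $\alpha^{-M}$ of the deleted cell must absorb the resulting increase $2(\alpha^{k-M}-\alpha^{-M})$. This works precisely when $3-2\alpha^k\geq 0$, i.e.\ when the base of the exponential weights is \emph{close to} $1$ (in the paper's convention $1<\alpha\leq(3/2)^{1/k}$; in your convention $\alpha\geq(2/3)^{1/k}$). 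Your instruction to ``fix $\alpha\in(0,1)$ small'' points in the opposite direction: with $\alpha$ small the deleted high-level cell is too cheap to compensate for promoting its neighbors, and the induction fails. Your reduction to $k=1$ via the thinned structure is a reasonable simplification (though the claim that undistortedness transfers back to intermediate levels deserves a few lines, particularly for cell separation), but it does not touch the core difficulty.
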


The two conditions in Theorem \ref{thm:AdmitsUndistortedMetric} are combinatorial versions of exponential decay and cell separation respectively. Indeed, both exponential decay and cell separation are uniform conditions and Theorem \ref{thm:AdmitsUndistortedMetric} also enforces this uniformity.  While we will focus our attention on finitely ramified fractals with undistorted metrics, Example \ref{ex:NoUndistorted} presents a finitely ramified fractal that does not admit an undistorted metric.

It is easy to check that the limit spaces for expanding replacement systems defined in \cite{BeFo2} satisfy the two conditions above, and therefore admit undistorted metrics.

\subsubsection*{Piecewise cellular homeomorphisms}
As a consequence of Theorem~\ref{thm:CompatibleMetric}, we get the following useful test for whether a given homeomorphism is a quasisymmetry.

\begin{corollary}\label{cor:PullbackTest}
Let $(X,d_X)$ be a finitely ramified fractal with undistorted metric, let $(Y,d_Y)$ be a metric space, and let $f\colon X\to Y$ be a homeomorphism.  Then $f$ is a quasisymmetry if and only if the pullback metric $f^*(d_Y)$ on $X$ is undistorted.
\end{corollary}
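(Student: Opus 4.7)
The plan is to reduce this corollary directly to Theorem~\ref{thm:CompatibleMetric} via the elementary observation that pulling back along $f$ converts $f$ itself into an isometry. Concretely, recall that the \emph{pullback metric} is defined by $f^*(d_Y)(a,b) = d_Y(f(a),f(b))$ for $a,b\in X$. Since $f$ is a homeomorphism, $f^*(d_Y)$ is a metric on $X$ that induces the original topology, and by construction the map $f\colon (X,f^*(d_Y)) \to (Y,d_Y)$ is an isometry.

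From this, I would observe that $f\colon (X,d_X) \to (Y,d_Y)$ factors as the identity $\mathrm{id}\colon (X,d_X) \to (X, f^*(d_Y))$ followed by the isometry $f\colon (X, f^*(d_Y)) \to (Y, d_Y)$. Since isometries are trivially quasisymmetries and compositions (and inverses) of quasisymmetries are quasisymmetries, it follows that $f$ is a quasisymmetry if and only if $d_X$ and $f^*(d_Y)$ are quasisymmetrically equivalent metrics on~$X$.

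With this reduction in hand, the corollary is immediate from Theorem~\ref{thm:CompatibleMetric}. For the forward direction, if $f$ is a quasisymmetry then $d_X$ and $f^*(d_Y)$ are quasisymmetrically equivalent; since $d_X$ is undistorted by hypothesis, part~(2) of Theorem~\ref{thm:CompatibleMetric} implies that $f^*(d_Y)$ is also undistorted. For the reverse direction, if $f^*(d_Y)$ is undistorted then $d_X$ and $f^*(d_Y)$ are two undistorted metrics on the same finitely ramified fractal $X$, so by part~(1) of Theorem~\ref{thm:CompatibleMetric} they are quasisymmetrically equivalent, which means the identity map between them is a quasisymmetry, and hence so is~$f$.

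There is no real obstacle here: once one notices the isometry trick, the statement is essentially a restatement of Theorem~\ref{thm:CompatibleMetric}, so all of the analytic content has already been absorbed into that theorem. The only thing worth double-checking is that the notion of ``undistorted'' depends solely on the cell structure of~$X$ and the metric (not on any ambient embedding), so that it makes sense to speak of $f^*(d_Y)$ being undistorted with respect to the given cell structure on~$X$; this is clear from Definition~\ref{def:undistorted}.
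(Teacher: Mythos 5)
Your proof is correct and is exactly the argument the paper intends: the paper states this result as an immediate consequence of Theorem~\ref{thm:CompatibleMetric} without further elaboration, and the isometry factorization $f = \bigl(f\colon (X,f^*(d_Y))\to(Y,d_Y)\bigr)\circ\mathrm{id}$ followed by an appeal to parts (1) and (2) of that theorem is precisely the intended reduction. Nothing is missing.
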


That is, $f$ is a quasisymmetry if and only if the images in $Y$ of the cells of $X$ satisfy the exponential decay condition and the cell separation condition.

We can use this corollary to define a large class of quasisymmetries between finitely ramified fractals. First, we say that a homeomorphism between an $m$-cell and an $n$-cell is \newword{cellular} if it maps $(m+k)$-cells to $(n+k)$-cells for each $k\geq 0$. A homeomorphism $f\colon X\to Y$ between finitely ramified fractals is \newword{piecewise cellular} if we can subdivide $X$ and $Y$ into cells $E_1,\ldots,E_k$ and $E_1',\ldots,E_k'$, respectively, such that $f$ maps each $E_i$ cellularly to $E_{i}'$.  We prove the following theorem in Section~\ref{subsec:PiecewiseCellularAreQuasisymmetries}.

\newcommand{\textPiecewiseCellular}{%
Let $X$ and $Y$ be finitely ramified fractals with undistorted metrics. Then any piecewise cellular homeomorphism from $X$ to $Y$ is a quasisymmetry.%
}
\begin{theorem}\label{thm:PiecewiseCellular}\textPiecewiseCellular
\end{theorem}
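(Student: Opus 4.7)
My plan is to invoke Corollary~\ref{cor:PullbackTest}, reducing the theorem to the statement that the pullback metric $d := f^{\ast}(d_Y)$ on $X$ is undistorted with respect to the cell structure of $X$. Let $E_1,\dots,E_k$ and $E_1',\dots,E_k'$ be the subdivisions witnessing that $f$ is piecewise cellular, and let $m_i$, $m_i'$ denote the levels of $E_i$, $E_i'$, with $\Delta_i := m_i'-m_i$ and $M := \max_i m_i$. The key structural observation is that for $n \geq M$, every $n$-cell of $X$ is a descendant in the cell tree of a unique $E_i$, and $f$ sends it to an $(n+\Delta_i)$-cell of $Y$ contained in $E_i'$; so above level $M$ the cell structure on $X$ and the $f$-preimage of the cell structure on $Y$ agree up to the piecewise-constant level shift $\Delta_i$.

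I would then verify the exponential decay condition for $d$ by splitting into cases. For intersecting cells $E$ at level $m \geq M$ and $E'$ at level $n \geq m$ of $X$, both $E$ and the level-$m$ ancestor $F$ of $E'$ lie in some $E_i$, $E_j$ respectively, and $f(E)$, $f(F)$, $f(E')$ are all cells of $Y$. Exponential decay of $d_Y$ applied to the chain $f(E') \subseteq f(F)$ controls the ratio $\diam f(E')/\diam f(F)$, while the intersecting cells $f(E)$, $f(F)$ of $Y$ have comparable diameters up to a factor depending only on $\max_i |\Delta_i|$ (again by exponential decay of $d_Y$, applied to their common ancestor). Multiplying these bounds yields the required estimate for $\diam f(E')/\diam f(E)$. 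When $m < M$, only finitely many pairs $(E, E')$ arise, so the constants $r, R, C$ can be enlarged to absorb them. The cell separation condition is verified analogously: for disjoint $n$-cells of $X$ with $n \geq M$, their images are disjoint cells of $Y$ whose levels differ from $n$ by at most $\max_i |\Delta_i|$, so cell separation in $Y$ gives the required lower bound in terms of diameters; the finitely many low-level pairs are handled by enlarging $\delta$.

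The main obstacle will be the bookkeeping at the interfaces between different $E_i$'s, where the cell structures of $X$ and $Y$ disagree most severely: a low-level cell of $X$ that straddles several $E_i$'s maps to a union of cells in $Y$ rather than a single cell, and similarly in the reverse direction. However, all such discrepancies are confined to cells at level below $M$, of which there are only finitely many in either fractal, so once uniform bounds on their diameters and pairwise $d$-distances are established the four constants of Definition~\ref{def:undistorted} can be chosen to work globally. Once $d$ is shown to be undistorted on $X$, Corollary~\ref{cor:PullbackTest} immediately delivers that $f$ is a quasisymmetry.
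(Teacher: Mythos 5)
Your overall strategy coincides with the paper's: reduce via Corollary~\ref{cor:PullbackTest} to showing that the pullback metric is undistorted, and exploit the fact that above some level $M$ the homeomorphism shifts cell levels by a uniformly bounded amount. The main-case computations you sketch (chaining $f(E')\subseteq f(F)$ with the comparison of the intersecting cells $f(E)$ and $f(F)$) are essentially the paper's. However, your treatment of the low-level exceptional cases contains a genuine error: for the exponential decay condition you assert that when $m<M$ ``only finitely many pairs $(E,E')$ arise.'' This is false. The condition quantifies over all $n$-cells $E'$ with $n\geq m$ meeting the $m$-cell $E$, and $n$ is unbounded, so there are infinitely many such pairs and you cannot simply enlarge $r$, $R$, $C$ to absorb them. (For cell separation the analogous claim is fine, since there both cells sit at the same level $n<M$.)

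The repair is either a chaining argument --- for $n\geq M$ pass through the level-$M$ ancestor $F$ of $E'$, apply the already-established estimate to the pair $(F,E')$, and observe that the finitely many comparisons between $m$-cells with $m<M$ and intersecting $M$-cells are uniformly bounded, with the discrepancy between $n-M$ and $n-m$ absorbed into the constant --- or, as the paper does, to verify not the global definition but the four local conditions of Lemma~\ref{lem:EquivalentExponentialDecay} (comparison with a parent, with a level-$(n+k)$ descendant, with an intersecting cell of the same level, and separation of disjoint cells whose parents intersect). Each of those conditions involves only a bounded level gap, so the cells below level $M$ genuinely contribute finitely many instances and the ``enlarge the constants'' step becomes legitimate. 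A second, smaller gloss: for cell separation at levels $n\geq M$ the images $f(E_1)$ and $f(E_2)$ may be cells of different levels in $Y$, and the cell separation condition only applies to disjoint cells of the same level; one must first replace the lower-level image by a sub-cell at the other's level realizing the same distance, as the paper does, before invoking it.
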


For a single finitely ramified fractal~$X$ with an undistorted metric, the piecewise cellular homeomorphisms $X\to X$ form a subgroup of the full quasisymmetry group.  For example:
\begin{enumerate}
    \item If we put a finitely ramified cell structure on the interval $[0,1]$ for which each $n$-cell splits into two $(n+1)$-cells, then the group of orientation-preserving piecewise cellular homeomorphisms of $[0,1]$ is Thompson's group~$F$.\smallskip
    \item The group of piecewise cellular homeomorphisms of the Vicsek fractal (see Example~\ref{ex:Vicsek}) is uncountable, and hence the Vicsek fractal has uncountably many quasisymmetries.
\end{enumerate}
Note that the rearrangements defined in \cite{BeFo2} always act on the corresponding limit space by piecewise-cellular homeomorphisms.  It follows that any rearrangement of the limit space for an expanding replacement system is a quasisymmetry with respect to any undistorted metric.

\subsection{Application to quasisymmetric uniformization}\label{subsec:Uniformization}

The \newword{quasisymmetric uniformization problem} for a metric space~$Y$ asks how to tell whether a given metric space~$X$ which is homeomorphic to $Y$ is quasisymmetrically equivalent to~$Y$ \mbox{\cite[Section~3]{Bonk}}.  This problem has been solved for the closed interval and circle~\cite{TukVai} and the standard middle-thirds Cantor set~\cite{DaSe}.  Additionally, uniformization theorems have been obtained for round Sierpi\'nski carpets in the plane ~\cite{Bonk2011}, geodesic trees \cite{BoMey,BoMey2}, Cantor circle Julia sets~\cite{QiYa}, circle domains in the plane~\cite{MeWi}, the \mbox{$2$-sphere}~\cite{BoKl2}, and other compact orientable surfaces~\cite{Wil} \cite{GeyWil}, though quasisymmetric uniformization for higher-dimensional spheres and Euclidean spaces appears to be difficult~\cite{Semmes1,Semmes2}.

Theorem~\ref{thm:CompatibleMetric} gives a solution to the quasisymmetric uniformization problem for certain finitely ramified fractals.  Specifically, we say that a finitely ramified fractal $Y$ is \newword{topologically rigid} if every self-homeomorphism of $Y$ preserves the cell structure. Bandt and Retta have shown that many well-known fractals are topologically rigid~\cite[Theorem~4.2]{BaRe}, including the Sierpi\'{n}ski triangle, the $n$-dimensional Sierpi\'{n}ski gasket, the pentagasket and hexagasket (see~\cite{StrNotices}), and so forth. For example, the Sierpi\'{n}ski triangle is topologically rigid because the three points at which the $1$-cells intersect are the only three points whose removal disconnects the fractal into three pieces, so any homeomorphism must permute these points and hence the $1$-cells, and by induction the $n$-cells. As observed by Jasper Weinburd~\cite[Chapter~4]{Wei}, topological rigidity is also common for Julia sets of rational maps, with one such example shown in Figure~\ref{fig:BirdsJulia}.  Note that the Sierpi\'{n}ski triangle also arises as the Julia set of a rational map~\cite{Ushiki}.
\begin{figure}
\centering
\includegraphics{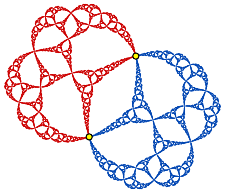}\hfill\includegraphics{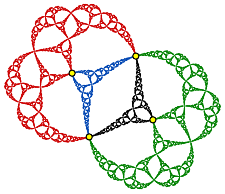}\hfill\includegraphics{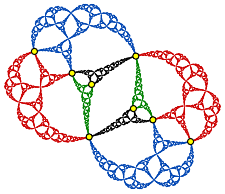}
\caption{The Julia set for $f(z) = (e^{2\pi i/3} z^2-1)/(z^2-1)$ has a finitely ramified cell structure with $2^n$ different $n$-cells for each~$n$. Every homeomorphism of the Julia set preserves this cell structure, and the homeomorphism group has order~$8$.}
\label{fig:BirdsJulia}
\end{figure}

Given a finitely ramified fractal $Y$ and a space $X$ that is homeomorphic to~$Y$, there is a finitely ramified cell structure on $X$ obtained by pulling back the cell structure on $Y$.  Furthermore, if $Y$ is topologically rigid, the cell structure on $X$ is canonical in that it is independent of the homeomorphism.  As a corollary to Theorem~\ref{thm:CompatibleMetric}, we obtain the the following solution to the quasisymmetric uniformization problem for topologically rigid finitely ramified fractals.

\begin{corollary}
\label{cor:quasisymrigid}
Let $Y$ be a finitely ramified fractal which is topologically rigid and whose metric is undistorted, and let $X$ be a metric space which is homeomorphic to~$Y$.  Then $X$ is quasisymmetrically equivalent to $Y$ if and only if the metric on $X$ is undistorted (with respect to the cell structure on $X$ induced by the homeomorphism).
\end{corollary}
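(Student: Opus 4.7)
The plan is to leverage topological rigidity of $Y$ to force every homeomorphism $X\to Y$ to be cellular, and then to invoke Theorem~\ref{thm:CompatibleMetric} to relate undistorted metrics on $X$. To begin, I would fix any homeomorphism $\varphi\colon X\to Y$ and recall that the cell structure on $X$ is defined by declaring cells of $X$ to be $\varphi$-preimages of cells of $Y$ (independent of the choice of $\varphi$ by topological rigidity). The key opening observation is that $\varphi$ is a bijection of cells at every level, so the pullback metric $\varphi^*(d_Y)$ assigns to each cell of $X$ the same diameter, and to each pair of disjoint cells the same separation, as $d_Y$ does to the corresponding cells of $Y$; since $d_Y$ is undistorted on $Y$, it follows automatically that $\varphi^*(d_Y)$ is undistorted on $X$.

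For the direction $(\Leftarrow)$, I would assume $d_X$ is undistorted. Then $d_X$ and $\varphi^*(d_Y)$ are two undistorted metrics on the same finitely ramified fractal, so Theorem~\ref{thm:CompatibleMetric}(1) yields that the identity map $(X,d_X)\to(X,\varphi^*(d_Y))$ is a quasisymmetry. Composing with the tautological isometry $\varphi\colon(X,\varphi^*(d_Y))\to(Y,d_Y)$ then shows that $\varphi\colon(X,d_X)\to(Y,d_Y)$ is a quasisymmetry, establishing that $X$ and $Y$ are quasisymmetrically equivalent.

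For the direction $(\Rightarrow)$, I would let $f\colon(X,d_X)\to(Y,d_Y)$ be a quasisymmetry. The crucial use of topological rigidity is that $f\circ\varphi^{-1}$ is a self-homeomorphism of $Y$ and therefore preserves the cell structure of $Y$, so that $f=(f\circ\varphi^{-1})\circ\varphi$ maps cells of $X$ bijectively to cells of $Y$. Applying the opening observation to $f$ in place of $\varphi$ then gives that $f^*(d_Y)$ is undistorted on $X$. Since $f$ is a quasisymmetry, the identity map on $X$ witnesses that $d_X$ and $f^*(d_Y)$ are quasisymmetrically equivalent, so Theorem~\ref{thm:CompatibleMetric}(2) forces $d_X$ to be undistorted.

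The main obstacle is the $(\Rightarrow)$ direction: without rigidity, an arbitrary quasisymmetry $f$ has no reason to respect the cell structure, and the cell structure on $X$ that one would pull back via $f$ could differ from the canonical one, breaking the comparison of diameters. Topological rigidity of $Y$ is precisely what bypasses this issue, after which the corollary reduces to a direct application of Theorem~\ref{thm:CompatibleMetric}.
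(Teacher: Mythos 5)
Your proof is correct and is essentially the argument the paper intends: the corollary is stated without an explicit proof, but your derivation — using topological rigidity to force any homeomorphism (and in particular any quasisymmetry) $X\to Y$ to carry cells to cells, so that the pullback metric is undistorted, and then invoking both parts of Theorem~\ref{thm:CompatibleMetric} — is exactly the intended route. No gaps.
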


Corollary \ref{cor:quasisymrigid} gives a solution to the quasisymmetric uniformization problem for the Sierpi\'{n}ski triangle and the $n$-dimensional Sierpi\'{n}ski gasket.  This solution is nontrivial in the sense that there are many metrics on these fractals that are not quasisymmetrically equivalent to the standard ones.  For example, Figure~\ref{fig:BadGaskets}(a) shows the \newword{Rauzy gasket} (see~\cite{ArnSta}), which is defined using  an iterated function system on a triangle similar to that of the Sierpi\'{n}ski triangle, except that the three maps are projective instead of affine linear.  Though it is homeomorphic to the Sierpi\'{n}ski triangle, the Rauzy gasket has the property that the maximum diameter of an \mbox{$n$-cell} is on the order of $1/n$. This violates the exponential decay condition for an undistorted metric, so it follows that the Rauzy gasket is not quasisymmetrically equivalent to the Sierpi\'{n}ski triangle.  Similarly, Figure~\ref{fig:BadGaskets}(b) shows a fractal homeomorphic to the Sierpi\'{n}ski triangle which fails the cell separation condition near the points where the circular sectors are tangent.
\begin{figure}
\centering
$\underset{\textstyle\text{(a)}}{\includegraphics{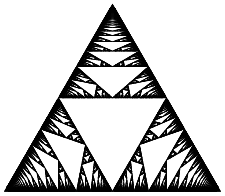}}
\qquad\underset{\textstyle\text{(b)}}{\includegraphics{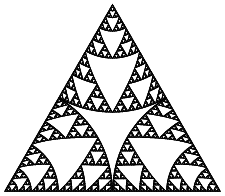}}$
\caption{(a) The Rauzy gasket (b) A Sierpi\'{n}ski triangle whose three main subtriangles have been expanded to circular sectors via radial projection. Neither of these fractals is quasisymmetrically equivalent to a standard Sierpi\'{n}ski triangle.}
\label{fig:BadGaskets}
\end{figure}

\begin{remark}Note that any self-homeomorphism of a topologically rigid finitely ramified fractal must permute the cells at each level.  It follows that every self-homeomorphism is a quasisymmetry with respect to any undistorted metric, and the full group of homeomorphisms must be residually finite. Indeed, many topologically rigid fractals have \textit{finite} homeomorphism groups. 
 For example, the homeomorphism group of the Sierpi\'nski triangle is dihedral of order~$6$ \cite{Lou}, and the homeomorphism group of the Julia set in Figure~\ref{fig:BirdsJulia} is dihedral of order~$8$ (with a faithful action on the four 2-cells).  Such fractals are trivially quasisymmetrically rigid. 
\end{remark}

\subsection{Applications to Julia sets}\label{subsec:IntroJuliaSets}
As we previously mentioned, the Julia set for any hyperbolic polynomial has a finite invariant branch cut (see Proposition~\ref{prop:PolynomialJuliaSetsFinitelyRamified}), as do the Julia sets for some hyperbolic rational maps such as the bubble bath.  By Theorem~\ref{thm:FinitelyRamifiedJulia}, such Julia sets inherit a natural finitely ramified cell structure.  The following theorem, which we prove in Section~\ref{subsec:PiecewiseCanonical}, lets us apply our theory of undistorted metrics to obtain quasisymmetries of such Julia sets.

\newcommand{\textJuliaSetUndistorted}{%
Let $f\colon \Chat\to \Chat$ be a hyperbolic rational function, and suppose that the Julia set $J_f$ is connected and has a finite invariant branch cut.  Then the restriction of the spherical metric on\/ $\Chat$ to $J_f$ is undistorted with respect to the induced finitely ramified cell structure.
}
\begin{theorem}\label{thm:JuliaSetUndistorted}\textJuliaSetUndistorted
\end{theorem}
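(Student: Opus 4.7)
The plan is to verify the two conditions of Definition~\ref{def:undistorted} for the spherical metric on $J_f$, using hyperbolicity together with the Koebe distortion theorem. Because $J_f$ is compact and disjoint from every critical point, the spherical metric is bi-Lipschitz equivalent on $J_f$ to any expanding conformal metric $\rho$ provided by hyperbolicity, say with $\lambda\leq|f'|_\rho\leq\Lambda$ on a neighborhood $U$ of $J_f$ for constants $1<\lambda\leq\Lambda$. Since undistortion is preserved (up to changing constants) under a bi-Lipschitz change of metric, I would work in $\rho$ throughout.

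The main analytic input is a uniform Koebe distortion estimate for inverse branches: hyperbolicity supplies a radius $r_0>0$ such that for every $w\in J_f$ and every $n\geq 0$, each branch of $f^{-n}$ at $w$ extends to a single-valued conformal map on the disk $D(w,r_0)$, and the ratio $|(f^{-n})'(z_1)|/|(f^{-n})'(z_2)|$ is bounded by a universal constant on $D(w,r_0/2)$. Combined with the Markov property that $f^{n-1}$ sends every $n$-cell $E$ homeomorphically onto one of finitely many $1$-cells, this yields
\[
\diam(E) \;\asymp\; \frac{1}{|(f^{n-1})'(p)|}
\]
for every $n\geq 1$, every $n$-cell $E$, and every $p\in E$, with comparability constants independent of $E$, $n$, and~$p$.

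For exponential decay I would take an $m$-cell $E$, an $n$-cell $E'$ with $n\geq m$, and a common point $p\in E\cap E'$. Applying the diameter estimate at $p$ to both cells and using the chain rule $(f^{n-1})'(p)=(f^{n-m})'(f^{m-1}(p))\cdot(f^{m-1})'(p)$ gives
\[
\frac{\diam(E')}{\diam(E)} \;\asymp\; \frac{1}{|(f^{n-m})'(f^{m-1}(p))|} \;\in\; \bigl[\Lambda^{-(n-m)},\lambda^{-(n-m)}\bigr],
\]
which is exponential decay with $r=\Lambda^{-1}$ and $R=\lambda^{-1}$. For cell separation I would take disjoint $n$-cells $E_1,E_2$, let $\tilde E$ be their deepest common ancestor (say an $m$-cell), and apply the uniform Koebe estimate to the inverse branch of $f^m$ on a neighborhood of $J_f$. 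Since $f^m$ sends $\tilde E$ homeomorphically onto $J_f$ and sends $E_1,E_2$ to disjoint $(n-m)$-cells $F_1,F_2$ lying in distinct $1$-cells, this reduces the problem to a uniform lower bound on $d(F_1,F_2)/\diam(F_1)$ for disjoint cells in distinct $1$-cells; when the two containing $1$-cells are themselves disjoint, the bound is immediate from the finiteness of the collection of disjoint $1$-cell pairs together with the uniform upper bound on cell diameters.

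The main obstacle is the remaining subcase in which the two containing $1$-cells share a common boundary vertex $p\in f^{-1}(S)$. I would handle this by establishing an auxiliary separation lemma: any $k$-cell that does not contain such a vertex $p$ is separated from $p$ by a uniform positive multiple of its own diameter. The lemma is proved by induction on $k$, again using the Markov property and the uniform Koebe distortion estimate, together with local finiteness of the cell structure near $p$. Granted the lemma, the triangle inequality combined with the fact that $F_1$ and $F_2$ sit in distinct $1$-cells meeting only at $p$ delivers the required lower bound for $d(F_1,F_2)/\diam(F_1)$, completing the verification of cell separation and hence the theorem.
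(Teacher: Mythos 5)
The exponential decay half of your argument is essentially sound and takes a genuinely different route from the paper (which invokes V\"ais\"al\"a's egg yolk principle to make the inverse branches $f^{-(n-N)}$ uniform quasisymmetries onto finitely many model cells of level $\leq N$, and then applies Proposition~\ref{prop:undistorted} via the local characterization in Lemma~\ref{lem:EquivalentExponentialDecay}). Your estimate $\diam(E)\asymp|(f^{n-1})'(p)|^{-1}$ does work, provided you add the standard chaining over a bounded number of Koebe disks, since a $1$-cell need not fit inside a single disk of the uniform radius $r_0/2$.

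The cell separation half has a genuine gap, in two places. First, the reduction to the deepest common ancestor $\tilde E$ asks you to compare $d(E_1,E_2)/\diam(E_1)$ with $d(F_1,F_2)/\diam(F_1)$, where $F_i=f^m(E_i)$ live in a set of \emph{definite} size (the closure of a component of $J_f\setminus S$, not all of $J_f$; note also that $F_1,F_2$ need not be disjoint, since $f^m$ is injective only on the interior of $\tilde E$). There is no single inverse branch of $f^m$ defined on a neighborhood of such a set (its complement contains postcritical points), and the Koebe estimate controls distortion only inside disks of radius $r_0/2$; the derivative of the relevant branch near $F_1$ and near $F_2$ can differ by a factor unbounded in $m$, so this unit-scale comparison does not follow from your distortion input. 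Second, even granting the reduction, your auxiliary lemma plus the triangle inequality does not close the shared-vertex case: knowing $d(F_i,p)\geq c\,\diam(F_i)$ for $i=1,2$ and that the two containing $1$-cells meet only at $p$ does not bound $d(F_1,F_2)$ from below, since two sets can each be far from $p$ relative to their own diameters and still be arbitrarily close to each other (think of two small arcs on opposite lobes of a figure-eight, away from the crossing point). The missing idea is the localization proved in Lemma~\ref{lem:EquivalentExponentialDecay}(4): global cell separation follows from the same condition restricted to disjoint $n$-cells whose \emph{parents intersect} (pass to the maximal level $m$ at which $E_1,E_2$ have intersecting ancestors; their $(m+1)$-ancestors are disjoint, have intersecting parents, and contain $E_1,E_2$). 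For such pairs the entire configuration is small, $f^{n-N}$ is injective with bounded distortion on it, and one lands in one of finitely many model configurations, which is how the paper finishes.
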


\subsubsection*{Piecewise canonical homeomorphisms}
Theorem~\ref{thm:JuliaSetUndistorted} gives a large class of Julia sets whose metrics are undistorted. By Theorem~\ref{thm:PiecewiseCellular}, any piecewise cellular homeomorphism of such a Julia set is a quasisymmetry.  As we will now describe, such Julia sets have a very natural class of piecewise cellular homeomorphisms which are also piecewise conformal.

If $f\colon \Chat\to\Chat$ is a hyperbolic rational map and $A$ and $B$ are connected subsets of $J_f$, we say that a homeomorphism $g\colon A\to B$ is \newword{canonical} if there exist positive integers $m$ and $n$ so that $f^m\circ g$ agrees with $f^n$ on~$A$, i.e.~$g$ is a branch of $f^{-m}\circ f^n$ on~$A$.  Note that restrictions, compositions, and inverses of canonical homeomorphisms are canonical, and that canonical homeomorphisms map periodic and preperiodic points of $J_f$ to periodic and preperiodic points.

\begin{figure}
\centering
\includegraphics{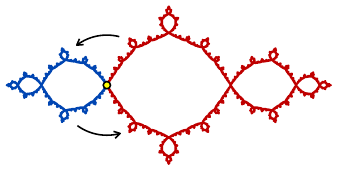}
\caption{An order-two quasisymmetry of the basilica.  This homeomorphism is piecewise canonical with a single breakpoint at $p=\bigl(1-\sqrt{5}\bigr)\bigr/2$. The blue component maps to the red component by~$f(z)=z^2-1$, and the red component maps to the blue component by a branch of~$f^{-1}$.}
\label{fig:BasilicaRearrangement}
\end{figure}
We say that a homeomorphism $h\colon J_f\to J_f$ is \newword{piecewise canonical} if there exists a finite set $B\subseteq J_f$ of \newword{breakpoints} such that $h$ restricts to a canonical homeomorphism on each component of $J_f\setminus B$.  For example, Figure~\ref{fig:BasilicaRearrangement} shows a piecewise canonical homeomorphism of the basilica. (See~\cite{BeFo1} for many more such examples.) The following theorem, which we prove in Section~\ref{subsec:PiecewiseCanonical}, shows that piecewise canonical homeomorphisms are often quasisymmetries.

\newcommand{\textPiecewiseCanonicalQuasisymmetries}{Let $f\colon \Chat\to\Chat$ be a hyperbolic rational map, and suppose the Julia set $J_f$ is connected and has a finite invariant branch cut. Then every piecewise canonical homeomorphism of $J_f$ whose breakpoints are periodic or preperiodic under~$f$ is a quasisymmetry.
}%
\begin{theorem}\label{thm:PiecewiseCanonicalQuasisymmetries}\textPiecewiseCanonicalQuasisymmetries
\end{theorem}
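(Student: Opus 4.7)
The strategy is to reduce to Theorem~\ref{thm:PiecewiseCellular} by exhibiting $h$ as a piecewise cellular homeomorphism with respect to a suitably enlarged cell structure on~$J_f$.

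First I would enlarge the branch cut. Let $B$ denote the set of breakpoints of $h$. Since canonical homeomorphisms preserve the set of periodic and preperiodic points of $f$, the image $h(B)$ is also a finite set of periodic or preperiodic points. Define
\[
S' \;=\; S \cup \bigcup_{k\geq 0} f^k(B) \cup \bigcup_{k\geq 0} f^k\bigl(h(B)\bigr).
\]
Then $S'$ is finite (a finite union of finite forward orbits) and satisfies $f(S')\subseteq S'$ by construction. Enlarging the branch cut only refines the components of $J_f\setminus f^{-1}(S')$, so $f$ remains injective on each component, and $S'$ is still a finite invariant branch cut for $J_f$. By Theorems~\ref{thm:FinitelyRamifiedJulia} and~\ref{thm:JuliaSetUndistorted}, the induced cell structure is finitely ramified and the spherical metric is undistorted with respect to it.

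Next I would verify that $h$ is piecewise cellular at a sufficiently high level. Because $h$ has finitely many canonical pieces, each satisfying a relation $f^{m_i}\circ h = f^{n_i}$ on its domain, one can choose $N$ larger than every $n_i$. Since $B\subseteq S'\subseteq f^{-N}(S')$, each $N$-cell $E$ lies in a single canonical piece of $h$, so $h|_E$ satisfies a single relation $f^{m}\circ h = f^{n}$ with $N\geq n+1$. Iterating the branch cut property shows that $f^n$ is a homeomorphism from $E$ onto an $(N-n)$-cell $E''$, and that $f^m$ is a homeomorphism onto $E''$ from any $(N-n+m)$-cell that maps into it. Since $h(E)$ is connected, contained in $(f^m)^{-1}(E'')$, and has $f^m$ injective on it (because $f^m\circ h = f^n$ is injective on $E$), it must coincide with a single $(N-n+m)$-cell. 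Applying the same reasoning at deeper levels shows that $h$ sends each $(N+k)$-subcell of $E$ to an $(N-n+m+k)$-subcell of $h(E)$, so $h|_E$ is cellular.

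Thus $h$ is piecewise cellular with respect to the $N$-cell subdivision, and Theorem~\ref{thm:PiecewiseCellular} gives that $h$ is a quasisymmetry. The main obstacle I anticipate is the cell-level bookkeeping: because $f$ does not carry $1$-cells onto the $0$-cell $J_f$, one has to work at a sufficiently high level to ensure that all the relevant iterates of $f$ act as homeomorphisms between cells of the induced structure, and one must verify (as sketched above) that $h(E)$ is a single cell rather than a connected union of several cells glued at intersection points.
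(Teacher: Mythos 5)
Your proposal follows essentially the same route as the paper's proof: absorb the forward orbits of the breakpoints into the branch cut, pass to a level $N$ exceeding all the exponents $n_i$ so that each $N$-cell lies in a single canonical piece, show that $h$ carries cells to cells, and conclude via Theorems~\ref{thm:JuliaSetUndistorted} and~\ref{thm:PiecewiseCellular}. The only imprecision is your claim that $f^n$ (and hence $f^m\circ h$) is injective on the \emph{closed} cell $E$: in general $f$ can identify distinct boundary points of a cell (e.g.\ for the basilica with $S=\{p\}$, both boundary points $p$ and $-p$ of a $1$-cell map to $p$), so the paper instead argues on interiors --- $f^{m_i}$ maps $\mathrm{int}(h(E))$ homeomorphically onto $\mathrm{int}(E_*)$, the connected open set $h(\mathrm{int}(E))$ therefore equals the interior of a single $(M+m_i-n_i)$-cell, and since $E$ is the closure of its interior so is $h(E)$ --- which is exactly the verification you flag as outstanding at the end of your sketch.
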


Note that the collection of piecewise canonical homeomorphisms of a Julia set whose breakpoints are periodic or pre-periodic forms a group.  These groups are closely related to the rearrangement groups described by the authors in~\cite{BeFo2}, though to obtain a rearrangement group, one must further restrict the allowed breakpoints to lie in finitely many grand orbits.  For example, the basilica Thompson group $T_B$ defined by the authors in \cite{BeFo1} is precisely the group of piecewise canonical homeomorphisms of the basilica whose breakpoints lie in the grand orbit of the fixed point at $p=\bigl(1-\sqrt{5}\bigr)\bigr/2$. Lyubich and Merenkov proved that the elements of $T_B$ are quasisymmetries, and that this group is in a certain sense dense in the group of planar quasisymmetries of the basilica~\cite{LyMe}.

\subsubsection*{Infinitely many quasisymmetries}
Theorem~\ref{thm:PiecewiseCanonicalQuasisymmetries} allows us to construct infinitely many quasisymmetries for many different Julia sets.  As a first application, we show in Section~\ref{subsec:bubblebath} that the bubble bath Julia set introduced in Example~\ref{ex:BubbleBath} has infinitely many quasisymmetries.

\newcommand{\textBubbleBathModularGroup}{The Julia set for $f(z)=1-z^{-2}$ has a finite invariant branch cut.  The group of quasisymmetries is infinite, and indeed it contains the modular group\/~$\Z_2*\Z_3$.
}%
\begin{theorem}\label{thm:BubbleBathModularGroup}\textBubbleBathModularGroup
\end{theorem}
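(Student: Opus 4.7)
The plan is to verify that $S = \{q, \overline{q}\}$ is a finite invariant branch cut, construct piecewise canonical homeomorphisms $\alpha$ and $\beta$ of orders $2$ and $3$ with periodic or preperiodic breakpoints, and apply the Ping-Pong Lemma for Free Products to conclude that $\langle\alpha,\beta\rangle\cong\Z_2*\Z_3$. The surrounding machinery does most of the heavy lifting: Theorem~\ref{thm:FinitelyRamifiedJulia} produces the induced finitely ramified cell structure, Theorem~\ref{thm:JuliaSetUndistorted} makes the spherical metric on $J_f$ undistorted, and Theorem~\ref{thm:PiecewiseCanonicalQuasisymmetries} upgrades $\alpha$ and $\beta$ to quasisymmetries. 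Thus, modulo the branch cut verification, the proof reduces to a combinatorial construction on the six $1$-cells.

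Since $q$ and $\overline{q}$ are both fixed, $f(S)=S$; the remaining content of the branch cut condition is that $f$ is injective on each component of $J_f\setminus f^{-1}(S)$. As $f$ has degree $2$, $f^{-1}(S) = \{q,\overline{q}, q',\overline{q'}\}$ for the two non-fixed preimages $q',\overline{q'}$, and I would verify by a local planar analysis at these four points (together with the structure of the Fatou component graph of $f$) that $J_f\setminus f^{-1}(S)$ has exactly six components and that $f$ restricts to a homeomorphism from each such component onto one of the three components of $J_f\setminus S$. This gives the cell structure visible in Figure~\ref{fig:BubbleBathCellStructure}.

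For $\beta$, I would exploit the period-$3$ super-attracting cycle $0\to\infty\to 1\to 0$. The $1$-cells fall into a symmetric pattern around the three critical Fatou components, and I would define $\beta$ to cyclically permute a triple of $1$-cells (one near each critical Fatou component) via branches of $f^{-m}\circ f^n$ that realize the three-cycle of the critical orbit, acting as the identity on the remaining cells. Order $3$ follows by construction, and continuity on the boundary of the support is ensured because the relevant boundary points are fixed points of $f$ in $S$. For $\alpha$, I would pick two $1$-cells $E_1, E_2$ with $f(E_1) = f(E_2)$ and swap them via the canonical involution $(f|_{E_2})^{-1}\circ f|_{E_1}$, extending by the identity on the rest of $J_f$, or, if necessary, by further canonical swaps of descendant cells to match boundary values. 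All breakpoints lie in $\bigcup_{n\ge 0}f^{-n}(S)$ and hence are periodic or preperiodic, so Theorem~\ref{thm:PiecewiseCanonicalQuasisymmetries} makes both $\alpha$ and $\beta$ quasisymmetries.

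For the free-product relations, the Ping-Pong Lemma for Free Products requires disjoint subsets $U, V\subseteq J_f$ with $\alpha(V)\subseteq U$ and $\beta^k(U)\subseteq V$ for $k=1,2$. Choosing $U$ and $V$ to be suitable unions of $1$-cells compatible with the supports of $\alpha$ and $\beta$, this becomes a finite combinatorial check on the six $1$-cells. I expect the main obstacles to be the verification of the branch cut property (which requires careful planar analysis of $J_f$ at the four points of $f^{-1}(S)$) and ensuring continuity of the piecewise canonical involution $\alpha$, since the naive swap of two $1$-cells typically fails to be continuous unless the shared boundary points are fixed by the canonical identification; this may force one to realize $\alpha$ as a swap of deeper cells so that the combinatorial compatibility at all breakpoints holds.
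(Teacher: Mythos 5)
Your high-level strategy --- exhibit the branch cut, build finite-order piecewise canonical quasisymmetries of orders $2$ and $3$, and run ping-pong for free products --- is exactly the paper's, but the concrete constructions of $\alpha$ and $\beta$ are where the content lies, and as written both fail. The six $1$-cells meet one another only at the four points of $f^{-1}(S)=\{q,\overline{q},-q,-\overline{q}\}$: two of the $1$-cells ($E_{01}$ and $E_{1\infty}$) have boundary $\{q,\overline{q}\}$, two (their images under $z\mapsto -z$) have boundary $\{-q,-\overline{q}\}$, and the remaining two have mixed boundary. Consequently no three $1$-cells share a common boundary pair, and a $3$-cycle of $1$-cells that is the identity on the remaining three cannot be continuous: at any point where the support meets the identity region the permuting branch must fix that point, which forces all three permuted cells to contain it. Your justification that ``the relevant boundary points are fixed points of $f$ in $S$'' is false for the cells adjacent to $-q$ and $-\overline{q}$, which lie in $f^{-1}(S)\setminus S$ and are not fixed. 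The paper's order-$3$ element $k$ is not compactly supported: it cyclically permutes the three components $E_{1\infty}\to E_{01}\to E_{0\infty}$ of $J_f\setminus S$ (the first two pieces via $f$, the last via a branch of $f^{-2}$), so its only breakpoints are the fixed points $q,\overline{q}$. Similarly, your $\alpha$ swaps two $1$-cells in the same fiber of $f$; those are related by $z\mapsto-z$ and have disjoint boundary pairs, so the naive swap is discontinuous, as you note, but the proposed repair (``swap deeper cells'') is never carried out and is unnecessary: the correct choices are either the global isometry $h(z)=-z$ (the paper's, using $f(-z)=f(z)$) or the swap of $E_{01}$ with $E_{1\infty}$ via $f$ and its inverse branch, both of which fix $q$ and $\overline{q}$.

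Two further points. First, the branch-cut verification is not a routine local check: one must show that $q$ and $\overline{q}$ lie on all three of $\partial U_0$, $\partial U_1$, $\partial U_\infty$ and that $J_f\setminus\{q,\overline{q}\}$ has exactly three components (Lemma~\ref{lem:PropertiesBubbleBath}, which relies on the filling machinery of Appendix~\ref{ap:RemovingFiniteSets}), and then apply Proposition~\ref{prop:MakingBranchCuts} to an explicit graph of arcs through $U_0,U_1,U_\infty$ containing the critical values; your sketch does not supply this. Second, your ping-pong step is stated only conditionally on the supports of $\alpha$ and $\beta$, so it inherits the gaps above; for reference, the paper takes $X_K=E_{1\infty}\cup E_{01}$ and $X_H=E_{0\infty}$ and checks the proper inclusions directly.
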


The existence of a copy of $\Z_2*\Z_3$ in the group of homeomorphisms of the bubble bath was first observed by Jasper Weinburd~\cite{Wei}.

We also prove a general theorem about certain polynomial Julia sets having infinitely many quasisymmetries. For simplicity, we consider only hyperbolic polynomials which are \newword{postcritically finite}, i.e.\ all of the critical points are periodic or pre-periodic. As defined by Douady and Hubbard~\cite{OrsayNotes}, every postcritically finite polynomial has a \newword{Hubbard tree}, which is a certain finite topological tree contained in the filled Julia set (see Section~\ref{subsec:quasisymmetrygroupscontainingF}).  Our main theorem about postcritically finite polynomials is the following.

\begin{theorem}\label{thm:BigTheorem}
Let $f\colon \C\to \C$ be a postcritically finite hyperbolic polynomial of degree $d\geq 2$.
\begin{enumerate}
    \item If $f$ is unicritical, then the quasisymmetry group of $J_f$ contains the free product $\Z_d*\Z_n$ for some~$n\geq 2$.\smallskip
    \item If one of the leaves of the Hubbard tree for $f$ is contained in a periodic cycle of local degree~$2$, then the quasisymmetry group of $J_f$ contains Thompson's group~$F$.
\end{enumerate}
In particular, in both cases the quasisymmetry group is infinite.
\end{theorem}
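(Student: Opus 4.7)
My strategy in both parts is to construct explicit piecewise canonical self-homeomorphisms of $J_f$ and then identify the group they generate by a ping-pong argument. Since $f$ is hyperbolic with connected Julia set, Proposition~\ref{prop:PolynomialJuliaSetsFinitelyRamified} supplies a finite invariant branch cut, so by Theorem~\ref{thm:PiecewiseCanonicalQuasisymmetries} every piecewise canonical self-homeomorphism of $J_f$ with periodic or preperiodic breakpoints is automatically a quasisymmetry with respect to the (undistorted) restriction of the spherical metric.

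For part~(1), after an affine change of variables I may assume $f(z) = z^d + a$, so that the unique critical point lies at $0$. The rotation $\alpha(z) = \zeta z$, where $\zeta$ is a primitive $d$-th root of unity, satisfies $f \circ \alpha = f$ and is therefore a canonical self-homeomorphism of $J_f$ of order exactly~$d$. To produce a second generator, I choose the branch cut $S$ so that its grand orbit contains a repelling periodic point $q \in J_f$, of period $N$, at which $n \geq 2$ cells of the induced cell structure meet and whose orbit is disjoint from a chosen cell permuted by~$\alpha$. Combining branches of $f^{N}$ and $f^{-N}$ cyclically across these $n$ cells and extending by the identity produces a piecewise canonical self-homeomorphism $\beta$ of order~$n$ whose support is disjoint from that of~$\alpha$. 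Taking the supports of $\alpha$ and $\beta$ as the ping-pong sets, the Ping-Pong Lemma for Free Products then identifies $\langle \alpha, \beta \rangle \cong \Z_d * \Z_n$.

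For part~(2), let $v$ be a leaf of the Hubbard tree contained in a periodic cycle of period $p$ and local degree~$2$. The local-degree-$2$ condition means that a single simple critical point sits somewhere along the cycle, so the first-return map $f^p$ exhibits binary branching at the Fatou component associated with~$v$. This binary branching transfers, via the induced cell decomposition of $J_f$, to a strictly binary sub-tree of cells nesting down onto the portion of $J_f$ adjacent to~$v$, in which every cell has exactly two children. Branches of $f^{\pm p}$ supply canonical homeomorphisms between any two cells in this binary sub-tree, and pasting finitely many such branches together (each extended by the identity off the cell it acts on) realizes every ``dyadic rearrangement'' of this binary tree as a piecewise canonical self-homeomorphism of~$J_f$. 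These dyadic rearrangements form a copy of Thompson's group~$F$, and the Ping-Pong Lemma for Thompson's Group~$F$ confirms that the resulting homomorphism $F \hookrightarrow QS(J_f)$ is faithful. The quadratic case (Theorem~\ref{thm:Theorem2}) then follows because, for any hyperbolic quadratic, the critical point is a periodic leaf of the Hubbard tree lying in a cycle of local degree~$2$.

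The principal technical obstacle is choosing the finite invariant branch cut $S$ so that the induced cell structure is simultaneously compatible with all of the above constructions. In part~(1) I must arrange that $S$ contains the orbit of the cut point~$q$, so that $q$ becomes an actual junction of cells; in part~(2) I must arrange that $S$ meets the periodic cycle at a point chosen so that the induced cell subdivisions along the cycle align with the binary branching dictated by the simple critical point. Once $S$ is chosen appropriately, checking that the canonical pieces fit together into genuine self-homeomorphisms of $J_f$ and verifying the hypotheses of the relevant ping-pong lemma reduce to essentially combinatorial tasks.
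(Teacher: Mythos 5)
Your overall strategy (piecewise canonical homeomorphisms certified by Theorem~\ref{thm:PiecewiseCanonicalQuasisymmetries}, then ping-pong) is the same as the paper's, but both parts have genuine gaps.

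In part (1) the construction of $\beta$ is fatally flawed as stated: you arrange for $\beta$ to have \emph{support disjoint from that of $\alpha$}. Two homeomorphisms with disjoint supports commute, so $\langle\alpha,\beta\rangle$ would be a quotient of $\Z_d\times\Z_n$, never the free product, and the ping-pong hypotheses (each nontrivial $h\in H$ must map $X_K$ \emph{into} a proper subset of $X_H$, and vice versa) cannot hold for disjointly supported generators. Note also that $\alpha(z)=\zeta z$ has support essentially all of $J_f$, so there is no room for a disjointly supported $\beta$ anyway. The interaction between the two generators is the whole point: the paper takes $X_K=J_f\cap\pi_U^{-1}(\overline U\setminus\{a\})$ (sitting inside the sector containing the critical point) and $X_H=J_f\cap(S_2\cup\cdots\cup S_n)$, and verifies that the rotation $k$ pushes $X_H$ into $X_K$ while the sector-permuting map $h$ pushes $X_K$ into $X_H$. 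Separately, you give no argument that a periodic point $q$ at which $n\geq 2$ components of $J_f$ meet actually exists; this is the content of the paper's Lemma~\ref{lem:ExistsRotationalFixedPoint}, proved by counting the $d$ repelling fixed points against the $d-1$ fixed external rays to produce a ``rotational'' fixed point.

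In part (2) the step ``branches of $f^{\pm p}$ supply canonical homeomorphisms between any two cells in this binary sub-tree'' is asserted, but it is exactly the point that requires the leaf hypothesis and is where the proof can fail. The pieces in question are the decorations $L_{\{t\}}=\pi_U^{-1}(\gamma(t))$ hanging off the boundary of the critical Fatou component; a branch of $f^{-1}$ carries $L_{\{2t\}}$ back to $L_{\{t\}}$ homeomorphically only if $L_{\{t\}}$ contains no critical point. The paper deduces this from the hypothesis that $c$ is a \emph{leaf} of $H_f$: then $H_f\cap U$ is a single internal ray, all other postcritical points lie in $L_{\{0\}}$, and hence (Lemma~\ref{lem:ProjectionProperties}) all other critical points lie in $L_{\{0\}}\cup L_{\{1/2\}}$, so $f$ maps $L_{\{t\}}$ homeomorphically to $L_{\{2t\}}$ for $t\in(0,1/2)\cup(1/2,1)$. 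Without this, the ``dyadic rearrangements'' need not be well defined; the paper's Remark following Theorem~\ref{thm:ContainsF} shows the analogous claim genuinely fails for local degree $3$. You would also need to check, as the paper does via Lemma~\ref{lem:ProjectionProperties}(1), that the finitely many canonical pieces agree on the closures of their domains so that $g_0$ and $g_1$ are actually continuous.
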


This theorem is proven in Section~\ref{sec:JuliaSetsInfinitelyManyQuasi}. Note that part (1) of this theorem immediately implies Theorem~\ref{thm:Theorem1}, since a unicritical polynomial is postcritically finite and hyperbolic if and only if its critical point is periodic.

Theorem~\ref{thm:BigTheorem} is actually much more general than it seems.  In particular, \mbox{McMullen} proved that any connected Julia set $J_f$ for a hyperbolic rational map $f$ is quasisymmetrically equivalent to the Julia set $J_g$ for some postcritically finite hyperbolic rational map~$g$ (see~\cite[Theorem~3.4]{McMullen} or \cite[Corollary~7.36]{BraFag}).  For example, if $f(z)=z^2+c$ is a hyperbolic quadratic with connected Julia set, then $g(z) = z^2+c'$, where $c'$ is the ``center'' of the hyperbolic component of the Mandelbrot set that contains~$c$ (see~\cite{MilnorHyperbolic}). Thus Theorem~\ref{thm:BigTheorem} applies to a large class of hyperbolic polynomials.

To be precise, the quasisymmetry that McMullen defined  is known as a ``stable conjugacy'', and this preserves the mapping properties of the Fatou components.  Here the \newword{Fatou set} for a rational map $f$ is the complement of the Julia set, and the \newword{Fatou components} are the connected components of the Fatou set, which must be homeomorphic to open disks if $J_f$ is connected. If $U$ is a Fatou component then so is $f(U)$, with the degree of the mapping from $U$ to $f(U)$ determined by the total multiplicity of the critical points contained in $U$.  McMullen's stable conjugacy induces a bijection between the Fatou components of $f$ and the Fatou components of $g$, and this bijection preserves the mappings of the Fatou components as well as the mapping degrees.  Thus part (1) of the Theorem~\ref{thm:BigTheorem} applies to any hyperbolic polynomial with connected Julia set whose critical points all lie in a single Fatou component, and similarly part (2) applies to many hyperbolic polynomials that are not postcritically finite. Both parts\footnote{Note that if $f$ is a quadratic with periodic critical point then the critical value for $f$ must be a leaf of the Hubbard tree. This is because at least one point in the critical cycle must be a leaf, and the critical value must have smallest valence of all the points in the critical cycle.} apply to any hyperbolic quadratic polynomial with connected Julia set, which establishes Theorem~\ref{thm:Theorem2}.

We prove the two statements of Theorem \ref{thm:BigTheorem} independently, with statement~(1) proven in Section~\ref{subsec:Unicritical} and statement~(2) proven in Section~\ref{subsec:quasisymmetrygroupscontainingF}.  For statement (1), we observe that rotation by $2\pi/d$ {at the critical point is a quasisymmetry, producing the $\mathbb{Z}_d$ component of the free product.  Further, when the critical point is not fixed, there exists a fixed point which is the landing point of $n \geq 2$ external rays; the external rays cuts the Julia set into $n$ homeomorphic pieces, and rotation among these pieces gives the $\mathbb{Z}_n$ component of the free product.  For statement (2), we show that the action of $F$ on the boundary of the Fatou component containing the given leaf of the Hubbard tree extends to an action of $F$ on the whole Julia set by piecewise canonical homeomorphisms. 

\subsubsection*{Cubic polynomials}
Theorem~\ref{thm:BigTheorem} also has consequences for Julia sets of hyperbolic cubic polynomials. In \cite{MilnorCubic}, Milnor gives a classification of hyperbolic cubic polynomials into four types:
\begin{enumerate}
    \item[A.] (Adjacent) All critical points lie in the same Fatou component.\smallskip
    \item[B.] (Bitransitive) The critical points are distinct and lie in two different Fatou components from the same periodic cycle of Fatou components.\smallskip
    \item[C.] (Capture) The critical points are distinct and only one lies in a periodic Fatou component.\smallskip
    \item[D.] (Disjoint) The critical points are distinct and lie in two different Fatou components from two different periodic cycles of Fatou components.
\end{enumerate}
If $f$ is a hyperbolic cubic polynomial of type~A or type~D with connected Julia set, then it follows from Theorem~\ref{thm:BigTheorem} that $J_f$ has infinitely many quasisymmetries.  In particular, by McMullen's theorem we may assume $f$ is postcritically finite.  If $f$ has type~A then it is unicritical and hence has infinitely many quasisymmetries by Theorem~\ref{thm:BigTheorem}(1). If $f$ has type~D, then $f$ has two critical points that lie in two different periodic cycles, so any leaf of the corresponding Hubbard tree must lie in one of these cycles, and therefore $J_f$ has infinitely many quasisymmetries by Theorem~\ref{thm:BigTheorem}(2).

As for types B and C, there are certainly examples of Julia sets for cubic polynomials of these types that have infinitely many quasisymmetries.  For instance, the polynomial $-\tfrac34(z^3-3z)+\tfrac12$ has type~C, and the quasisymmetry group for its Julia set contains Thompson's group $F$ by Theorem~\ref{thm:BigTheorem}(2).  Similarly, the polynomial $\tfrac12 (z^3-3z)$ has type~B, but has the same second iterate, and hence the same Julia set, as its negative $-\tfrac12 (z^3-3z)$;  the negative has type~D, so this Julia set has infinitely many quasisymmetries by Theorem~\ref{thm:BigTheorem}(2).

However, there are also examples of Julia sets of cubic polynomials of types B and C for which our methods can be used to construct only finitely many quasisymmetries, as shown in the following examples.

\begin{figure}
\centering
\includegraphics{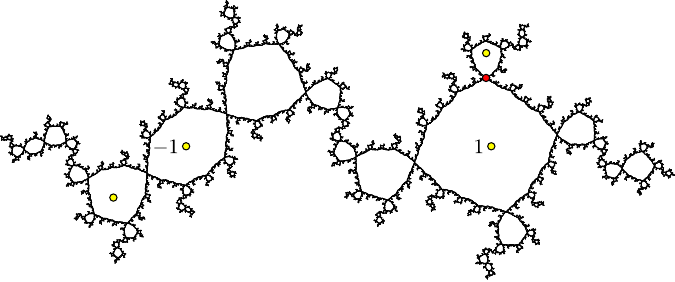}
\caption{The Julia set $J_f$ for the cubic polynomial from Example~\ref{ex:TypeBCubic}. 
The four points of the critical cycle are shown in yellow, and the period-two point $p$ is shown in red.}
\label{fig:CubicExample2}
\end{figure}
\begin{example}[A cubic of type~B]\label{ex:TypeBCubic}
Figure~\ref{fig:CubicExample2} shows the Julia set $J_f$ for a postcritically finite, hyperbolic cubic polynomial $f(z)=a(z^3-3z)+b$ of type~B, where $a\approx 0.4916+0.1527i$ and $b\approx -0.0168+0.3054i$. The two critical points $\pm 1$ for this polynomial lie in the same $4$-cycle, with $f(1)=-1$ and $f^3(-1)=1$.

This Julia set $J_f$ seems to have very few quasisymmetries. There is an order-two quasisymmetry of $J_f$ that fixes the period-two point $p\approx 0.9655+0.4479i$ (shown in red in Figure~\ref{fig:CubicExample2}) and switches the two components of $J_f\setminus\{p\}$. In the terminology of Section~\ref{subsec:Unicritical}, this is because $p$ is a rotational fixed point for $f^2$. As far as we know, this is the only nontrivial quasisymmetry of~$J_f$.  
\end{example}

\begin{figure}
\centering
\includegraphics{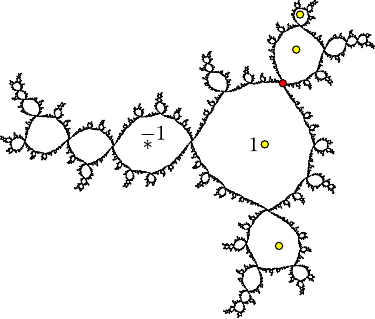}
\caption{The Julia set $J_f$ for the cubic polynomial from Example~\ref{ex:TypeCCubic}. 
The four postcritical points are shown in yellow, and the fixed point $p$ is shown in red.}
\label{fig:CubicExample}
\end{figure}
\begin{example}[A cubic of type~C]\label{ex:TypeCCubic} Figure~\ref{fig:CubicExample} shows the Julia set $J_f$ for a postcritically finite, hyperbolic cubic polynomial $f(z)=a(z^3-3z)+b$ of type~C, where $a\approx 0.0163+0.1498i$ and $b\approx 1.5704+1.9182i$. This polynomial has critical points at $1$ and $-1$, where $f^3(-1)=1$ and $1$ has period two.

Again, this Julia set seems to have very few quasisymmetries.  Specifically, there is an order-two quasisymmetry that fixes the rotational fixed point $p\approx 1.3090+1.0433i$, and as far as we know this is the only nontrivial quasisymmetry of~$J_f$.
\end{example}

\subsection{Open questions}

Our work raises several questions.  First, recall that a rational map $f$ is \newword{subhyperbolic} if the forward orbit of each critical point is either finite or converges to an attracting cycle (see~\cite[Section~19]{Milnor}).  For example, Misiurewicz points in the Mandelbrot set correspond to quadratic polynomials which are subhyperbolic but not hyperbolic, and the corresponding Julia sets are dendrites.

\begin{question}\label{question:subhyperbolic}Do the analogs of Theorems \ref{thm:JuliaSetUndistorted} and \ref{thm:PiecewiseCanonicalQuasisymmetries} hold for subhyperbolic rational maps?  That is, is the restriction of the Euclidean metric in the subhyperbolic case undistorted, and are piecewise-canonical homeomorphisms always quasisymmetries?
\end{question}

A positive answer to this question would yield the theorem that the Julia set corresponding to any Misiurewicz point in the Mandelbrot set has infinitely many quasisymmetries.  Note that a subhyperbolic map is expanding on a neighborhood of its Julia set except at finitely many cone points, so much of the proof of Theorem~\ref{thm:JuliaSetUndistorted} should go through.  It follows from the results of Matteo Tarocchi in \cite{Tarocchi} that certain dendrites admit an infinite group of quasisymmetries which is dense in the homeomorphism group, but it is unclear whether any dendrite Julia sets are quasisymmetrically equivalent to these. 

The following question highlights a particularly interesting example of a subhyperbolic map.

\begin{question}\label{qu:SierpinskiTriangle}
Let $f\colon \Chat\to\Chat$ be the map $f(z) = \bigl(z^3-\frac{16}{27}\bigr)\bigr/z$, whose Julia set $J_f$ is homeomorphic to a Sierpi\'nski triangle~\cite{Kam}.  Is $J_f$ quasisymmetrically equivalent to the standard Sierpi\'nski triangle?
\end{question}

Since the map $f$ in this question is subhyperbolic, a positive answer to Question~\ref{question:subhyperbolic} would combine with Corollary~\ref{cor:quasisymrigid} to give a positive answer to Question~\ref{qu:SierpinskiTriangle} as well. 

Though our results cover a large class of hyperbolic polynomials, the following question remains.

\begin{question}
Are there any hyperbolic polynomials $f$ with connected Julia set $J_f$ for which $J_f$ has only finitely many quasisymmetries?  In particular, does this hold for the cubic polynomials in Examples~\ref{ex:TypeBCubic} and~\ref{ex:TypeCCubic}?
\end{question}

Finally, in the case of the basilica, Neretin has shown that the basilica Thompson group defined in \cite{BeFo1} is uniformly dense in the group of orientation-preserving homeomorphisms~\cite{Neretin}.  This raises the following question.

\begin{question}
If $f$ is a hyperbolic polynomial with connected Julia set $J_f$, does the group of orientation-preserving homeomorphisms of $J_f$ always have a finitely generated, uniformly dense subgroup?
\end{question}

For example, one might hope to obtain such a subgroup by choosing some canonical branch cut and then proving that the resulting rearrangement group is dense and finitely generated (see \cite{BeFo2}), though as of yet there are few general theorems regarding the finiteness properties of rearrangement groups of Julia sets. Note that Lyubich and Merenkov have also shown that the basilica Thompson group is dense in the group of orientation-preserving quasisymmetries of the basilica in a certain quantitative sense~\cite{LyMe}, so one can similarly ask whether this result can be extended to other Julia sets.

\numberwithin{theorem}{section}

\section{Quasisymmetries of Finitely Ramified  Fractals}

In this section we develop a theory of quasisymmetries for finitely ramified fractals with respect to undistorted metrics. We prove that the set of undistorted metrics on a finitely ramified fractal is a quasisymmetry class in Section~\ref{sec:CompatibleMetricProof}.  In Section~\ref{subsec:ProofThm1-2}, we characterize the finitely ramified fractals for which undistorted metrics exist.  Section~\ref{subsec:ExistUndistorted} develops a sufficient condition to establish that a metric on a finitely ramified fractal is undistorted. Lastly, Section~\ref{subsec:PiecewiseCellularAreQuasisymmetries} demonstrates a standard method to create quasisymmetries between finitely ramified fractals equipped with undistorted metrics.

\subsection{Equivalence of undistorted metrics}
\label{sec:CompatibleMetricProof}

We begin with a proof of our main theorem regarding undistorted metrics.

{
\renewcommand{\thetheorem}{\ref{thm:CompatibleMetric}}
\begin{theorem}\textCompaibleMetric
\end{theorem}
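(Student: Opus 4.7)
The plan is to prove both parts through a combinatorial scale on pairs of distinct points in $X$ and a comparability lemma showing that, up to multiplicative constants depending only on the undistorted data, the distance $d(a, b)$ is comparable to the diameter of a cell at this scale. For distinct $a, b \in X$, let
\[
\ell(a, b) := \max\{n \in \N : \text{some } n\text{-cell contains both } a \text{ and } b\},
\]
which is finite by condition~(4) of Definition~\ref{def:finitelyramifiedcellstructure}, and let $k(a, b) \geq 1$ be the smallest integer for which some $\ell(a, b)$-cell $E$ containing both points has disjoint $(\ell(a, b) + k(a, b))$-cell descendants $E_a \ni a$ and $E_b \ni b$. This $k(a, b)$ is finite because any two descendant cells of diameter less than $d(a, b)/3$ containing $a$ and $b$ respectively must be disjoint by the triangle inequality. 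Set $n(a, b) := \ell(a, b) + k(a, b)$; these are combinatorial invariants depending only on the cell structure.

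The central comparability lemma I would establish asserts that any $(r, R, C, \delta)$-undistorted metric $d$ satisfies
\[
\delta\, \diam(E_a) \leq d(a, b) \leq \frac{C(1 + C)}{r}\, \diam(E_a)
\]
for any distinct $a, b \in X$ and any $n(a, b)$-cell $E_a$ containing $a$. The lower bound is cell separation. For the upper bound: if $k(a, b) = 1$, then $d(a, b) \leq \diam(E) \leq (C/r)\diam(E_a)$ by parent-to-child exponential decay. If $k(a, b) \geq 2$, minimality of $k$ forces every pair of cells in $E$ containing $a$ and $b$ respectively at level $n(a, b) - 1$ to meet at a common point $x$; same-level exponential decay bounds $\diam(E_b^{(k-1)}) \leq C\, \diam(E_a^{(k-1)})$, and the triangle inequality through $x$ together with parent-to-child decay yields $d(a, b) \leq C(1+C)/r \cdot \diam(E_a)$. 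Since any two $n(a, b)$-cells containing $a$ meet at $a$, same-level exponential decay makes the estimate independent of the choice of $E_a$.

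For part~(1), let $d_1, d_2$ be undistorted metrics and $a, b, c \in X$ be distinct. Pick an $n(a, b)$-cell $F_{ab} \ni a$ and an $n(a, c)$-cell $F_{ac} \ni a$; they meet at $a$, so exponential decay in each metric controls $\diam_i(F_{ab})/\diam_i(F_{ac})$ in terms of $n(a, b) - n(a, c)$. Combined with the comparability lemma, a bound $d_1(a, b)/d_1(a, c) \leq t$ forces $n(a, b) - n(a, c) \geq \alpha \log(1/t) - \beta$ for explicit constants $\alpha, \beta$, which in turn yields $d_2(a, b)/d_2(a, c) \leq K t^\gamma$ for explicit $K$ and $\gamma > 0$ determined by the rate constants of the two metrics. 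Symmetry in the roles of $b$ and $c$ handles the regime $t \geq 1$, producing a quasisymmetry control function $\eta$ on all of $[0, \infty)$.

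For part~(2), assume $d$ is undistorted and $d'$ is quasisymmetric to $d$ via an $\eta$-quasisymmetry identity map. The standard distortion estimate $\diam_{d'}(A)/\diam_{d'}(B) \leq 2\eta(2\diam_d(A)/\diam_d(B))$ for bounded $A, B \subseteq X$ with $A \cap B \neq \emptyset$ transfers exponential decay from $d$ to $d'$ with modified constants when applied to intersecting cells. For cell separation, given disjoint $n$-cells $E_1, E_2$, choose $a, c \in E_1$ nearly realizing $\diam_{d'}(E_1)$ and $b \in E_2$ nearly realizing $d'(E_1, E_2)$; the quasisymmetry inequality on the triple $(a, b, c)$ combined with $d(a, c)/d(a, b) \leq 1/\delta$ from $d$-cell separation yields $d'(a, c) \leq \eta(1/\delta)\, d'(a, b)$, which gives cell separation for $d'$. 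The principal technical obstacle is the upper bound in the comparability lemma when $k(a, b) \geq 2$, which requires delicately combining parent-to-child and same-level exponential decay with the minimality of $k(a, b)$; the remainder of the argument assembles from standard quasisymmetric tools.
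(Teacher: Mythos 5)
Your part (1) is essentially the paper's argument: your scale $n(a,b)$ plays the role of the covering-pair level, your comparability lemma is the analogue of Lemma~\ref{lem:CoveringPairsNewNew}, and comparing $F_{ab}$ with $F_{ac}$ through exponential decay (both contain $a$, hence intersect) is exactly how the paper produces $\eta$ as a maximum of two power functions. One caveat: ``symmetry in the roles of $b$ and $c$'' does not literally dispose of the regime $n(a,b)\leq n(a,c)$, since the quasisymmetry estimate applied to the swapped triple yields a \emph{lower} bound on $d_2(a,b)/d_2(a,c)$, not the required upper bound; what is needed is the mirrored computation with the deeper and shallower cells exchanged, which gives a second power function (exponent $\log(1/r)/\log(1/R)$ instead of $\log(1/R)/\log(1/r)$), after which one takes the maximum, as the paper does.

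The genuine gap is in part (2), in the assertion that the distortion estimate ``transfers exponential decay from $d$ to $d'$ with modified constants when applied to intersecting cells.'' Applying $\diam'(E')/\diam'(E)\leq 2\,\eta\bigl(2\diam(E')/\diam(E)\bigr)$ to an $m$-cell $E$ and an $n$-cell $E'$ with $n-m$ large gives only the bound $2\,\eta(2CR^{n-m})$, and for an arbitrary control homeomorphism $\eta$ this need not decay exponentially in $n-m$ (if $\eta(t)$ is comparable to $1/\log(1/t)$ near $0$, the bound decays only like $1/(n-m)$); the lower bound degenerates in the same way. So the exponential decay condition for $d'$ does not follow from this step as stated. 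This is precisely why the paper proves the local characterization of undistortedness (Lemma~\ref{lem:EquivalentExponentialDecay}): each of its conditions compares only cells a bounded number of levels apart, hence transfers through $\eta$ with uniform constants, and the lemma then reassembles exponential decay; alternatively one could invoke the fact that a quasisymmetry of a connected (hence uniformly perfect) space admits a control function of power type, but your sketch uses neither idea. A second, smaller slip: with $a,c\in E_1$ realizing $\diam'(E_1)$ and $b\in E_2$ realizing $d'(E_1,E_2)$, the quantity $d'(a,b)$ need not be comparable to $d'(E_1,E_2)$ (it can be as large as $\diam'(E_1)+d'(E_1,E_2)$), so the inequality $d'(a,c)\leq\eta(1/\delta)\,d'(a,b)$ is vacuous once $\eta(1/\delta)\geq 1$. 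The repair is to base the quasisymmetry inequality at a point $p_1\in E_1$ (nearly) realizing $d'(E_1,E_2)$: for every $x\in E_1$ one gets $d'(p_1,x)\leq\eta(1/\delta)\,d'(p_1,p_2)$, hence $\diam'(E_1)\leq 2\,\eta(1/\delta)\,d'(E_1,E_2)$, which in fact gives cell separation for all disjoint $n$-cells, not only those with intersecting parents.
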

\addtocounter{theorem}{-1}
}

If $X$ is a finitely ramified fractal and $(x,y)$ is a pair of distinct points in~$X$, there exists a maximum integer $n$ so that $x$ and $y$ lie an in intersecting pair of $n$-cells $(E_x,E_y)$ with $x \in E_x$ and $y \in E_y$. We refer to such a pair of $n$-cells as a \newword{covering pair} for $(x,y)$.  Note that $E_x$ and $E_y$ need not be distinct. Note also that $E_x$ and $E_y$ are uniquely determined if $x$ and $y$ lie in their interiors, but if $x$ or $y$ is a boundary point of an $n$-cell then the associated covering pair is not necessarily unique.

\begin{lemma}\label{lem:CoveringPairsNewNew}
Let $X$ be a finitely ramified fractal, and let $d$ be an undistorted metric on $X$.  Then there exists a constant $\alpha\geq 1$ such that for every pair $(x,y)$ of distinct points in $X$ and every covering pair $(E_x,E_y)$ for~$(x,y)$, we have
\[
\frac{1}{\alpha} \leq \frac{d(x,y)}{\diam(E_x)} \leq \alpha.
\]
\end{lemma}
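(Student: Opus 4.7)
The plan is to obtain a two-sided bound on $d(x,y)/\diam(E_x)$ using the two defining properties of an undistorted metric, with the upper bound coming from an estimate at level $n$ and the lower bound coming from a comparison at level $n+1$.

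For the upper bound I would first pick any point $z\in E_x\cap E_y$, which exists by the definition of a covering pair. Since $x,z\in E_x$ and $y,z\in E_y$, the triangle inequality gives $d(x,y)\leq \diam(E_x)+\diam(E_y)$. Because $E_x$ and $E_y$ are intersecting $n$-cells, the exponential decay condition applied with $m=n$ yields $\diam(E_y)\leq C\,\diam(E_x)$, so $d(x,y)\leq (1+C)\diam(E_x)$.

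For the lower bound I would exploit the maximality of $n$. Since $E_x$ is the union of its children and has nonempty interior, there is at least one child $E'_x$ of $E_x$ containing $x$; similarly there is a child $E'_y$ of $E_y$ containing $y$. If $E'_x\cap E'_y\neq \emptyset$, then $(E'_x,E'_y)$ would be a covering pair for $(x,y)$ at level $n+1$, contradicting the maximality of $n$. Hence $E'_x$ and $E'_y$ are disjoint $(n+1)$-cells, and the cell separation condition gives $d(x,y)\geq d(E'_x,E'_y)\geq \delta\,\diam(E'_x)$. Applying exponential decay to $E_x\supseteq E'_x$ (so that they intersect) gives $\diam(E'_x)\geq (r/C)\diam(E_x)$, hence $d(x,y)\geq (\delta r/C)\diam(E_x)$.

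Taking $\alpha=\max\{1+C,\; C/(\delta r)\}$ then completes the proof. I do not expect a serious obstacle here; the only points worth verifying carefully are the edge case $E_x=E_y$ (still handled by the children argument, since any two children containing $x$ and $y$ respectively must be disjoint by maximality) and the fact that every cell has at least one child, which follows from conditions (1) and (2) of Definition~\ref{def:finitelyramifiedcellstructure} since an empty union of children cannot have nonempty interior.
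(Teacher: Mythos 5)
Your proof is correct and follows essentially the same route as the paper's: the same triangle-inequality estimate with $\diam(E_y)\leq C\,\diam(E_x)$ for the upper bound, the same passage to disjoint children $E_x',E_y'$ combined with cell separation and exponential decay for the lower bound, and even the same constant $\alpha=\max\bigl(1+C,\,C/(\delta r)\bigr)$. The extra remarks about the case $E_x=E_y$ and the existence of children are fine but not needed beyond what the paper leaves implicit.
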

\begin{proof}
Suppose $d$ is $(r,R,C,\delta)$-undistorted.  Let $(x,y)$ be a pair of distinct points in~$X$ with covering pair $(E_x,E_y)$ of $n$-cells.  Then $\diam(E_y)\leq C\diam(E_x)$, so
\[
d(x,y) \leq \diam(E_x)+\diam(E_y) \leq (1+C)\diam(E_x).
\]
For the opposite inequality,
let $E_x'$ be a child of $E_x$ that contains~$x$, and let $E_y'$ be a child of $E_y$ that contains~$y$.  Then $E_x'$ and $E_y'$ are disjoint, so
\[
d(x,y) \geq d(E_x',E_y') \geq \delta\, \diam(E_x') \geq \frac{\delta r}{C} \diam(E_x).
\]
Thus $\alpha=\max\bigl(1+C,C/(\delta r)\bigr)$ suffices.
\end{proof}

\begin{proof}[Proof of Theorem~\ref{thm:CompatibleMetric}, Part (1)]
Let $X$ be a finitely ramified fractal and let $d$ and $d'$ be undistorted metrics on $X$. We wish to prove that $d$ and $d'$ are quasi-equivalent.  For any set $S\subseteq X$, let $\diam(S)$ denote its diameter with respect to~$d$, and let $\diam'(S)$ denote its diameter with respect to~$d'$. Taking minimums and maximums as appropriate, we can find constants $0<r<R<1$, $C\geq 1$, and $\delta> 0$ so that both $d$ and $d'$ are $(r,R,C,\delta)$-undistorted.  We can also find a single constant $\alpha\geq 1$ that satisfies Lemma~\ref{lem:CoveringPairsNewNew} for both $d$ and~$d'$.

Let $x,a,b\in X$ be distinct points.  Let $(E_{x,1},E_a)$ be a covering pair for $(x,a)$, let $(E_{x,2},E_b)$ be a covering pair for~$(x,b)$, and let $n_1$ and $n_2$ be the respective levels of these pairs.  Note that $E_{x,1}$ and $E_{x,2}$ intersect since they both contain~$x$.  If $n_1 \geq n_2$, it follows that
\[
\frac{d'(x,a)}{d'(x,b)} \leq \frac{\alpha\, \diam'(E_{x,1})}{\alpha^{-1}\diam'(E_{x,2})} \leq \alpha^2 C R^{n_1-n_2}
\]
and 
\[
\frac{d(x,a)}{d(x,b)} \geq \frac{\alpha^{-1}\diam(E_{x,1})}{\alpha\diam(E_{x,2})} \geq
\frac{r^{n_1-n_2}}{\alpha^2 C}
\]
so
\[
\frac{d'(x,a)}{d'(x,b)} \leq \eta_1\biggl(\frac{d(x,a)}{d(x,b)}\biggr)
\]
where $\eta_1\colon [0,\infty)\to [0,\infty)$ is the homeomorphism 
\[
\eta_1(t)=\alpha^2 C\bigl(\alpha^2C t\bigr)^{\log(R)/\log(r)}.
\]
If instead $n_1 \leq n_2$, then
\[
\frac{d'(x,a)}{d'(x,b)} \leq \frac{\alpha \diam'(E_{x,1})}{\alpha^{-1}\diam'(E_{x,2})} \leq \frac{\alpha^2C}{r^{n_2-n_1}}
\]
and
\[
\frac{d(x,a)}{d(x,b)} \geq \frac{\alpha^{-1}\diam(E_{x,1})}{\alpha\diam(E_{x,2})} \geq
\frac{1}{\alpha^2 CR^{n_2-n_1}}
\]
so
\[
\frac{d'(x,a)}{d'(x,b)} \leq \eta_2\biggl(\frac{d(x,a)}{d(x,b)}\biggr)
\]
where $\eta_2\colon [0,\infty)\to [0,\infty)$ is the homeomorphism 
\[
\eta_2(t)=\alpha^2 C\bigl(\alpha^2C t\bigr)^{\log(r)/\log(R)}.
\]
We conclude that
\[
\frac{d'(x,a)}{d'(x,b)} \leq \eta\biggl(\frac{d(x,a)}{d(x,b)}\biggr)
\]
for all triples of distinct points $a,b,x\in X$, where $\eta\colon [0,\infty)\to [0,\infty)$ is the homeomorphism $\eta(t)=\max(\eta_1(t),\eta_2(t))$.
\end{proof}

For the proof of part (2) of Theorem~\ref{thm:CompatibleMetric}, we need a couple of lemmas that will also be useful later.  The first is a subtly powerful characterization of undistorted metrics.  Because the cell separation condition requires comparing the diameter of an $n$-cell to the diameter of every other $n$-cell, the definition of an undistorted metric is inherently global in nature. On the other hand, the characterization below involves only local comparisons, comparing the diameter of a cell $E \in \E_n$ to the diameters of cells contained $n-1$ cells intersecting the parent of $E$.

\begin{lemma}\label{lem:EquivalentExponentialDecay}Let $X$ be a finitely ramified fractal.  A continuous metric $d$ on $X$ is undistorted if and only if it satisfies the following conditions:
\begin{enumerate}
    \item There exists a constant $\lambda \geq 1$ so that
    \[
    \frac{\diam(E_1)}{\diam(E_2)} \leq \lambda
    \]
    for any two $n$-cells $E_1$ and $E_2$ that intersect.\smallskip
    \item There exists a constant $\mu > 0$ so that
    \[    \diam(E') \geq \mu \diam(E)
    \]
    for any $n$-cell $E$ and any $(n+1)$-cell $E'$ contained in~$E$.\smallskip
    \item There exist constants $k\in\mathbb{N}$ and\/ $0<\nu<1$ so that
    \[
    \diam(E') \leq \nu \diam(E)
    \]
    for any $n$-cell $E$ and any $(n+k)$-cell $E'$ contained in~$E$.\smallskip
    \item There exists a constant $\delta>0$ so that
\[
d(E_1,E_2)\geq \delta \diam(E_1)
\]
for every pair of disjoint $n$-cells $E_1,E_2$ whose parents intersect.
\end{enumerate}
\end{lemma}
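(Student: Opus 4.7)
The forward direction is immediate. If $d$ is $(r,R,C,\delta)$-undistorted, then (1) holds with $\lambda = C$ (the $n=m$ case of exponential decay); (2) holds with $\mu = r/C$ (the case $n-m=1$); (3) holds for any $k$ chosen large enough that $\nu := CR^k < 1$; and (4) is a direct specialization of cell separation with the same $\delta$. The real content of the lemma is the converse, so the plan is to derive the two global conditions of Definition~\ref{def:undistorted} from their local versions (1)--(4) via an ancestor-chain argument.

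For exponential decay, fix an $m$-cell $E$ and an $n$-cell $E'$ with $n \geq m$ and $E \cap E' \neq \emptyset$. Let $F_m \supseteq F_{m+1} \supseteq \cdots \supseteq F_n = E'$ be the chain of ancestors of $E'$ in the cell tree, where each $F_{j+1}$ is the child of $F_j$ containing $E'$. Since $E' \subseteq F_m$ meets $E$, the $m$-cells $F_m$ and $E$ intersect, so (1) gives $\lambda^{-1}\diam(E) \leq \diam(F_m) \leq \lambda\diam(E)$. Iterating (3) over successive blocks of $k$ levels yields
\[
\diam(E') \leq \nu^{\lfloor (n-m)/k\rfloor}\diam(F_m) \leq \nu^{-1} R^{n-m} \diam(F_m), \qquad R := \nu^{1/k},
\]
and iterating (2) at each of the $n-m$ steps yields $\diam(E') \geq \mu^{n-m}\diam(F_m)$. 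Combining these with the bounds on $\diam(F_m)/\diam(E)$ gives exponential decay with $r := \mu$, the $R$ above, and $C := \lambda/\nu$.

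For cell separation, let $E_1, E_2$ be disjoint $n$-cells, and for each $i$ let $E_i^{(j)}$ denote the level-$j$ ancestor of $E_i$. Choose the largest index $j$ for which $E_1^{(j)} \cap E_2^{(j)} \neq \emptyset$; this is well defined since both ancestors at level $0$ equal $X$, and $j < n$ because $E_1$ and $E_2$ are disjoint. Then $E_1^{(j+1)}$ and $E_2^{(j+1)}$ are disjoint while their parents intersect, so (4) gives $d(E_1^{(j+1)}, E_2^{(j+1)}) \geq \delta \diam(E_1^{(j+1)})$. Since $E_i \subseteq E_i^{(j+1)}$ and diameter is monotone under inclusion, we conclude $d(E_1, E_2) \geq \delta \diam(E_1)$, as required.

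The main technical care lies in the ancestor-chain argument for exponential decay: the level-$m$ ancestor $F_m$ of $E'$ need not coincide with $E$ but only intersects it, so condition (1) is essential for transferring diameter estimates between $F_m$ and $E$. Once that is set up, the block-of-$k$ telescoping converts the ``per $k$ steps'' decay in (3) into honest exponential decay, and the cell-separation step reduces to (4) as soon as the correct ancestor level is located, since passing to subcells can only shrink diameters.
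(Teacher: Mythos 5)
Your argument is correct and follows essentially the same route as the paper's: the forward direction by specializing the constants (choosing $k$ with $CR^k<1$ for condition~(3)), the exponential decay bounds via the chain of ancestors of $E'$ with per-level use of (2) and block-of-$k$ telescoping of (3) against an intersecting level-$m$ cell, and cell separation by passing to the deepest level at which the ancestors of $E_1$ and $E_2$ still intersect and applying (4) one level below. The only detail you leave implicit is the normalization $0<r\leq R<1$ required by Definition~\ref{def:undistorted}, which is harmless since $r=\mu$ may always be decreased; the paper likewise notes this is automatic.
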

\begin{proof}
We will prove that conditions (1), (2), and (3) are together equivalent to the exponential decay condition, while condition (4) is equivalent to the cell separation condition.

If $d$ satisfies the exponential decay condition with constants $(r,R,C)$, then we can choose a $k\in \mathbb{N}$ so that $CR^k<1$, in which case $d$ satisfies conditions (1), (2), and (3) with $\lambda = C$, $\mu=r/C$, and $\nu=CR^k$.

Conversely, suppose $d$ satisfies (1), (2), and (3) with constants $(\lambda,\mu,k,\nu)$, and let $E$ be an $m$-cell and $E'$ an $n$-cell that intersect, where $m\leq n$. For the lower bound, let $E_0'$ be an $m$-cell containing $E'$.  Then
\[
\frac{\diam(E')}{\diam(E)} \geq \frac{\mu^{n-m}\diam(E_0')}{\diam(E)} \geq \frac{\mu^{n-m}}{\lambda}.
\]
For the upper bound, let $q$ be an integer so that $qk\leq n-m < (q+1)k$, and let $E_1'$ be an $(m+qk)$-cell so that $E'\subseteq E_1'\subseteq E_0'$.  Then
\[
\frac{\diam(E')}{\diam(E)} \leq \frac{\diam(E_1')}{\diam(E)} \leq \frac{\nu^q \diam(E_0')}{\diam(E)} \leq \nu^q \lambda
\leq \frac{\lambda}{\nu}\bigl(\nu^{1/k}\bigr)^{n-m}.
\]
Thus $d$ satisfies the inequality for the exponential decay condition with constants $r=\mu$, $R=\nu^{1/k}$, and $C=\max(\lambda,\lambda/\nu)=\lambda/\nu$.  Note that $R<1$ since $\nu < 1$, and it follows automatically that $r\leq R$.

Finally, any metric that satisfies the cell separation condition for some constant $\delta>0$ clearly satisfies condition~(4).  For the converse, suppose $d$ satisfies condition~(4) for some constant $\delta>0$, and let $E_1$ and $E_2$ be any two disjoint $n$-cells. Then there exists a maximum integer $m$ so that $E_1\subseteq E_1'$ and $E_2\subseteq E_2'$ for some $m$-cells $E_1'$ and $E_2'$ that intersect.  Let $E_1''$ and $E_2''$ be $(m+1)$-cells so that $E_1\subseteq E_1''\subseteq E_1'$ and $E_2\subseteq E_2''\subseteq E_2'$.  Then $E_1''$ and $E_2''$ are disjoint but their parents intersect, so
\[
d(E_1,E_2) \geq d(E_1'',E_2'') \geq \delta \diam(E_1'') \geq \delta \diam(E_1).\qedhere
\]
\end{proof}

The following basic lemma about quasisymmetries will also be useful later.
  
\begin{lemma}\label{lem:DiameterRatioGeneral}
Let $X$ and $Y$ be metric spaces, and let $h \colon X \to Y$ be an $\eta$\mbox{-}quasi\-symmetry for some homeomorphism $\eta\colon[0,\infty)\to[0,\infty)$. If $S, T \subseteq X$ and $S \cap T \ne \emptyset$, then
\[
\frac{\diam(h(S))}{\diam(h(T))} \leq 2\,\eta\biggl(2\frac{\diam(S)}{\diam(T)}\biggr).
\]
\end{lemma}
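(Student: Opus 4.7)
The plan is to use a shared point $p \in S\cap T$ as the common basepoint in the quasisymmetry inequality, then bound $\diam(h(S))$ and $\diam(h(T))$ separately through $p$. First I would observe that, by the triangle inequality through $h(p)$,
\[
\diam(h(S)) \leq 2 \sup_{s\in S} d_{\scriptscriptstyle Y}\bigl(h(s), h(p)\bigr),
\]
so it suffices to bound the right-hand supremum. For any $s \in S \setminus \{p\}$ and $t \in T \setminus \{p\}$, the $\eta$-quasisymmetry condition applied to the triple $(p, s, t)$ gives
\[
d_{\scriptscriptstyle Y}\bigl(h(s), h(p)\bigr) \leq \eta\biggl(\frac{d_{\scriptscriptstyle X}(p, s)}{d_{\scriptscriptstyle X}(p, t)}\biggr)\, d_{\scriptscriptstyle Y}\bigl(h(t), h(p)\bigr).
\]
Using monotonicity of $\eta$ together with $s, p \in S$ and $t, p \in T$, the right-hand side is at most $\eta\bigl(\diam(S)/d_{\scriptscriptstyle X}(p,t)\bigr)\, \diam(h(T))$.

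It then remains to choose $t \in T$ so that $d_{\scriptscriptstyle X}(p, t)$ approximates $\diam(T)$. Since $p$ may lie in the interior of $T$, we cannot expect equality; but given $\varepsilon > 0$ I can pick near-diametral $t_1, t_2 \in T$ with $d_{\scriptscriptstyle X}(t_1, t_2) \geq \diam(T) - \varepsilon$, and the triangle inequality then forces at least one of them, call it $t$, to satisfy $d_{\scriptscriptstyle X}(p, t) \geq (\diam(T) - \varepsilon)/2$. Substituting into the previous estimate and passing to the limit $\varepsilon \to 0^+$, using continuity of $\eta$, yields the stated inequality. The degenerate cases in which $\diam(S) = 0$ or $\diam(T) = 0$ reduce to one-point sets and can be checked separately.

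The main subtle point, though not really an obstacle, is the forced asymmetry in how the two diameters are approximated. The quasisymmetry inequality demands a common basepoint for the numerator and denominator of its ratio, so $p$ must serve both to control $\diam(h(S))$ via the triangle inequality (contributing the outer factor of $2$) and to approximate $\diam(T)$ from within $T$ (contributing the factor of $2$ inside $\eta$, since $p$ need not be extremal in $T$). Both factors of $2$ are essentially sharp consequences of this constraint, and everything else is routine manipulation of monotone functions.
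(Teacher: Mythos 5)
Your proposal is correct and follows essentially the same route as the paper's proof: fix $p\in S\cap T$, pick $t\in T$ with $d(p,t)$ at least (roughly) $\diam(T)/2$, apply the quasisymmetry inequality to triples $(p,s,t)$, and sum up over $s\in S$ via the triangle inequality through $h(p)$. The only difference is cosmetic: the paper asserts outright that some $t\in T$ satisfies $d(p,t)\geq\diam(T)/2$, whereas you reach the same conclusion by an $\varepsilon$-approximation and a limit, which is if anything slightly more careful when the supremum defining $\diam(T)$ is not attained.
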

\begin{proof}
Fix a point $p \in S\cap T$.  Then there must exist a point $t \in T$ such that $d(p,t)\geq \diam(T)/2$. For any point $s \in S$ it follows that
\[
\frac{d(p,s)}{d(p,t)} \leq \frac{\diam(S)}{\diam(T)/2} = 2\frac{\diam(S)}{\diam(T)}
\]
so
\begin{multline*}
d(h(p),h(s)) = \frac{d(h(p),h(s))}{d(h(p),h(t))}\,d(h(p),h(t)) \\
\leq \eta\biggl(\frac{d(p,s)}{d(p,t)}\biggr)\diam(h(T)) \leq \eta\biggl(2\frac{\diam(S)}{\diam(T)}\biggr) \diam(h(T)).
\end{multline*}
Since this holds for all $s\in S$, it follows that
\[
\diam(h(S)) \leq 2\,\eta\biggl(2\frac{\diam(S)}{\diam(T)}\biggr) \diam(h(T)).\qedhere
\]
\end{proof}

\begin{proof}[Proof of Theorem~\ref{thm:CompatibleMetric}, Part (2)]
Let $X$ be a finitely ramified fractal, let $d$ and $d'$ be metrics on~$X$, where $d$ is $(r,R,C,\delta)$-undistorted, and suppose that the identity map $(X,d)\to (X,d')$ is $\eta$-quasisymmetric for some homeomorphism $\eta\colon [0,\infty)\to[0,\infty)$. We wish to prove that $d'$ is undistorted.  For any set $S\subseteq X$, let $\diam(S)$ and $\diam'(S)$ denote its diameters with respect to $d$ and~$d'$. It suffices to verify that $d'$ satisfies the conditions in Lemma~\ref{lem:EquivalentExponentialDecay}.

For condition~(1) in Lemma~\ref{lem:EquivalentExponentialDecay}, let $E_1$ and $E_2$ be $n$-cells in $X$ that intersect.  Then by Lemma~\ref{lem:DiameterRatioGeneral} we have
\[
\frac{\diam'(E_1)}{\diam'(E_2)} \leq 2\,\eta\biggl(2\frac{\diam(E_1)}{\diam(E_2)}\biggr) \leq 2\,\eta(2C)
\]
so $\lambda=2\,\eta(2C) \geq 1$ suffices.  

For condition~(2) in Lemma~\ref{lem:EquivalentExponentialDecay}, let $E$ be an $n$-cell and $E'$ an $(n+1)$-cell contained in~$E$.  By Lemma~\ref{lem:DiameterRatioGeneral},
\[
\frac{\diam'(E)}{\diam'(E')} \leq 2\,\eta\biggl(2\frac{\diam(E)}{\diam(E')}\biggr) \leq 2\,\eta\biggl(2\frac{C}{r}\biggr)
\]
so $\mu=\bigl(2\,\eta(2C/r)\bigr)^{-1}$ suffices.

For condition~(3) in Lemma~\ref{lem:EquivalentExponentialDecay}, observe that since $\eta\colon [0,\infty)\to[0,\infty)$ is a homeomorphism and $0<R<1$ there exists a $k\in\mathbb{N}$ so that $\eta(2CR^k)<1/2$.  If $E$ is an $n$-cell and $E'$ is an $n+k$ cell that is contained in~$E$, then by Lemma~\ref{lem:DiameterRatioGeneral},
\[
\frac{\diam'(E')}{\diam'(E)} \leq 2\,\eta\biggl(2\frac{\diam(E')}{\diam(E)}\biggr) \leq 2\,\eta(2CR^k) < 1
\]
so $\nu=2\,\eta(2CR^k)$ suffices.

For condition~(4) in Lemma~\ref{lem:EquivalentExponentialDecay}, let $E_1$ and $E_2$ be disjoint $n$-cells whose parents $\hat{E}_1,\hat{E}_2$ intersect.  Choose points $p_1\in E_1$ and $p_2\in E_2$ so that $d'(p_1,p_2)=d'(E_1,E_2)$.  Since 
\[
\frac{\diam(E_2)}{\diam(E_1)} \leq \frac{CR\diam(\hat{E}_2)}{(r/C)\diam(\hat{E}_1)} \leq \frac{C^3R}{r}
\]
it follows from Lemma~\ref{lem:DiameterRatioGeneral} that
\begin{multline*}
\frac{d'(E_1,E_2)}{\diam'(E_1)} = \frac{\diam'(\{p_1,p_2\})}{\diam'(E_1)} \leq 2\,\eta\biggl(2\frac{\diam(\{p_1,p_2\})}{\diam(E_1)}\biggr) \\[6pt]
\leq 2\,\eta\biggl(2\frac{d(E_1,E_2)+\diam(E_1)+\diam(E_2)}{\diam(E_1)}\biggr) \leq 2\,\eta\biggl(2\biggl(\delta+1+\frac{C^3R}{r}\biggr)\biggr).\quad\qedhere
\end{multline*}
\end{proof}

\subsection{Existence of undistorted metrics}
\label{subsec:ProofThm1-2}

In this section we prove Theorem~\ref{thm:AdmitsUndistortedMetric} from the introduction, which characterizes the finitely ramified fractals that admit undistorted metrics.  Though we will not actually use this theorem later on, the fact that finitely ramified fractals commonly admit such a metric is important motivation for all of our work.

{
\renewcommand{\thetheorem}{\ref{thm:AdmitsUndistortedMetric}}
\begin{theorem}\textAdmitsUndistortedMetric
\end{theorem}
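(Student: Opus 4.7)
The plan is to prove the two directions separately.

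\textbf{Necessity.} Suppose $d$ is an $(r,R,C,\delta)$-undistorted metric on~$X$, and choose $k\in\N$ large enough that $CR^k<\min\{\delta,\tfrac{1}{3}\}$, which is possible because $R<1$. For condition~(2), if $F$ were an $(n+k)$-cell intersecting two disjoint $n$-cells $E_1,E_2$, then exponential decay applied to the intersecting pair $E_1,F$ gives $\diam(F)\leq CR^k\diam(E_1)$, while cell separation gives $\diam(F)\geq d(E_1,E_2)\geq\delta\diam(E_1)$; these force $CR^k\geq\delta$, a contradiction. For condition~(1), fix an $n$-cell $E$ and pick $x,y\in E$ with $d(x,y)=\diam(E)$. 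If every pair of $(n+k)$-descendants of $E$ intersected, then an $(n+k)$-descendant containing $x$ and one containing $y$ would share a point, and the triangle inequality together with exponential decay would give $\diam(E)\leq 2CR^k\diam(E)<\tfrac{2}{3}\diam(E)$, a contradiction.

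\textbf{Sufficiency.} Assume conditions~(1) and~(2) hold for some~$k$. I plan to construct an undistorted metric directly from the combinatorial data. Fix a parameter $s\in(0,1)$, to be specified later in terms of~$k$, and for distinct $x,y\in X$ define $\rho(x,y)=s^{m(x,y)}$, where $m(x,y)$ is the largest level~$n$ such that $x$ and $y$ lie in an intersecting pair of $n$-cells (a covering pair in the sense of Lemma~\ref{lem:CoveringPairsNewNew}). Then set
\[
d(x,y)=\inf\sum_{i=1}^{N}\rho(x_{i-1},x_i),
\]
the infimum taken over all finite chains $x=x_0,x_1,\ldots,x_N=y$ in~$X$. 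This function is automatically symmetric and satisfies the triangle inequality, and the upper bound $\diam_d(E)\leq s^n$ for any $n$-cell $E$ follows from the one-step chain.

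The main obstacle is the cell separation estimate: for every pair of disjoint $n$-cells $E_1,E_2$, every chain from a point of $E_1$ to a point of $E_2$ must have total weight at least $c\,s^n$ for some constant $c>0$ independent of~$n$. The critical input is condition~(2): any chain step $(x_{i-1},x_i)$ with $m(x_{i-1},x_i)\geq n+k$ forces $x_{i-1}$ and $x_i$ to lie in intersecting $(n+k)$-cells, and by~(2) the $n$-cell ancestors of these $(n+k)$-cells must themselves intersect or coincide. Iterating this shows that any run of cheap (fine-scale) steps traces out a walk in the intersection graph of $n$-cells, so a chain cannot escape from $E_1$ to a disjoint $E_2$ without either accumulating many cheap steps or including a coarse step whose weight is bounded below by a multiple of~$s^n$. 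A recursive argument on the scales $n, n+k, n+2k,\ldots$, paired with a suitable choice of~$s$ (for example $s=2^{-1/k}$) so that the geometric series balance, converts this dichotomy into the required uniform bound. Once cell separation is established, the lower bound on $\diam_d(E)$ needed for exponential decay follows by invoking condition~(1) to produce a pair of disjoint $(n+k)$-subcells of~$E$ and applying cell separation at level~$n+k$; nondegeneracy of~$d$ and compatibility with the original topology on~$X$ then follow by standard arguments.
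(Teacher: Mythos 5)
Your necessity argument is essentially the paper's: choose $k$ with $CR^k<\min(\delta,\text{const})$, use exponential decay to make $(n+k)$-descendants small enough to force disjointness for condition~(1), and combine exponential decay with cell separation for condition~(2). That half is fine.

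The sufficiency direction, however, has a genuine gap at exactly the point where the theorem is hard. You define a chain metric $d(x,y)=\inf\sum s^{m(x_{i-1},x_i)}$ and correctly identify that everything hinges on a uniform lower bound of the form $d(E_1,E_2)\geq c\,s^n$ for disjoint $n$-cells, but you do not prove it: the passage ``A recursive argument on the scales $n,n+k,n+2k,\ldots$, paired with a suitable choice of $s$ \ldots converts this dichotomy into the required uniform bound'' is a statement of intent, not an argument. This lower bound is the entire content of the converse --- without it your $d$ is not even known to be a metric (nondegeneracy is equivalent to the lower bound, so it cannot be deferred to ``standard arguments''), and neither the lower diameter bound for exponential decay nor cell separation follows. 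The difficulty you would have to overcome is that a chain may interleave steps at many different levels, and a long run of fine-scale steps, each individually cheap, could in principle walk from $E_1$ to $E_2$ with small total weight; condition~(2) only controls a \emph{single} $(n+k)$-cell meeting two disjoint $n$-cells, so one must organize a multi-scale induction that successively eliminates the finest steps while showing the total weight does not drop. The paper does this (Lemma~\ref{lem:ItIsAMetric}) by an induction on the length of a \emph{cell} chain that removes the highest-level cell and promotes its neighbors to level $M-k$, and the bookkeeping closes only because the weight parameter satisfies $\alpha^k\leq 3/2$ (so that $3-2\alpha^k\geq 0$). Your proposed choice $s=2^{-1/k}$ corresponds to $\alpha^k=2$, which would make that inequality fail --- a sign that the quantitative heart of the argument has not actually been checked. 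Until the lower bound is proved with explicit constants, the converse direction is not established.
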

\addtocounter{theorem}{-1}
}

Both of the conditions in this theorem are necessary.  It is easy to construct finitely ramified cell structures on the interval $[0,1]$, for example, that do not satisfy condition (1), and therefore do not admit an undistorted metric.  There are also finitely ramified fractals that satisfy condition~(1) but not condition~(2), as the following example shows.

\begin{example}\label{ex:NoUndistorted}
Let $X$ be the union of the line segments $[-1,1]\times \{0\}$ and $  \{0\}\times [0,1]$ in~$\R^2$.  For each $n\geq 1$, let $\delta_n$ denote the $n$th term in the sequence
\[
1,\;\;1/2,\;\;1/2,\;\;1/4,\;\;1/4,\;\;1/4,\;\;1/4,\;\;1/8,\;\;\ldots
\]
i.e.\ $\delta_n=2^{-\lfloor\log_2(n)\rfloor}$. Let 
\[
V_n=\bigl\{(x,y)\in X \;\bigr|\; x,y\in 2^{-n}\Z\text{ and  either }|x|\geq \delta_n\text{ or }y>0\bigr\},
\]
and let $\mathcal{E}_n$ be the set of closures of the connected components of $X\setminus V_n$.  Then $\{\mathcal{E}_n\}$ defines a finitely ramified cell structure on $X$ (with $\mathcal{E}_0=\{X\}$). Note that every $n$-cell in $X$ is the union of at least two $(n+1)$-cells, and contains a pair of disjoint $(n+2)$-cells.  However, for $j\geq 0$ the $(2^{j+1}-1)$-cell containing the origin intersects both of the $2^j$-cells
\[
\bigl[-2^{-j}-2^{-2^j},-2^{-j}\bigr]\times\{0\}
\qquad\text{and}\qquad
\bigl[2^{-j},2^{-j}+2^{-2^j}\bigr]\times\{0\}.
\]
These $2^j$-cells are disjoint, so by Theorem~\ref{thm:AdmitsUndistortedMetric} this finitely ramified fractal does not admit an  undistorted metric.
\end{example}

We now turn to the proof of Theorem~\ref{thm:AdmitsUndistortedMetric}, which occupies the remainder of this section.  We begin with the forward direction.

\begin{lemma}Let $X$ be a finitely ramified fractal.  If $X$ admits an undistorted metric, then $X$ satisfies conditions (1) and (2) in Theorem~\ref{thm:AdmitsUndistortedMetric}.
\end{lemma}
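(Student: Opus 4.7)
The plan is to derive both combinatorial conditions directly from exponential decay and cell separation, choosing $k$ large enough for each condition and then taking the maximum. Fix an $(r,R,C,\delta)$-undistorted metric $d$ on $X$. Since both conditions in Theorem~\ref{thm:AdmitsUndistortedMetric} are monotone in $k$ (refining an $(n+k)$-cell gives an $(n+k')$-cell for any $k'\geq k$, so disjointness is preserved in (1), and any $(n+k')$-cell is contained in some $(n+k)$-cell, so (2) is inherited), it suffices to find a single $k$ that works for each condition separately.

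For condition (1), I would argue by contradiction: suppose there is an $n$-cell $E$ all of whose $(n+k)$-cell descendants pairwise intersect (or coincide). Since $E$ has nonempty interior we have $\diam(E)>0$, and since $E$ equals the union of the $(n+k)$-cells contained in it, every $p,q\in E$ lie in some $(n+k)$-cells $F_p, F_q\subseteq E$ with $F_p\cap F_q\neq\emptyset$. By the triangle inequality and exponential decay,
\[
d(p,q)\leq \diam(F_p)+\diam(F_q)\leq 2CR^k\diam(E).
\]
Taking the supremum over $p,q$ gives $\diam(E)\leq 2CR^k\diam(E)$, i.e.\ $2CR^k\geq 1$. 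Since $R<1$, I can pick $k_1$ with $2CR^{k_1}<1$, and condition (1) must hold for $k_1$.

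For condition (2), suppose $E_1,E_2$ are disjoint $n$-cells both intersecting a common $(n+k)$-cell $E'$. Picking $x_1\in E_1\cap E'$ and $x_2\in E_2\cap E'$, cell separation and exponential decay give
\[
\delta\diam(E_1)\leq d(E_1,E_2)\leq d(x_1,x_2)\leq \diam(E')\leq CR^k\diam(E_1),
\]
so $\delta\leq CR^k$. Choosing $k_2$ with $CR^{k_2}<\delta$ forces condition (2) to hold at level $k_2$. Setting $k=\max(k_1,k_2)$ then yields a single $k$ satisfying both conditions.

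There is no real obstacle here; the proof is essentially a direct quantitative reading of the two clauses in Definition~\ref{def:undistorted}. The only point worth verifying carefully is that $\diam(E)>0$ for every cell $E$ in the first step, which is immediate since cells have nonempty interior in a metrizable space.
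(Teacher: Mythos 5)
Your proof is correct and follows essentially the same route as the paper's: choose $k$ so that $CR^k$ is small compared to $1$ and to $\delta$, then use exponential decay to bound the diameters of $(n+k)$-cells and derive condition (1) from a far-apart pair of points in an $n$-cell and condition (2) from the cell separation inequality. The only cosmetic differences are that you phrase condition (1) contrapositively with a supremum (the paper directly picks $p,q$ with $d(p,q)\geq\diam(E)/2$) and that you combine two values of $k$ via monotonicity rather than fixing a single $k$ with $CR^k<\min(\delta,1/4)$ at the outset; both are fine.
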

\begin{proof}
Let $d$ be an $(r,R,C,\delta)$-undistorted metric on~$X$, and let $k\in\N$ so that $CR^k < \min(\delta,1/4)$.

For condition (1), let $E$ be an $n$-cell in $X$, and let $p,q\in E$ so that $d(p,q) \geq \diam(E)/2$.  Let $E_1$ and $E_2$ be level $(n+k)$ descendants of $E$ that contain $p$ and $q$ respectively.  Then
\[
\diam(E_i)\leq CR^k\diam(E) < \diam(E)/4
\]
for each~$i$, so $E_1$ and $E_2$ must be disjoint.

For condition (2), let $E_1$ and $E_2$ be disjoint $n$-cells in $X$, and observe that $d(E_1,E_2) \geq \delta\,\diam(E_1)$ by the cell separation condition.  If $E$ is any $(n+k)$-cell that intersects~$E_1$, then
\[
\diam(E) \leq CR^k\diam(E_1) < \delta\diam(E_1)
\]
and therefore $E$ cannot intersect~$E_2$.
\end{proof}

For the converse, we must begin with a finitely ramified fractal that satisfies conditions (1) and (2) in the statement of Theorem~\ref{thm:AdmitsUndistortedMetric} and construct an undistorted metric on $X$. If $p$ and $q$ are points in $X$, define a \newword{cell chain} connecting $p$ and $q$ to be a sequence $E_1,\ldots,E_m$ of cells such that $p\in E_1$, $q\in E_m$, and each $E_i\cap E_{i+1}\ne \emptyset$.  Given any $\alpha>1$, let
\[
d_\alpha(p,q) = \inf_{E_1,\ldots,E_l} \sum_{i=1}^l \alpha^{-|E_i|}
\]
where $|E_i|$ denotes the level of $E_i$, and the infimum is taken over all cell chains connecting $p$ and~$q$.

For the following lemma, observe that if $p$ and $q$ are distinct points in $X$, then there exists an $n\in\N$ so that $p$ and $q$ are not contained in any intersecting pair of $n$-cells (see~\cite[Proposition~2.9]{Tep}).  We will denote the smallest such $n$ by $P(p,q)$.

\begin{lemma}\label{lem:ItIsAMetric}
Let $X$ be a finitely ramified fractal that satisfies condition~(2) of Theorem~\ref{thm:AdmitsUndistortedMetric} with respect to some $k\in\N$, and let $1< \alpha \leq (3/2)^{1/k}$.  Then $d_\alpha$ is a metric on $X$, and 
\[
d(p,q) \geq \alpha^{1-P(p,q)-k}
\]
for any distinct points $p,q\in X$.
\end{lemma}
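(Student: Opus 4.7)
The proof will verify the metric axioms and establish the quantitative lower bound together, since positivity of $d_\alpha$ on distinct points follows from the latter. The three elementary properties are direct. Symmetry is immediate: reversing a cell chain from $p$ to $q$ produces one from $q$ to $p$ with the same sum. For $d_\alpha(p,p)=0$, property~(4) of a finitely ramified cell structure gives a descending chain $X=E_0\supseteq E_1\supseteq\cdots$ with $\bigcap E_n=\{p\}$, and each single-cell ``chain'' $\{E_n\}$ has sum $\alpha^{-n}\to 0$. The triangle inequality $d_\alpha(p,q)\leq d_\alpha(p,r)+d_\alpha(r,q)$ follows by concatenating any chain from $p$ to $r$ with any chain from $r$ to $q$: the last cell of the first chain and the first cell of the second both contain $r$, so their intersection is nonempty and the concatenation is a valid chain.

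The heart of the lemma is the lower bound $d_\alpha(p,q)\geq \alpha^{1-N-k}$ where $N:=P(p,q)$. I would argue by cases on the cell levels in an arbitrary chain $E_1,\ldots,E_l$ from $p$ to $q$. If some $|E_i|\leq N+k-1$, then the single term $\alpha^{-|E_i|}\geq \alpha^{-(N+k-1)}=\alpha^{1-N-k}$ already yields the bound, so we may assume every $|E_i|\geq N+k$.

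The key structural observation in this reduced case is that the $(N+k)$-ancestors of the $E_i$ form a chain of $(N+k)$-cells from $p$ to $q$ which, after removing consecutive duplicates to produce $G_1',\ldots,G_{l'}'$, has length $l'\geq 4$. I would prove this by direct case analysis using condition~(2) of Theorem~\ref{thm:AdmitsUndistortedMetric}. The cases $l'\leq 2$ force the $N$-ancestors of $G_1'$ and $G_{l'}'$---which contain $p$ and $q$ respectively---to intersect, contradicting $P(p,q)=N$. For $l'=3$ with $N$-ancestors $F_1,F_2,F_3$, the only possibility consistent with $F_1\cap F_3=\emptyset$ (required by $P(p,q)=N$) is that $F_1,F_2,F_3$ are three distinct $N$-cells with $F_1\cap F_2,F_2\cap F_3$ nonempty; but then $G_2'$ is an $(N+k)$-cell intersecting both disjoint $N$-cells $F_1$ and $F_3$ (via the consecutive chain intersections), contradicting condition~(2). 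Hence $l\geq l'\geq 4$.

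When every cell is exactly at level $N+k$, this immediately gives $\sum\alpha^{-|E_i|}=l\alpha^{-(N+k)}\geq 4\alpha^{-(N+k)}\geq \alpha\cdot\alpha^{-(N+k)}=\alpha^{1-N-k}$, using $\alpha\leq (3/2)^{1/k}\leq 3/2<4$. The main obstacle---and the place where the hypothesis $\alpha\leq(3/2)^{1/k}$ is essential---is handling chains with cells at higher or mixed levels, where individual cells contribute very little. For such chains I would apply the structural lemma recursively: at each level $N+jk$, condition~(2) applied to sub-chains within cells of the previous level yields a minimum chain length growing at least like $2^j+2$, which outpaces the decay factor $\alpha^{-jk}$ because $\alpha^k\leq 3/2<2$. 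Reducing mixed-level chains to uniform-level chains (noting that coarsening any cell to its parent only increases the total sum, and refining by splitting into $M'$-descendants only increases total length) then allows the uniform-level estimate to give the desired bound $\sum\alpha^{-|E_i|}\geq \alpha^{1-N-k}$.
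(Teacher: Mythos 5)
Your verification of the metric axioms matches the paper's (which simply calls it immediate), your opening reduction is sound (if some $|E_i|\leq P(p,q)+k-1$ a single term already gives the bound, so one may assume all levels are at least $P(p,q)+k$), and your doubling observation for \emph{uniform-level} chains is correct and provable: passing to the level-$(n-k)$ ancestors of every other cell and invoking condition~(2) shows a chain of $(N+jk)$-cells joining $p$ to $q$ has length at least $2^j+2$, which outpaces $\alpha^{jk}\leq(3/2)^j$. The gap is the final step, the reduction of an arbitrary mixed-level chain to a uniform-level one --- and this is the entire difficulty of the lemma. Neither of the two operations you invoke has the monotonicity a lower bound requires. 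To bound $\inf_{\text{chains}}\sum_i\alpha^{-|E_i|}$ from below you must transform a given chain into a canonical one \emph{without increasing the sum} and then bound the canonical chains; but coarsening a cell to its parent \emph{increases} its term (and after deleting the consecutive duplicates coarsening creates, the total can move either way), so a lower bound for the coarsened chain says nothing about the original. Refining a cell into a chain of its $M'$-descendants ``only increases total length,'' but length is not the quantity being bounded: by your own doubling estimate the refinement of a level-$m$ cell may require on the order of $2^{(M'-m)/k}$ descendants, so its contribution to the sum can vastly exceed $\alpha^{-m}$, and again the bound does not transfer back. So the uniform-level estimate is never actually connected to the mixed-level case.

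The paper closes exactly this gap with an induction on chain length using a carefully calibrated local move: pick an interior cell $E_j$ of maximal level $M\geq n+k$; if both neighbours have level at most $M-k$, condition~(2) forces them to intersect and $E_j$ is simply deleted (the sum only decreases); otherwise each neighbour is replaced by its ancestor at level exactly $M-k$ and $E_j$ is deleted, and the estimate $\bigl(\alpha^{-M}-\alpha^{k-M}\bigr)+\alpha^{-M}+\bigl(\alpha^{-M}-\alpha^{k-M}\bigr)=\bigl(3-2\alpha^k\bigr)\alpha^{-M}\geq 0$ --- precisely where the hypothesis $\alpha\leq(3/2)^{1/k}$ is spent --- shows the sum does not increase. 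If you wish to keep your level-counting strategy instead, you would need a weighted doubling bound controlling $\sum_j m_j\,\alpha^{-jk}$, where $m_j$ counts the cells at levels near $N+jk$; as written the proposal does not supply this, so the argument is incomplete at its crucial point.
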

\begin{proof}
It is immediate by the definition that $d_\alpha$ is symmetric, satisfies the triangle inequality, and that $d_\alpha(p,p)=0$ for all $p\in X$.  Thus, to prove that $d_\alpha$ is a metric we need only show that $d_\alpha(p,q) > 0$ for all distinct $p$ and $q$, which will follow from the stated inequality.

To prove the inequality, let $p$ and $q$ be distinct points in $X$, and let $n=P(p,q)$.  We must show that $\sum_{i=1}^l \alpha^{-|E_i|} \geq \alpha^{1-n-k}$ for any cell chain $E_1,\ldots,E_l$ connecting $p$ to $q$.  This holds automatically if $|E_1|< n$ or $|E_l|<n$, so we will assume throughout that $|E_1|\geq n$ and $|E_l|\geq n$. For such a chain, we will prove by induction that
\[
\sum_{1<i<l} \alpha^{-|E_i|} \geq \alpha^{1-n-k}.
\]
Note that this sum excludes $i=1$ and $i=l$.

The base case is $l \leq 3$.  For this case, since $|E_1|,|E_3| \geq n$, there exists $n$-cells $E_1'\supseteq E_1$ and $E_3'\supseteq E_3$.  Since $p\in E_1'$ and $q\in E_3'$, these $n$-cells are disjoint, so $l$ must be~$3$.  If $|E_2|\geq n+k$, then $E_2$ is contained in some $(n+k)$-cell that intersects both $E_1'$ and $E_3'$, contradicting condition (2) of Theorem~\ref{thm:AdmitsUndistortedMetric}.  We conclude that $|E_2|< n+k$, and hence $\sum_{1<i<l}\alpha^{-|E_i|} = \alpha^{-|E_2|} \geq \alpha^{1-n-k}$.

For the induction step, suppose $E_1,\ldots, E_l$ is a cell chain of length $l\geq 4$ connecting $p$ and $q$ for which $|E_1|,|E_l|\geq n$. Replacing $E_1$ and $E_l$ with $n$-cells that contain them, we may assume that $|E_1|=|E_l|=n$.  Choose a cell $E_j$ from $E_2,\ldots,E_{l-1}$ whose  level $M=|E_j|$ is a large as possible.  If $M<n+k$ we are done, so suppose $M\geq n+k$.

Consider first the case where $|E_{j-1}|,|E_{j+1}|\leq M-k$.  Since $E_{j-1}$ and $E_{j+1}$ intersect $E_j$, we can find $(M-k)$-cells $E_{j-1}'\subseteq E_{j-1}$ and $E_{j+1}'\subseteq E_{j+1}$ that intersect~$E_j$.  By condition (2) of Theorem~\ref{thm:AdmitsUndistortedMetric}, no two disjoint $(M-k)$-cells can intersect a common $M$-cell, so $E_{j-1}'$ and $E_{j+1}'$ cannot be disjoint.  We conclude that $E_{j-1}$ and $E_{j+1}$ intersect, so $E_1,\ldots,E_{j-1},E_{j+1},\ldots E_l$ is a cell chain connecting $p$ and $q$.  By our induction hypothesis, it follows that
\[
\sum_{1<i<l} \alpha^{-|E_i|} \geq \sum_{\substack{1 < i < l \\ i\ne j}} \alpha^{-|E_i|} \geq \alpha^{1-n-k}.
\]

All that remains is the case where  $|E_{j-1}|> M-k$ or $|E_{j+1}|> M-k$.  Let $E_{j-1}'$ be a cell containing $E_{j-1}$ at level $|E_{j-1}'|=\min(|E_{j-1}|,M-k)$, let $E_{j+1}'$ be a cell containing $E_{j+1}$ at level $|E_{j+1}'| = \min(|E_{j+1}|,M-k)$, and let $E_i'=E_i$ for $i\ne j\pm 1$.  Then $E_1',\ldots, E_l'$ is again a cell chain connecting $p$ and $q$.  Note that $\min(n,M-k)=n$, so $E_1'$ and $E_l'$ are both $n$-cells, even in the case where $j=2$ or~$j=l-1$.  Since $|E_j'| = M$ and $|E_{j\pm 1}'|\leq M-k$, it follows from the argument in the previous paragraph that $E_{j-1}'$ and $E_{j+1}'$ intersect, so $E_1',\ldots,E_{j-1}',E_{j+1}',\ldots,E_{l}'$ is a cell chain.  By our induction hypothesis, we conclude that
\[
\sum_{\substack{1<i<l \\ i\ne j}} \alpha^{-|E_i'|}\geq \alpha^{1-n-k}.
\]
But
\[
\sum_{1<i<l} \alpha^{-|E_i|} - \sum_{\substack{1<i<l \\ i\ne j}} \alpha^{-|E_i'|} = \Bigl(\alpha^{-|E_{j-1}|} - \alpha^{-|E_{j-1}'|}\Bigr) + \alpha^{-|E_j|} + \Bigl(\alpha^{-|E_{j+1}|} - \alpha^{-|E_{j+1}'|}\Bigr)
\]
so it suffices to prove that the expression on the right is non-negative.

Now, if $|E_{j-1}| > M-k$, then $|E_{j-1}'|=M-k$.  Since $|E_{j-1}|\leq M$, it follows that
\[
\alpha^{-|E_{j-1}|} - \alpha^{-|E_{j-1}'|} \geq \alpha^{-M} - \alpha^{k-M}.
\]
Since $\alpha>1$, the expression on the right is negative, so this inequality also holds when $|E_{j-1}|\leq M-k$, since $E_{j-1}'=E_{j-1}$ in that case.  Of course, a similar inequality also holds for $j+1$ instead of $j-1$.  Since $|E_j|=M$, we conclude that
\begin{align*}
\sum_{1<i<l} \alpha^{-|E_i|} - \sum_{\substack{1<i<l \\ i\ne j}} \alpha^{-|E_i'|}
&\geq
\Bigl(\alpha^{-M}-\alpha^{k-M}\Bigr) + \alpha^{-M} + \Bigl(\alpha^{-M}-\alpha^{k-M}\Bigr) \\
&= \Bigl(3 - 2\alpha^k\Bigr) \alpha^{-M} \geq 0
\end{align*}
where the last step uses the hypothesis that $1<\alpha\leq (3/2)^{1/k}$.  By induction, we conclude that $\sum_{1<i<l} \alpha^{-|E_i|}\geq \alpha^{1-n-k}$ for any cell chain connecting $p$ and $q$ as long as $|E_1|,|E_l|\geq n$, and the desired inequality follows.
\end{proof}

The following lemma completes the proof of Theorem~\ref{thm:AdmitsUndistortedMetric}.

\begin{lemma}\label{lem:ConstructedMetricIsUndistorted}
Let $X$ be a finitely ramified fractal that satisfies conditions (1) and~(2) of Theorem~\ref{thm:AdmitsUndistortedMetric} with respect to some $k\in\N$, and let $1< \alpha \leq (3/2)^{1/k}$.  Then $d_\alpha$ is an undistorted metric on~$X$.
\end{lemma}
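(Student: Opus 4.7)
The plan is to verify the four conditions of Lemma~\ref{lem:EquivalentExponentialDecay} for $d_\alpha$. Everything hinges on the two-sided diameter bound
\[
\alpha^{1-n-2k}\le \diam(E)\le \alpha^{-n}
\]
for every $n$-cell $E$. The upper bound is immediate from the one-term cell chain $\{E\}$, which gives $d_\alpha(p,q)\le \alpha^{-n}$ for all $p,q\in E$. For the lower bound, condition~(1) of Theorem~\ref{thm:AdmitsUndistortedMetric} supplies two disjoint $(n+k)$-cells $E',E''\subseteq E$; choosing $p\in E'$ and $q\in E''$ that lie in no other $(n+k)$-cell (possible because the intersections of an $(n+k)$-cell with its neighbors are finite), we have $P(p,q)\le n+k$, and then Lemma~\ref{lem:ItIsAMetric} yields $d_\alpha(p,q)\ge \alpha^{1-n-2k}$.

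From these diameter bounds, conditions~(1), (2), and~(3) of Lemma~\ref{lem:EquivalentExponentialDecay} are routine: the ratio $\diam(E_1)/\diam(E_2)$ is bounded by $\lambda=\alpha^{2k-1}$ for any two $n$-cells (in particular intersecting ones), the ratio $\diam(E')/\diam(E)$ is at least $\mu=\alpha^{-2k}$ whenever $E'$ is an $(n+1)$-cell inside an $n$-cell $E$, and taking $k'=2k$ produces an $\nu=\alpha^{-1}<1$ contraction factor across $k'$ levels.

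The main obstacle is the cell separation condition~(4) of Lemma~\ref{lem:EquivalentExponentialDecay}, because the clean bound $P(p,q)\le n$ for $p\in E_1,q\in E_2$ only holds when $p,q$ avoid the finite set of points where their $n$-cell meets others. The approach is first to show that $d_\alpha$ is continuous on $X\times X$ with respect to the original topology on $X$: by local finiteness, the union of the (finitely many) $n$-cells containing any point $p$ is an open neighborhood of $p$ on which $d_\alpha(p,\cdot)\le\alpha^{-n}$, so $d_\alpha(\tilde p,p)\to 0$ whenever $\tilde p\to p$ in $X$, and continuity of $d_\alpha$ then follows from the triangle inequality. Now suppose $E_1,E_2$ are disjoint $n$-cells with intersecting parents. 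Each $E_i$ meets other $n$-cells only in a finite set (Definition~\ref{def:finitelyramifiedcellstructure}(3)), so the set of $\tilde p\in E_i$ that lie in no other $n$-cell is dense in $E_i$ (since $E_i$ is a connected compactum with nonempty interior). For such interior $\tilde p\in E_1,\tilde q\in E_2$, the only $n$-cells containing them are $E_1$ and $E_2$, which are disjoint, so $P(\tilde p,\tilde q)\le n$ and Lemma~\ref{lem:ItIsAMetric} gives $d_\alpha(\tilde p,\tilde q)\ge \alpha^{1-n-k}$. Approximating arbitrary $p\in E_1,q\in E_2$ by such interior points and using continuity of $d_\alpha$ yields
\[
d_\alpha(E_1,E_2)\ge \alpha^{1-n-k}\ge \alpha^{1-k}\diam(E_1),
\]
so condition~(4) holds with $\delta=\alpha^{1-k}$, completing the proof.
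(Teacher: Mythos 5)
Your proof is correct and follows essentially the same route as the paper: the two-sided bound $\alpha^{1-n-2k}\le\diam(E)\le\alpha^{-n}$ obtained from one-cell chains and from Lemma~\ref{lem:ItIsAMetric} applied to points interior to two disjoint $(n+k)$-subcells, and then cell separation via the density of points lying in a unique $n$-cell. The only differences are cosmetic --- the paper deduces the exponential decay condition directly from the two-sided diameter bound rather than passing through Lemma~\ref{lem:EquivalentExponentialDecay}, and you make explicit the continuity of $d_\alpha$ that the paper's ``since interior points are dense'' step leaves implicit.
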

\begin{proof}
By Lemma~\ref{lem:ItIsAMetric}, the function $d_\alpha$ is a metric on~$X$.  For the exponential decay condition, let $E$ be any $n$-cell in~$X$. Then $E$ is itself a cell chain connecting any two points in~$E$, so $\diam(E)\leq \alpha^{-n}$.  For a lower bound on the diameter of~$E$, observe that by condition~(1) of Theorem~\ref{thm:AdmitsUndistortedMetric} we can find disjoint $(n+k)$-cells $E_1$ and $E_2$ that are contained in $E$.  If $p$ and $q$ are interior points of $E_1$ and $E_2$, respectively, then $E_1$ and $E_2$ are the only $(n+k)$-cells that contain $p$ and $q$, so $P(p,q)\leq n+k$.  By Lemma~\ref{lem:ItIsAMetric}, it follows that $d(p,q)\geq \alpha^{1-n-2k}$.  Thus
\[
\alpha^{1-n-2k} \leq \diam(E) \leq \alpha^{-n}.
\]
Since $k$ is a constant, the exponential decay condition follows easily.

For the cell separation condition, observe that each cell has at most finitely many boundary points, namely its intersection points with other cells at the same level, and the remaining points are interior points.  Since each cell is compact, connected, and has nonempty interior, the interior points of any cell are dense in the cell. Let $E_1$ and $E_2$ be disjoint $n$-cells, and let $p_1$ and $p_2$ be interior points of these cells.  Then $P(p_1,p_2)\leq n$, so $d(p_1,p_2)\geq \alpha^{1-n-k}$ by Lemma~\ref{lem:ItIsAMetric}.  Since interior points are dense, we conclude that $d(E_1,E_2)\geq \alpha^{1-n-k}$. But $\diam(E_1)\leq \alpha^{-n}$ by the argument in the previous paragraph, so $d(E_1,E_2)\geq \alpha^{1-k}\diam(E_1)$, which verifies the cell separation condition.
\end{proof}

\begin{remark}\label{rem:StrongExponentialDecay}
In Lemma~\ref{lem:ConstructedMetricIsUndistorted}, we actually proved a stronger version of the exponential decay condition, namely:
\begin{description}
    \item[Strong Exponential Decay] There exist constants $0<r<1$ and $C\geq 1$ so that
\[
\frac{r^n}{C} \leq \diam(E) \leq Cr^n
\]
for every $n$-cell~$E$.
\end{description}
Not every undistorted metric satisfies this condition (see Example~\ref{ex:WeirdIntervalCells}), and indeed such metrics seem to be rare in the context of Julia sets.  However, it follows from  Lemma~\ref{lem:ConstructedMetricIsUndistorted} and Theorem \ref{thm:CompatibleMetric} that every undistorted metric on a finitely ramified fractal is quasisymmetrically equivalent to an undistorted metric that satisfies the strong exponential decay condition.  \end{remark}

\begin{figure}
    \centering
    \includegraphics{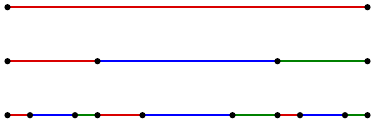}
    \caption{A finitely ramified cell structure on $[0,1]$ with one \mbox{$0$-cell}, three $1$-cells, nine $2$-cells, and so forth.}
    \label{fig:WeirdIntervalCells}
\end{figure}

\begin{example}\label{ex:WeirdIntervalCells}
 Consider the cell structure on the interval $[0,1]$ shown in Figure~\ref{fig:WeirdIntervalCells}, where each $n$-cell $[a,b]$ is subdivided into three $(n+1)$-cells of lengths $(b-a)/4$, $(b-a)/2$, and $(b-a)/4$, respectively.  It is not hard to check that these cells satisfy the hypotheses of
Lemma~\ref{lem:EquivalentExponentialDecay} with $(\lambda,\mu,k,\nu,\delta)=(2,1/4,1,1/2,1/2)$, so the Euclidean metric is undistorted with respect to this cell structure.  However, the strong exponential decay condition is not satisfied, since for each $n\geq 1$ there are $n$-cells of diameters $2^{-n}$ and $4^{-n}$.
\end{example}

\subsection{Sufficient condition for undistorted metrics}\label{subsec:ExistUndistorted}
Though a finitely ramified fractal must be fragmented at progressively smaller scales, the definition does not include any requirement of self-similarity. In this section, we develop a notion of quasi-self-similarity for a metric on a finitely ramified fractal and show that any such metric is undistorted.  

One basic way to enforce self-similarity for a finitely ramified fractal is to require that there be finitely many different types of cells, where two cells have the same type if there exists a cellular homeomorphism between them.  Any such fractal arises as the limit space of a colored hyperedge\footnote{A hyperedge replacement system is exactly the analog of an edge replacement system as described in \cite{BeFo2}, except that we use hypergraphs instead of graphs, with an ordering on the vertices of each hyperedge, and all hyperedges of the same color having the same number of vertices.} replacement system, as described in~\cite{BeFo2}. Such a system has one color for each type of cell, with expansion rules describing the subdivision of a cell into its children.

The following proposition considers a fractal with finitely many types of cells, with the added condition that the cellular homeomorphisms are uniform quasisymmetries between cells and between \newword{neighboring pairs} of cells, i.e.\ pairs of distinct cells at the same level that intersect. 
 We will use this proposition in Section~\ref{subsec:PiecewiseCanonical} to prove that the Euclidean metric on certain Julia sets is undistorted.

\begin{proposition} \label{prop:undistorted}
Let $X$ be a finitely ramified fractal, and let $d$ be a continuous metric on $X$.  Suppose that there exists a finite collection $\M$ of cells in $X$ and a homeomorphism $\eta\colon[0,\infty)\to[0,\infty)$ with the following properties:
\begin{enumerate}
    \item For every cell $E$ in $X$, there exists an $M\in \M$ and a cellular $\eta$-quasisymmetry $E\to M$.\smallskip
    \item For every pair $E_1,E_2$ of neighboring cells in $X$ there exists a pair $M_1,M_2$ of neighboring cells in $\M$ and a cellular $\eta$-quasisymmetry $E_1\cup E_2\to M_1\cup M_2$.
\end{enumerate}
Then the metric $d$ is undistorted.
\end{proposition}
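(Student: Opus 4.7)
The plan is to verify the four local conditions of Lemma~\ref{lem:EquivalentExponentialDecay}. Since $\mathcal{M}$ is finite, the diameter ratios of neighboring pairs in $\mathcal{M}$, the diameter ratios of parent-to-child pairs in $\mathcal{M}$, and the relative separations $d(M_1,M_2)/\diam(M_1)$ over disjoint pairs in $\mathcal{M}$ whose parents intersect are each bounded away from $0$ and $\infty$ by universal constants. My task is then to transfer these uniform bounds from $\mathcal{M}$ to the whole fractal $X$ using the cellular $\eta$-quasisymmetries provided by hypotheses~(1) and~(2), with Lemma~\ref{lem:DiameterRatioGeneral} as the main tool.

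For condition~(1) of Lemma~\ref{lem:EquivalentExponentialDecay}, given neighboring $n$-cells $E_1,E_2$, hypothesis~(2) supplies a cellular $\eta$-quasisymmetry $h\colon E_1\cup E_2\to M_1\cup M_2$ with $h(E_i)=M_i$. Applying Lemma~\ref{lem:DiameterRatioGeneral} with $S=E_2$ and $T=E_1$ yields
\[
\frac{\diam(M_2)}{\diam(M_1)}\leq 2\,\eta\!\left(2\,\frac{\diam(E_2)}{\diam(E_1)}\right),
\]
and the positive lower bound on $\diam(M_2)/\diam(M_1)$ over neighboring pairs in $\mathcal{M}$ translates, via $\eta^{-1}$, to the required upper bound on $\diam(E_1)/\diam(E_2)$. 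Condition~(2) is handled the same way using hypothesis~(1) and a cellular $\eta$-quasisymmetry $E\to M$, this time applying Lemma~\ref{lem:DiameterRatioGeneral} with $S=E'$ and $T=E$ and using the uniform positive lower bound on $\diam(M')/\diam(M)$ for parent-child pairs in $\mathcal{M}$.

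For condition~(3), I would again invoke hypothesis~(1) but apply Lemma~\ref{lem:DiameterRatioGeneral} with $S=E$ and $T=E'$, so that the inequality reads $\diam(M)/\diam(M')\leq 2\,\eta(2\diam(E)/\diam(E'))$; a uniformly large value of $\diam(M)/\diam(M')$ then forces $\diam(E')/\diam(E)$ to be uniformly smaller than any prescribed $\nu<1$. The nontrivial step, and what I expect to be the main obstacle, is finding a single $k$ making $\diam(M')/\diam(M)$ uniformly small over all $M\in\mathcal{M}$ and all $(|M|+k)$-descendants $M'$ of $M$. My approach is a K\"onig-style selection: if for some $M\in\mathcal{M}$ no such $k$ worked, then the local finiteness of the cell tree would let me choose, child by child, an infinite descending chain $M\supseteq N_1\supseteq N_2\supseteq\cdots$ where each $N_j$ contains arbitrarily deep descendants of diameter bounded below by a fixed positive constant; this would make $\diam(N_j)$ bounded below, contradicting the fact that $\bigcap_j N_j$ is a single point per axiom~(4) of a finitely ramified cell structure. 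Taking the max of the required $k$ over the finite set $\mathcal{M}$ then gives a uniform choice.

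Finally, for the cell separation condition~(4), I would take disjoint $n$-cells $E_1,E_2$ whose parents intersect, apply hypothesis~(2) to the union of the parents to obtain $h$ with $h(E_i)=M_i$, and consider arbitrary $p_1\in E_1$ and $p_2\in E_2$. Compactness of $E_1$ gives $x,y\in E_1$ with $d(x,y)=\diam(E_1)$, and the triangle inequality lets me choose an anchor $a\in\{x,y\}$ with $d(p_1,a)\geq \diam(E_1)/2$. Applying the $\eta$-quasisymmetry condition to the triple $(p_1,p_2,a)$, together with $d(h(p_1),h(p_2))\geq d(M_1,M_2)$ and $d(h(p_1),h(a))\leq\diam(M_1)$, gives
\[
\frac{d(p_1,p_2)}{d(p_1,a)}\geq \eta^{-1}\!\left(\frac{d(M_1,M_2)}{\diam(M_1)}\right)\geq \eta^{-1}(c),
\]
where $c>0$ is the minimum of $d(M_1,M_2)/\diam(M_1)$ over the finitely many relevant configurations in $\mathcal{M}$. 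This yields $d(E_1,E_2)\geq (\eta^{-1}(c)/2)\,\diam(E_1)$, completing the verification of all four conditions and hence of undistortedness.
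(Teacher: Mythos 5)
Your proof is correct and follows the same overall strategy as the paper's: reduce to the four local conditions of Lemma~\ref{lem:EquivalentExponentialDecay} and transfer the finitely many uniform bounds available on $\M$ to arbitrary cells via the cellular $\eta$-quasisymmetries. The execution differs in ways worth noting, mostly to your advantage. You apply Lemma~\ref{lem:DiameterRatioGeneral} to the forward maps $E\to M$ and then invert the resulting inequalities through $\eta^{-1}$, whereas the paper first records that the inverse of an $\eta$-quasisymmetry is an $\eta'$-quasisymmetry and applies the lemma to the maps $M\to E$; these are equivalent. For condition~(3) you make explicit the K\"onig/compactness argument producing the uniform $k$ (the paper simply cites axiom~(4) of Definition~\ref{def:finitelyramifiedcellstructure}), and your formulation --- choose $k$ so that $\diam(E')/\diam(E)$ falls below \emph{any} prescribed $\nu<1$ --- is more careful than the paper's choice $\nu=2\,\eta(1)$, which need not be less than~$1$. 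Most notably, for cell separation your three-point argument correctly yields the required \emph{lower} bound $d(E_1,E_2)\geq \bigl(\eta^{-1}(c)/2\bigr)\diam(E_1)$, whereas the paper's displayed computation at the corresponding step bounds $d(E_1,E_2)/\diam(E_1)$ from \emph{above}, which is the wrong direction (the roles of source and target need to be exchanged there); your version is the one that should be believed. One small fix: when the disjoint cells $E_1,E_2$ share a parent, hypothesis~(2) does not apply to ``the union of the parents''; use hypothesis~(1) on the common parent instead, matching the paper's ``equal or neighboring'' phrasing, and accordingly take the minimum defining $c$ over disjoint pairs of children of equal or neighboring cells of~$\M$ (these children need not themselves lie in $\M$, but there are still only finitely many by local finiteness).
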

\begin{proof}Let $\mathcal{M}$ be a finite collection of cells and $\eta\colon [0,\infty)\to [0,\infty)$ a homeomorphism satisfying conditions (1) and (2) above. It suffices to verify that $d$ satisfies the conditions in Lemma~\ref{lem:EquivalentExponentialDecay}.

Note that the inverse of an $\eta$-quasisymmetry is itself an $\eta'$-quasisymmetry with $\eta'(t)=1/\eta^{-1}(1/t)$ for $t > 0$. By replacing $\eta$ with $\max(\eta,\eta')$, we can assume that the inverses of the $\eta$-quasisymmetries in conditions (1) and (2) above are also $\eta$-quasisymmetries.

For condition~(1) in Lemma~\ref{lem:EquivalentExponentialDecay}, let $\lambda_0$ be the maximum value of the ratio $\diam(M_1)/\diam(M_2)$ for every intersecting pair of cells $M_1,M_2\in\mathcal{M}$. Let $E_1$ and $E_2$ be neighboring cells, and let $M_1$ and $M_2$ be neighboring cells in $\M$ that map to $E_1$ and $E_2$ by a cellular $\eta$-quasisymmetry.  By Lemma~\ref{lem:DiameterRatioGeneral}, we have
\[
\frac{\diam(E_1)}{\diam(E_2)} \leq 2\,\eta\biggl(2\frac{\diam(M_1)}{\diam(M_2)}\biggr) \leq 2\,\eta(2\lambda_0)
\]
so $\lambda= 2\,\eta(2\lambda_0) \geq 1$ suffices.

For condition~(2) in Lemma~\ref{lem:EquivalentExponentialDecay}, let $\mu_0$ be the minimum value of the ratio $\diam(M')/\diam(M)$ as $M$ ranges over all cells in $\M$ and $M'$ ranges over all children of $M$. Let $E$ be a cell and $E'$ a child of~$E$, and let $M$ and $M'$ be cells in $\M$ that map to $E$ and $E'$ by a cellular $\eta$-quasisymmetry. By Lemma~\ref{lem:DiameterRatioGeneral},
\[
\frac{\diam(E)}{\diam(E')} \leq 2\,\eta\biggl(2\frac{\diam(M)}{\diam(M')}\biggr) \leq 2\,\eta(2\mu_0^{-1})
\]
so $\mu= \bigl(2\,\eta(2\mu_0^{-1})\bigr)^{-1}$ suffices.

For condition~(3) in Lemma~\ref{lem:EquivalentExponentialDecay}, note that by condition (4) of Definition \ref{def:finitelyramifiedcellstructure} there exists a $k\in\N$ so that
\[
\frac{\diam(M')}{\diam(M)}\leq \frac{1}{2}
\]
for each $n$-cell $M\in\mathcal{M}$ and each $(n+k)$-cell $M'$ contained in~$M$.  Now let $E$ be an arbitrary $n$-cell and $E'$ an $(n+k)$-cell contained in~$E$, and let $M$ and $M'$ be cells in $\M$ that map to $E$ and $E'$ by a cellular $\eta$-quasisymmetry. By Lemma~\ref{lem:DiameterRatioGeneral},
\[
\frac{\diam(E')}{\diam(E)} \leq 2\,\eta\biggl(2\frac{\diam(M')}{\diam(M)}\biggr) \leq 2\,\eta(1)
\]
so $\nu= 2\,\eta(1)$ together with $k$ suffice.

For condition~(4) in Lemma~\ref{lem:EquivalentExponentialDecay}, let $\delta_0$ be the maximum of
\[
\frac{d(M_1,M_2) + \diam(M_2)}{\diam(M_1)}
\]
as $M_1$ and $M_2$ range over all disjoint cells whose parents are equal or neighboring cells in~$\M$.  Now let $E_1$ and $E_2$ be disjoint cells whose parents are equal or neighboring, and choose points $p_1\in E_1$ and $p_2\in E_2$ so that $d(p_1,p_2)=d(E_1,E_2)$.  Let $M_1$ and $M_2$ be cells whose parents are equal or neighboring cells in $\M$ so that $M_1$ and $M_2$  map to $E_1$ and $E_2$ by a cellular $\eta$-quasisymmetry, and let $q_1$ and $q_2$ be the points that map to $p_1$ and $p_2$. It follows from Lemma~\ref{lem:DiameterRatioGeneral} that
\begin{multline*}
\frac{d(E_1,E_2)}{\diam(E_1)} = \frac{\diam(\{p_1,p_2\})}{\diam(E_1)} \leq 2\,\eta\biggl(2\frac{\diam(\{q_1,q_2\})}{\diam(M_1)}\biggr) \\[6pt]
\leq 2\,\eta\biggl(2\frac{d(M_1,M_2)+\diam(M_1)+\diam(M_2)}{\diam(M_1)}\biggr)
\leq 2\,\eta(2(\delta_0+1))\quad
\end{multline*}
so $\delta= 2\,\eta(2(\delta_0+1))$ suffices.  
\end{proof}

\subsection{Quasisymmetries between finitely ramified fractals}\label{subsec:PiecewiseCellularAreQuasisymmetries}
In this section, we apply Corollary \ref{cor:PullbackTest} to piecewise cellular homeomorphisms between finitely ramified fractals.  In particular, we show that any piecewise cellular homeomorphism $h\colon X\to Y$ between finitely ramified fractals with undistorted metrics is a quasisymmetry. In the case where $X=Y$ this often produces a large class of quasisymmetries, and we exploit this idea in Theorems \ref{thm:BubbleBathModularGroup}, \ref{thm:UnicriticalTheorem}, and \ref{thm:ContainsF} to build many quasisymmetries of Julia sets.

{
\renewcommand{\thetheorem}{\ref{thm:PiecewiseCellular}}
\begin{theorem}\textPiecewiseCellular
\end{theorem}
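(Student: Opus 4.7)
The plan is to apply Corollary~\ref{cor:PullbackTest}, reducing the theorem to showing the pullback metric $d := h^*(d_Y)$ on $X$ is undistorted with respect to $X$'s cell structure. I will verify the exponential decay and cell separation conditions of Definition~\ref{def:undistorted} for $d$ directly, leveraging the corresponding conditions for $d_Y$ on $Y$. First I would refine the subdivision of $X$ so that all pieces $E_1, \ldots, E_k$ sit at a common level $L$ of the cell tree; this preserves the piecewise cellular structure without changing the individual level shifts. For each $i$, let $s_i$ denote the level of $E_i'$ minus $L$, and set $s_{\max} = \max_i |s_i|$. Then for any $n \geq L$ and any $n$-cell $E$ of $X$, $E$ descends from a unique piece $E_i$; writing $\iota(E) = i$, the image $h(E)$ is a $Y$-cell at level $n + s_i$ contained in $E_i'$. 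Only finitely many cells of $X$ lie at levels below $L$, so they contribute only bounded constants to the estimates.

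For exponential decay, if $E, E'$ are cells of $X$ at levels $m \leq n$ (both at least $L$) with $E \cap E' \ne \emptyset$, then $h(E), h(E')$ are intersecting $Y$-cells at levels $m + s_{\iota(E)}$ and $n + s_{\iota(E')}$, whose difference lies within $(n - m) \pm 2s_{\max}$. Applying $d_Y$'s exponential decay to this pair and absorbing the bounded factors $r_Y^{\pm 2s_{\max}}$ and $R_Y^{\pm 2s_{\max}}$ into new constants yields the required bounds for $d$.

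The key step is cell separation. Let $E_1, E_2$ be disjoint $n$-cells of $X$ with $n \geq L$, set $i_a = \iota(E_a)$, and assume without loss of generality $s_{i_1} \leq s_{i_2}$. Pick $a_0 \in h(E_1)$ and $b_0 \in h(E_2)$ realizing $d_Y(h(E_1), h(E_2))$, and let $D$ be the $Y$-cell descendant of $h(E_1)$ at level $n + s_{i_2}$ that contains $a_0$. Since $D \subseteq h(E_1)$ is disjoint from $h(E_2)$, the cells $D$ and $h(E_2)$ are disjoint $Y$-cells at the same level $n + s_{i_2}$. Then $d_Y$'s cell separation applied to $(D, h(E_2))$, together with $d_Y$'s exponential decay applied to $D \subseteq h(E_1)$, gives
\[
d_Y(h(E_1), h(E_2)) \geq d_Y(D, h(E_2)) \geq \delta_Y \diam(D) \geq \frac{\delta_Y\, r_Y^{2 s_{\max}}}{C_Y} \diam(h(E_1)).
\]

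The main obstacle I anticipate is the case where $E_1, E_2$ lie in pieces with different shifts $s_{i_1} \ne s_{i_2}$, since then $h(E_1), h(E_2)$ sit at different levels in $Y$ and direct same-level cell separation for $d_Y$ does not apply. The trick above sidesteps this by descending $h(E_1)$ to the $Y$-level of $h(E_2)$ via the cell $D$; this works because the shifts $s_i$ are uniformly bounded, and crucially uses the strong (unconditional) cell separation from Definition~\ref{def:undistorted} rather than the intersecting-parents form from Lemma~\ref{lem:EquivalentExponentialDecay}.
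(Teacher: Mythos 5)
Your proposal is correct and takes essentially the same route as the paper: reduce via Corollary~\ref{cor:PullbackTest} to showing the pullback metric is undistorted, use the uniform bound on the level shifts of the pieces, and handle cell separation across mismatched image levels by descending one image cell to the level of the other before invoking same-level separation in $Y$. The only real difference is that the paper verifies the four local conditions of Lemma~\ref{lem:EquivalentExponentialDecay} instead of the global conditions of Definition~\ref{def:undistorted}, which makes discarding the cells below level $L$ genuinely a finite check; in your version the mixed pairs for exponential decay (an $m$-cell with $m<L$ meeting an $n$-cell with $n\geq L$) form an infinite family not covered by "finitely many low-level cells," though the fix is a routine one-line comparison through the level-$L$ ancestor.
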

\addtocounter{theorem}{-1}
}
\begin{proof}
Suppose the metric $d$ on $Y$ is $(r,R,C,\delta)$-undistorted, and let $h\colon X\to Y$ be a piecewise cellular homeomorphism. Then there exists an $M\in\N$ so that $h$ is cellular on all $M$-cells.  Moreover, since there are finitely many such cells, there exists a $j\in \N$ so that each $M$-cell maps to a cell of level between $M-j$ and $M+j$ under $h$.

By Corollary~\ref{cor:PullbackTest}, it suffices to prove that the pullback metric $h^*(d)$ on $X$ is undistorted.  We will show that this metric satisfies the four conditions of Lemma~\ref{lem:EquivalentExponentialDecay}. Note that these conditions are automatically satisfied by any finite collection of cells, so it suffices to check these conditions for $m$-cells in $X$ satisfying $m>M$.  Note that the diameter of any cell $E$ in $X$ with respect to $h^*(d)$ is simply the diameter of $h(E)$ in~$Y$, and similarly the distance between any two cells $E_1,E_2$ in $X$ with respect to $h^*(d)$ is simply the distance between $h(E_1)$ and $h(E_2)$ in~$Y$.

For condition (1) in Lemma~\ref{lem:EquivalentExponentialDecay}, suppose $E_1$ and $E_2$ are intersecting $m$-cells in~$X$ for some $m> M$. Then $h$ maps $E_1$ and $E_2$ to cells $E_1'$ and $E_2'$, respectively, in~$Y$.  Note that $E_1'$ and $E_2'$ intersect and both have level between $m-j$ and $m+j$.  Then
\[
\frac{\diam h(E_1)}{\diam h(E_2)} = \frac{\diam(E_1')}{\diam(E_2')} \leq \max\biggl(CR^{2j},\frac{C}{r^{2j}}\biggr) = Cr^{-2j}
\]
so $\lambda=Cr^{-2j}$ suffices for all such pairs of cells.

For condition~(2) in Lemma~\ref{lem:EquivalentExponentialDecay}, let $E$ in $X$ be an $m$-cell for some $m>M$ and let $E'$ be an $(m+1)$-cell contained in~$E$.  Then there exists an $n\in\N$ so that $h(E)$ is an $n$-cell and $h(E')$ is an $(n+1)$-cell contained in $h(E)$, so
\[
\diam h(E') \geq \frac{r}{C} \diam h(E).
\]
Thus $\mu=r/C$ suffices for all such pairs of cells.

For condition (3) in Lemma~\ref{lem:EquivalentExponentialDecay}, choose an $k\in\N$ so that $CR^k<1$.  Let $E$ be an $m$-cell in $X$ with $m>M$ and let $E'$ be an $(m+k)$-cell contained in $E$. Then there exists an $n\in\N$ so that $h(E)$ is an $n$-cell and $h(E')$ is an $(n+k)$-cell contained in~$h(E)$, so
\[
\frac{\diam h(E')}{\diam h(E)} \leq CR^k < 1.
\]
Thus $\nu=CR^k$ suffices for all such pairs of cells.

Finally, for condition (4) in Lemma~\ref{lem:EquivalentExponentialDecay}, let $E_1$ and $E_2$ be disjoint $m$-cells in $X$, for some $m>M$, whose parents intersect.  Then $h(E_1)$ is an $i_1$-cell and $h(E_2)$ is an $i_2$-cell for some $i_1$ and $i_2$ between $m-j$ and $m+j$.  If $i_1\geq i_2$, let $E_2'$ be an $i_1$-cell that is contained in~$h(E_2)$ so that $d\bigl(h(E_1),E_2'\bigr)=d\bigl(h(E_1),h(E_2)\bigr)$. Then $h(E_1)$ and $E_2'$ are disjoint $i_1$-cells, so
\[
d\bigl(h(E_1),h(E_2)\bigr) = d\bigl(h(E_1),E_2'\bigr) \geq \delta \diam h(E_1).
\]
If instead $i_1\leq i_2$, let $E_1'$ be an $i_2$-cell contained in $h(E_1)$ so that $d\bigl(E_1',h(E_2)\bigr)=d\bigl(h(E_1),h(E_2)\bigr)$. Then $E_1'$ and $h(E_2)$ are disjoint $i_2$-cells, and since $i_2-i_1\leq 2j$ we have
\begin{multline*}
d\bigl(h(E_1),h(E_2)\bigr) = d\bigl(E_1',h(E_2)\bigr) \geq \delta \diam(E_1') \\ \geq \delta \frac{r^{i_2-i_1}}{C} \diam h(E_1) \geq \delta\frac{r^{2j}}{C} \diam h(E_1).\qedhere
\end{multline*}
\end{proof}

\section{Finitely ramified Julia sets}
\label{sec:selfcoveringfractal}

In this section we show how to apply the theory of quasisymmetries of finitely ramified fractals developed in the last section to the case of finitely ramified Julia sets.  We prove in Section~\ref{subsec:FiniteInvariantBranchCuts} that any connected Julia set for a hyperbolic rational function with a finite invariant branch cut has a natural finitely ramified cell structure, and in Section~\ref{subsec:FindingBranchCuts} we show that any connected Julia set for a hyperbolic polynomial has such a branch cut.  In Section~\ref{subsec:PiecewiseCanonical} we prove that a large class of piecewise canonical homeomorphisms for such Julia sets are quasisymmetries, and finally in Section~\ref{subsec:bubblebath} we apply this theory to show that the bubble bath Julia set (i.e.\ the Julia set for $1-z^{-2}$) has infinitely many quasisymmetries. 
\subsection{Finite invariant branch cuts}\label{subsec:FiniteInvariantBranchCuts}

The goal of this section is to prove the following theorem.

{
\renewcommand{\thetheorem}{\ref{thm:FinitelyRamifiedJulia}}
\begin{theorem}\textFinitelyRamifiedJulia
\end{theorem}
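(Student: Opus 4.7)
The plan is to verify the four conditions of Definition~\ref{def:finitelyramifiedcellstructure}, together with local finiteness of the tree, for the proposed cell structure.  Write $V_n = f^{-n}(S)$, so $V_n$ is a finite subset of $J_f$ and the invariance $f(S)\subseteq S$ gives the chain $V_0 \subseteq V_1 \subseteq V_2 \subseteq \cdots$.  I take the root cell to be $J_f$ and the $n$-cells for $n\geq 1$ to be the closures of the components of $J_f\setminus V_n$.

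First I would dispose of the topological conditions.  Because $f$ is hyperbolic with connected Julia set, $J_f$ is locally connected, so removing the finite set $V_n$ leaves only finitely many open connected components; this gives local finiteness of the tree and shows that each cell is compact, connected, and has the underlying component itself as a nonempty interior, verifying condition~(1).  For condition~(2), the inclusion $V_n\subseteq V_{n+1}$ shows each component of $J_f\setminus V_{n+1}$ lies in a unique component of $J_f\setminus V_n$; and if $U$ is a component of $J_f\setminus V_n$, the components of the dense open subset $U\setminus V_{n+1} \subseteq \overline{U}$ are exactly the components of $J_f\setminus V_{n+1}$ contained in $U$, so $\overline{U}$ is the union of the closures of its children (using also that $J_f$ is perfect, so points of $V_n$ lie in the closure of $J_f\setminus V_n$).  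For condition~(3), if $U$ and $U'$ are distinct components of $J_f\setminus V_n$, any point of $\overline{U}\cap\overline{U'}$ must lie in $V_n$: otherwise it would lie in some third open component $W$, which would then have to meet both $U$ and $U'$, a contradiction.  Hence the intersection of two distinct $n$-cells is contained in the finite set $V_n$.

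The main obstacle is condition~(4), that any descending chain $J_f = E_0\supseteq E_1\supseteq \cdots$ shrinks to a single point.  Here I would combine the branch cut hypothesis with hyperbolicity.  Because $V_n\supseteq V_1$ for $n\geq 1$, each $n$-cell sits inside a $1$-cell on which $f$ is injective; combined with the identity $f(J_f\setminus V_n)\subseteq J_f\setminus V_{n-1}$, an induction shows that $f^{n-1}$ maps each $n$-cell homeomorphically into a $1$-cell.  By hyperbolicity there is a conformal metric in which $f$ expands uniformly by some factor $\lambda>1$ on a neighborhood of $J_f$, so local inverse branches of $f$ defined on sufficiently small disks meeting $J_f$ contract by at least $\lambda^{-1}$.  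A standard hyperbolic dynamics estimate, extending the inverse branch of $f^{n-1}$ univalently to a definite neighborhood of the target $1$-cell in $\Chat$, then gives $\diam(E_n)\leq C\lambda^{-n}$ for some constant $C$ and all large $n$.  Combined with the compactness of each $E_n$, this forces $\bigcap_n E_n$ to be a single point.  The subtle ingredient is precisely this uniform decay estimate; the univalent extension of inverse branches of iterates of $f$ to definite neighborhoods of $1$-cells in $\Chat$ is standard in hyperbolic complex dynamics, and is the step where the hyperbolicity hypothesis is essential.
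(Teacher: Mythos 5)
There is a genuine gap at the very first step. You justify local finiteness of the tree --- i.e., that $J_f\setminus V_n$ has only finitely many components --- by appealing to local connectivity of $J_f$ alone. That implication is false as a matter of general topology: a locally connected continuum can be disconnected into infinitely many pieces by removing a single point (for instance, a dendrite with a branch point of infinite valence). The paper devotes Appendix~\ref{ap:RemovingFiniteSets} to exactly this point: Theorem~\ref{thm:FinitelyManyComponents} shows that removing a finite set of \emph{periodic or preperiodic} points from $J_f$ leaves finitely many components, and its proof genuinely uses the dynamics --- Sullivan's finiteness of periodic Fatou components together with results of Pommerenke and Petersen guaranteeing that a periodic point admits only finitely many (necessarily periodic) accesses from the Fatou set (Lemma~\ref{lem:ComplementaryComponents}). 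Your argument never invokes the fact that every point of $V_n$ is periodic or preperiodic (which follows from $S$ being finite and invariant), and that is precisely the hypothesis that makes the finiteness true; so this step requires the appendix's machinery or an equivalent argument, not just local connectivity.

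A second, smaller issue concerns condition~(4). Your plan to extend the inverse branch of $f^{n-1}$ univalently to ``a definite neighborhood of the target $1$-cell in $\Chat$'' does not work in general: a $1$-cell may separate the sphere and enclose postcritical points (already for the basilica, one of the $1$-cells surrounds the postcritical point $0$), so no simply connected neighborhood of the whole $1$-cell avoids the postcritical set, and the inverse branches of $f^{n}$ cannot be defined there. The paper instead proves a uniform contraction statement for \emph{small} connected sets (Lemma~\ref{lem:InverseCoversUniformlyContinuous}), covers $J_f$ by finitely many such sets, and uses the injectivity of $f^n$ on the interior of an $n$-cell to bound by $2m+1$ the length of a chain of preimage pieces joining any two points of that cell, yielding $\diam(E)\leq (2m+1)\delta\lambda^{-n}$. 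Your overall strategy for (4) --- uniform expansion combined with injectivity of $f^{n-1}$ on $n$-cells --- is the right one and matches the paper's, but the quoted ``standard estimate'' must be implemented at small scales and then chained, rather than applied to a whole $1$-cell at once.
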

\addtocounter{theorem}{-1}
}

To prove this theorem, we need the following basic lemma about covering maps, which says roughly that the inverse of a finite-sheeted covering map is uniformly continuous when viewed as a multivalued function.

\begin{lemma}\label{lem:InverseCoversUniformlyContinuous}
Let $f\colon X\to Y$ be a covering map between compact, connected, locally connected metric spaces, and let $\epsilon>0$.  Then there exists a $\delta>0$ such that for every connected set $C\subset Y$ of diameter less than~$\delta$, each component of $f^{-1}(C)$ has diameter less than $\epsilon$ and maps homeomorphically to~$C$ under~$f$.
\end{lemma}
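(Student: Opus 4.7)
The plan is to combine the standard even covering structure for $f$ with a Lebesgue number argument on~$Y$. The key point is that, because $C$ is connected, each component of $f^{-1}(C)$ will sit inside a single sheet of an evenly covered neighborhood, which will give both the homeomorphism property and the diameter bound.

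First, I would fix $\epsilon>0$ and, for each $y\in Y$, use the covering property to choose an open neighborhood $W_y$ of $y$ which is evenly covered by~$f$, writing $f^{-1}(W_y)=\bigsqcup_{i} V_{y,i}$ with each $f|_{V_{y,i}}\colon V_{y,i}\to W_y$ a homeomorphism. Since $Y$ is compact and metric, its fibers under a covering map are finite, so the index set is finite. For each $i$, pick an open set $N_{y,i}\subseteq V_{y,i}$ about the corresponding preimage of~$y$ whose diameter in~$X$ is less than~$\epsilon$ (possible since $X$ is a metric space). Set $W_y'=\bigcap_{i}f(N_{y,i})$, an open neighborhood of~$y$. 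Using local connectedness of~$Y$, shrink $W_y'$ to a connected open neighborhood $U_y\subseteq W_y'$ of~$y$. Then $f^{-1}(U_y)=\bigsqcup_{i}U_y^{(i)}$ where each $U_y^{(i)}\subseteq N_{y,i}$, so $\operatorname{diam}(U_y^{(i)})<\epsilon$ and $f|_{U_y^{(i)}}\colon U_y^{(i)}\to U_y$ is a homeomorphism.

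Next, by compactness of~$Y$ the cover $\{U_y\}_{y\in Y}$ admits a finite subcover $U_{y_1},\dots,U_{y_k}$. Let $\delta>0$ be a Lebesgue number for this subcover with respect to the metric on~$Y$; thus every subset of $Y$ of diameter less than $\delta$ is contained in some~$U_{y_j}$.

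Now let $C\subseteq Y$ be connected with $\operatorname{diam}(C)<\delta$. Choose $j$ with $C\subseteq U_{y_j}$, and write $f^{-1}(U_{y_j})=\bigsqcup_{i}U_{y_j}^{(i)}$ as above. Then
\[
f^{-1}(C)=\bigsqcup_{i}\bigl(f^{-1}(C)\cap U_{y_j}^{(i)}\bigr),
\]
and each piece $f^{-1}(C)\cap U_{y_j}^{(i)}$ is the image of~$C$ under the continuous inverse $(f|_{U_{y_j}^{(i)}})^{-1}$, hence is connected. Because the $U_{y_j}^{(i)}$ are pairwise disjoint open sets in~$X$, each such piece is clopen in $f^{-1}(C)$ and therefore is a connected component of~$f^{-1}(C)$. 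Each component is contained in some $U_{y_j}^{(i)}$, so has diameter less than~$\epsilon$, and maps homeomorphically onto~$C$ as the restriction of the homeomorphism $f|_{U_{y_j}^{(i)}}$.

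The only potential obstacle is ensuring that the components of $f^{-1}(C)$ coincide exactly with the sets $f^{-1}(C)\cap U_{y_j}^{(i)}$; this is where connectedness of $C$ and disjointness of the sheets $U_{y_j}^{(i)}$ are both essential. Local connectedness of~$Y$ is used to refine evenly covered neighborhoods to connected ones, which is what lets the Lebesgue number argument control the sheets uniformly; without it the proof would need to track finer separation data between sheets.
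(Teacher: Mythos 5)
Your argument is correct and takes essentially the same route as the paper's: both reduce to a finite cover of $Y$ by evenly covered neighborhoods and apply a Lebesgue number, the only difference being that the paper obtains the diameter bound from uniform continuity of the finitely many inverse branches on compact subsets, while you pre-shrink each neighborhood so that its sheets already have diameter less than $\epsilon$ --- a cosmetic variation. One minor slip: the finiteness of the fibers follows from compactness of $X$ (each fiber is closed and discrete in the compact space $X$), not from compactness of $Y$; compare $\mathbb{R}\to S^1$.
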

\begin{proof}
Since $X$ and $Y$ are both compact, the covering map $f$ must have some finite degree $d\geq 1$. Since $Y$ is compact and locally connected we can choose an open cover $\{U_1,\ldots,U_n\}$ of $Y$ by connected open sets that are evenly covered by~$f$. For each~$i$, let $U_{i,1}',\ldots,U_{i,d}'$ be the components of  $f^{-1}(U_i)$, and note that $f$ maps each $U_{i,j}'$ homeomorphically to~$U_i$.  Let $g_{i,j}\colon U_i\to U_{i,j}'$ denote the inverse of this homeomorphism, and choose an open cover $W_1,\ldots,W_n$ of $Y$ so that $\overline{W_i}\subseteq U_i$ for each~$i$. By compactness, each of the finitely many maps $g_{i,j}$ is uniformly continuous on $\overline{W_i}$, so there exists a single $\delta_1>0$ such that each $g_{i,j}$ maps subsets of $\overline{W_i}$ of diameter less than $\delta_1$ to subsets $U_{i,j}'$ of diameter less than~$\epsilon$.  Let $\delta_2>0$ so that every subset of~$Y$ of diameter less than $\delta_2$ is contained in one of the~$W_i$.  Then $\delta=\min(\delta_1,\delta_2)$ suffices.
\end{proof}

\begin{proof}[Proof of Theorem~\ref{thm:FinitelyRamifiedJulia}]
Suppose $J_f$ has a finite invariant branch cut~$S$. Let $V_n=f^{-n}(S)$ for each $n\geq 1$, and note that $S\subseteq V_1 \subseteq V_2\subseteq \cdots$ since $S$ is invariant.  Define the $n$-cells of $J_f$ to be the closures of the components of~$J_f\setminus V_n$.  Since $S$ is finite and invariant, each $V_n$ is a finite set of periodic and preperiodic points, so by Theorem~\ref{thm:FinitelyManyComponents} there are finitely many $n$-cells for each~$n$. We must show that these cells satisfy the conditions in Definition~\ref{def:finitelyramifiedcellstructure}. Clearly each $n$-cell is compact and connected, and Axioms (1) through (3) hold.    To prove Axiom~(4), it suffices to show that there exists a metric $d$ on $J_f$ and constant $\kappa>0$ so that each component of $J_f\setminus V_n$ has diameter less than $\kappa\lambda^{-n}$ with respect to~$d$.

Since $f$ is hyperbolic, there exists a metric $d$ on $J_f$ with respect to which $f$ is expanding. That is, there exist constants $\epsilon >0$ and $\lambda>1$ so that $d\bigl(f(p),f(q)\bigr) \geq \lambda\,d(p,q)$ for all points $p,q\in J_f$ with $d(p,q)<\epsilon$. By Lemma~\ref{lem:InverseCoversUniformlyContinuous}, there exists a $\delta>0$ so that for any connected set~$C\subseteq J_f$ of diameter less than~$\delta$, each component of $f^{-1}(C)$ has diameter less than $\epsilon$ and maps homeomorphically to~$C$.  In this case, it follows that $f^{-1}(C)$ has diameter at most $\lambda^{-1}\diam(C)$.  Indeed, for every $n\geq 1$ each component of $f^{-n}(C)$ has diameter at most $\lambda^{-n}\diam(C)$ and maps homeomorphically to $C$ under~$f^n$.

Let $S=\{s_1,\ldots,s_k\}$.  Since $f$ is hyperbolic and $J_f$ is connected, $J_f$ is locally connected \cite[Theorem~19.2]{Milnor}. It is also compact, so we can choose a finite cover $\{A_1,\ldots,A_k,B_1,\ldots,B_m\}$ of $J_f$ by connected sets of diameter less than $\delta$ so that each $s_i$ is contained only in $A_i$ and the sets $A_1,\ldots,A_k$ are pairwise disjoint.  We will prove that each $n$-cell in $J_f$ has diameter at most $(2m+1)\delta\lambda^{-n}$.

Let $E$ be an $n$-cell of $X$, let $A_1',\ldots,A_K'$ be the components of the $f^{-n}(A_i)$'s that intersect $E$, and let $B_1',\ldots,B_M'$ be the components of the $f^{-n}(B_i)$'s that intersect~$E$.  Since $S$ is branch cut, we know that $f^n$ maps the interior of $E$ homeomorphically to some component of $J_f\setminus S$.  In particular, for each $i$ at most one component of $f^{-n}(B_i)$ is contained in $E$, and therefore $M\leq m$.  Let $x$ and $y$ be points in $E$.  Since $E$ is connected, there exists a sequence $C_1,\ldots,C_j$ of distinct elements of  $\{A_1',\ldots,A_K',B_1',\ldots,B_M'\}$ such that $x\in C_1$, $y\in C_j$, and $C_i\cap C_{i+1}\ne \emptyset$ for each~$i$.  Since the sets $A_1',\ldots,A_K'$ are pairwise disjoint, no two consecutive $C_i$'s lie in $\{A_1',\ldots,A_K'\}$.  It follows that $j\leq 2M+1\leq 2m+1$.  But each $A_i'$ or $B_i'$ has diameter at most $\lambda^{-n}\delta$, and therefore $d(x,y)\leq (2m+1)\lambda^{-n}\delta$.  We conclude that each $n$-cell of $X$ has diameter at most $(2m+1)\lambda^{-n}\delta$, so our $n$-cells satisfy Axiom~(4) in the definition of a finitely ramified cell structure.
\end{proof}

\begin{remark}
Though we have stated and proven Theorem~\ref{thm:FinitelyRamifiedJulia} in the context of Julia sets for hyperbolic rational maps, the same proof applies to any compact, connected, locally connected metric space $X$ with an expanding self-covering $f\colon X\to X$ and a finite invariant branch cut $S$ for which $X\setminus S$ has finitely many connected components.
\end{remark}

\begin{remark}
One might hope to weaken the hypothesis of Theorem~\ref{thm:FinitelyRamifiedJulia} by removing the requirement that the branch cut be invariant.  Specifically, given any finite branch cut $S\subset J_f$, one can put a finitely ramified cell structure on $J_f$ by defining the $n$-cells be the closures of the complementary components of the set $\bigcup_{i=1}^n f^{-i}(S)$.  Unfortunately, the resulting cell structures are not particularly useful since the diameters of the cells can vary wildly.  For example, if $f(z)=z^2$, then $J_f$ is the unit circle and any one-point set $\{p\}\subset J_f$ is a branch cut. However, there exist points $p$ on the unit circle such that $0<|p-p^{2^n}|<1/n!$ for infinitely many values of~$n$.  In this case, the resulting cell structure has $n$-cells of diameter less than $1/n!$ for infinitely many values of~$n$. So the restriction to the circle of the Euclidean metric is not undistorted with respect to this cell structure since it fails the exponential decay condition of Definition \ref{def:undistorted}.  Because of this pathology, it is not clear how branch cuts which are not invariant can be used to construct quasisymmetries of Julia sets.
\end{remark}

\subsection{Finding invariant branch cuts}\label{subsec:FindingBranchCuts}

In this section we prove several propositions for showing that a given Julia set has a finite invariant branch cut, and use them to prove that any connected Julia set for a hyperbolic polynomial has such a branch cut.

\begin{proposition}\label{prop:MakingBranchCuts}
Let $f\colon\Chat\to\Chat$ be a hyperbolic rational map with connected Julia set~$J_f$, and let $S\subset J_f$ be a finite set satisfying $f(S)\subseteq S$.  If there exists a finite connected graph\/ $\Gamma\subset\Chat$ that contains the critical values of $f$ such that\/ $\Gamma\cap J_f\subseteq S$, then $S$ is a finite invariant branch cut for~$J_f$.
\end{proposition}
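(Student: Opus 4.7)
The plan is to reduce the injectivity claim to the observation that $f$ is an unramified covering away from its critical values, combined with the fact that the complementary faces of $\Gamma$ in $\Chat$ are simply connected.

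The first step is to verify the key inclusion $J_f\setminus f^{-1}(S)\subseteq \Chat\setminus f^{-1}(\Gamma)$. This is immediate from full invariance of $J_f$: if $z\in J_f\cap f^{-1}(\Gamma)$, then $f(z)\in J_f\cap\Gamma\subseteq S$, so $z\in f^{-1}(S)$. This inclusion lets me pass from the combinatorics of $S$ on $J_f$ to the topology of $\Gamma$ on the whole sphere.

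Next, because $\Gamma$ contains all critical values of $f$, the restriction
\[
f\colon \Chat\setminus f^{-1}(\Gamma)\to\Chat\setminus \Gamma
\]
is an unramified covering map. Since $\Gamma$ is a finite connected graph embedded in the sphere, every component of $\Chat\setminus\Gamma$ is an open topological disk, hence simply connected. Consequently, by standard covering-space theory, for each component $V$ of $\Chat\setminus\Gamma$, every connected component of $f^{-1}(V)$ maps homeomorphically onto $V$ under $f$.

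To finish, let $U$ be any component of $J_f\setminus f^{-1}(S)$. By the first step, $U$ lies in $\Chat\setminus f^{-1}(\Gamma)$, and since $U$ is connected it is contained in a single component $W$ of $\Chat\setminus f^{-1}(\Gamma)$. Since $f|_W$ is a homeomorphism onto the corresponding face of $\Gamma$, the restriction $f|_U$ is injective. Together with the hypotheses that $S$ is finite and $f(S)\subseteq S$, this shows that $S$ is a finite invariant branch cut for~$J_f$.

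The only real obstacle is the topological input that a finite connected graph embedded in $S^2$ cuts the sphere into simply connected faces; with that in hand, the rest is essentially formal. I would also carefully verify that restricting a covering map to an open subset of the base (along with its full preimage) still yields a covering map, which is exactly what promotes the local injectivity of $f$ to global injectivity on each component~$W$.
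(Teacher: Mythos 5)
Your proof is correct and follows essentially the same route as the paper's: cut the sphere along $\Gamma$ into simply connected faces, observe that $f$ restricted to the complement of $f^{-1}(\Gamma)$ is an unramified covering (since $\Gamma$ contains the critical values), and conclude that each component of the preimage of a face maps homeomorphically onto that face, which forces injectivity of $f$ on each component of $J_f\setminus f^{-1}(S)$. The only cosmetic difference is that the paper invokes the Riemann--Hurwitz formula to see that each face is evenly covered, whereas you use covering-space theory over a simply connected base; both are standard and equivalent here.
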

\begin{proof}
Let $\Gamma$ be such a graph.  Then $\Chat\setminus \Gamma$ is a disjoint union of finitely many open topological disks.  By the Riemann--Hurwitz formula \cite[\S5.4]{Beardon}, the preimage of each such disk $U$ is again a finite disjoint union of open topological disks, and since the critical values of $f$ are contained in $f^{-1}(\Gamma)$, each of these disks maps homeomorphically to~$U$.  

Now, if $C$ is a connected component of $J_f\setminus S$, then $C$ must be contained in some component $U$ of~$\Chat\setminus \Gamma$. Then $f^{-1}(C)$ has one component $C'$ in each component $U'$ of $f^{-1}(U)$, and each such $C'$ maps homeomorphically to $C$ since $U'$ maps homeomorphically to~$U$.  Thus each connected component of $J_f\setminus S$ is evenly covered by~$f$, so $S$ is a branch cut.
\end{proof}

Our present aim is to apply Proposition~\ref{prop:MakingBranchCuts} to Julia sets of polynomials, but it can also be useful for showing that the Julia sets for non-polynomial rational functions are finitely ramified.  For example, we will use Proposition~\ref{prop:MakingBranchCuts} in Section~\ref{subsec:bubblebath} to show that the bubble bath Julia set has a finitely ramified cell structure.

\subsubsection*{External rays}\label{ref:ExternalRays}
For the next lemma we need to recall the definition of rays, which we will use again in Section~\ref{sec:JuliaSetsInfinitelyManyQuasi}.  Recall that if $f\colon \Chat\to\Chat$ is a rational map with connected Julia set $J_f$, then every Fatou component $U$ for~$f$ must be homeomorphic to an open disk. If we choose a center point $c\in U$, then there exists a Riemann map $\Phi\colon \D\to U$ such that $\Phi(0)=c$, where $\D$ is the open unit disk.  In this case, the images under $\Phi$ of the radial lines in $\D$ are called the \newword{rays} from $c$.

If $f$ is hyperbolic, then $J_f$ is locally connected \cite[Lemma~19.2]{Milnor}, and indeed the boundary $\partial U$ of any Fatou component is locally connected~\cite[Lemma~19.3]{Milnor}.  In this case, by a theorem of Carath\'{e}odory, the map $\Phi$ extends to a continuous surjection $\Phi\colon \overline{\D}\to \overline{U}$, where $\partial\D$ maps onto $\partial U$ \cite[Theorem~17.14]{Milnor}.  If $r$ is a radial line in $\D$ from $0$ to a point $p\in\partial\D$, then the corresponding ray $\Phi(r)$ is said to \newword{land} at the point~$\Phi(p)\in \partial U$.

In the case where $f$ is a polynomial with connected Julia set and $U$ is the Fatou component that contains~$\infty$, the rays from $\infty$ are known as \newword{external rays}.  If $f$ is hyperbolic, then each external ray lands at a point of $J_f$, and every point of $J_f$ is a landing point for at least one external ray~\cite[Theorem~18.3]{Milnor}


\subsubsection*{Branch cuts for polynomial Julia sets}
The following lemma will be our main tool for finding finite invariant branch cuts for polynomial Julia sets. 

\begin{lemma}\label{lem:PolynomialBranchCuts}
Let $f\colon \C\to\C$ be a hyperbolic polynomial with connected Julia set~$J_f$, and let $S\subset J_f$ be a finite set satisfying $f(S)\subseteq S$.  Then $S$ is a branch cut for $J_f$ if and only if it intersects the boundary of each Fatou component for $f$ that contains a critical value.
\end{lemma}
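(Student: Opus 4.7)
The plan is to prove the two directions of the biconditional separately.

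For the forward (``if'') direction, I will deduce that $S$ is a branch cut from Proposition~\ref{prop:MakingBranchCuts} by constructing a finite connected graph $\Gamma \subset \Chat$ containing every critical value of $f$ and satisfying $\Gamma \cap J_f \subseteq S$. For each Fatou component $U$ containing a critical value, the hypothesis provides a point $p_U \in S \cap \partial U$. For each finite critical value $v$ lying in such a component $U$, I will connect $v$ to $p_U$ by an arc $\gamma_v \subset \overline{U}$ whose interior lies in $U$; this arc exists because Carath\'eodory's theorem, applicable since $\partial U$ is locally connected by hyperbolicity of $f$, extends the Riemann map $\D \to U$ continuously to the closures, so one may pull back a radial segment. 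For the basin of infinity $U_\infty$ I take $\gamma_\infty$ to be the external ray from $\infty$ landing at $p_{U_\infty}$, and for each bounded Fatou component $U$ in play I additionally include an external ray $r_U \subset \overline{U_\infty}$ from $\infty$ landing at $p_U$. The union $\Gamma$ of all these arcs is then a finite graph (only finitely many critical values appear), connected because every constituent arc meets $\infty$, it contains every critical value of $f$ by construction, and $\Gamma \cap J_f = \{p_U\} \subseteq S$ since the interior of each $\gamma_v$ and each $r_U$ lies entirely in a Fatou component.

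For the reverse (``only if'') direction I argue contrapositively. Suppose some Fatou component $U$ containing a critical value $v$ satisfies $S \cap \partial U = \emptyset$; I will produce a component of $J_f \setminus f^{-1}(S)$ on which $f$ fails to be injective. Pick a critical point $c$ with $f(c) = v$, let $k \geq 2$ be its local degree, and let $V$ be the Fatou component containing $c$, so that $f(V) = U$ and $f|_V$ is a branched covering of degree at least $k$. Since $f$ is hyperbolic, $c$ lies in the interior of $V$ and no critical points of $f$ lie on $\partial V$, so $f|_{\partial V}\colon \partial V \to \partial U$ is an unbranched finite-sheeted covering of degree at least $k$, and in particular is not injective. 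Because $f(\partial V) \subseteq \partial U$ and $S \cap \partial U = \emptyset$, we have $\partial V \cap f^{-1}(S) = \emptyset$. Since $J_f$ is connected, $V$ is simply connected and $\partial V$ is connected, so $\partial V$ lies in a single component $C'$ of $J_f \setminus f^{-1}(S)$. The restriction $f|_{C'}$ is then not injective, contradicting the assumption that $S$ is a branch cut.

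I expect the main technical obstacle to be the careful construction of $\Gamma$ in the forward direction, specifically verifying that the arcs $\gamma_v$ exist with interiors in $U$ and that the external rays land at the prescribed points $p_U$. These facts are standard consequences of local connectivity together with Carath\'eodory's theorem and the external ray theory reviewed in Section~\ref{ref:ExternalRays}, but the bookkeeping for cases where several critical values share a Fatou component, or where $U = U_\infty$, needs some care. In the reverse direction the main subtlety is that $f|_{\partial V}$ genuinely has degree at least $2$, which relies on hyperbolicity to keep all critical points off $\partial V$ and on simple connectivity of $V$ (a consequence of $J_f$ being connected) to ensure $\partial V$ is connected.
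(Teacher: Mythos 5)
Your proposal is correct and follows essentially the same route as the paper: sufficiency is obtained by building a finite connected graph $\Gamma$ out of arcs from the critical values to points of $S$ on the relevant component boundaries together with external rays landing at those points, and then invoking Proposition~\ref{prop:MakingBranchCuts}; necessity comes from the observation that $f$ restricts to a degree-$\geq 2$ covering $\partial V\to\partial U$ of Jordan curves, which cannot be injective on the single component of $J_f\setminus f^{-1}(S)$ containing the connected set $\partial V$. The only cosmetic difference is that the paper phrases the necessity step in terms of $\partial U$ not being evenly covered, whereas you contradict the definition directly on the component containing $\partial V$; both rest on the same covering-degree fact.
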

\begin{proof}
Note that $S$ always intersects the boundary of the unbounded Fatou component for~$f$, since this boundary is the entire Julia set, so we need only worry about the bounded Fatou components for $f$ that contain critical values.  Since $f$ is a hyperbolic polynomial, it follows from \cite[Theorem~133.3]{Steinmetz} that each bounded Fatou component of $f$ is a Jordan domain.  Any bounded Fatou component $U$ that contains a critical value is the image of some bounded Fatou component $V$ that contains a critical point, and the restriction $f\colon \partial V\to\partial U$ is a covering map of Jordan curves of degree two or greater. In particular, no open set containing $\partial U$ is evenly covered by $f$, so any branch cut for $J_f$ must intersect~$\partial U$.

For the converse, suppose that $S$ intersects the boundary of each bounded Fatou component that contains a critical value.  Let $p_1,\ldots,p_m$ be the critical values of $f$ in $\C$, which may or may not be contained in distinct Fatou components.  For each~$i$, let $s_i$ be a point of $S$ on the boundary of the Fatou component that contains~$p_i$, and let $\alpha_i$ be an arc connecting $p_i$ and $s_i$ that intersects the Julia set only at~$s_i$.  Note that if some Fatou component $U$ contains more than one critical value, we can arrange for the corresponding $\alpha_i$'s to not intersect in~$U$.  Let $\Gamma$ be a finite graph in $\Chat$  consisting of the union of the $\alpha_i$'s together with one external ray landing at each~$s_i$, plus the point at~$\infty$.  Then $\Gamma$ contains all the critical values of $f$ and $\Gamma\cap J_f\subseteq S$, so by Proposition~\ref{prop:MakingBranchCuts} we conclude that $S$ is a branch cut for the restriction $f\colon J_f\to J_f$.
\end{proof}

\begin{remark}
In some cases, it is helpful to have a finite invariant branch cut $S$ with the special property that closure of each component of $f^{-1}(S)$ (i.e.~each 1-cell) maps homeomorphically under~$f$.  In the polynomial case, it is not hard to show that this occurs if and only if $S$ includes at least two points from the boundary of each Fatou component that contains a critical value.
\end{remark}

The following proposition is a precise version of the well-known principle that connected Julia sets for hyperbolic polynomials are finitely ramified.

\begin{proposition}\label{prop:PolynomialJuliaSetsFinitelyRamified}Any connected Julia set for a hyperbolic polynomial has a finite invariant branch cut.
\end{proposition}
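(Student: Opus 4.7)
The plan is to invoke Lemma~\ref{lem:PolynomialBranchCuts}, which reduces the task to producing a finite set $S\subseteq J_f$ with $f(S)\subseteq S$ that meets the boundary of every Fatou component containing a critical value. Since $f$ has finite degree, there are only finitely many critical values, hence only finitely many Fatou components $U_1,\ldots,U_m$ containing critical values. The strategy is to pick one preperiodic boundary point in each $U_i$ and then close up under forward iteration.

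First, for each such $U_i$ I would produce a single preperiodic point $p_i \in \partial U_i$. By Sullivan's no-wandering-domains theorem together with hyperbolicity, the forward iterates $U_i, f(U_i), f^2(U_i), \ldots$ eventually enter a periodic cycle of Fatou components, so there exist $n_i \geq 0$ and $k_i \geq 1$ with $V_i := f^{n_i}(U_i)$ periodic of period~$k_i$. Because $f$ is hyperbolic, every periodic Fatou component is the immediate basin of an (super-)attracting cycle, and by Fatou's theorem such a cycle captures at least one critical point. Consequently the total local degree of $f^{k_i}$ around the cycle is at least $2$, so the restriction of $f^{k_i}$ to $\partial V_i$ is an orientation-preserving degree-$d$ self-covering of the Jordan curve $\partial V_i$ with $d\geq 2$ (using again that $\partial V_i$ is a Jordan curve, as in the proof of Lemma~\ref{lem:PolynomialBranchCuts}). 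A standard degree-theoretic argument --- lift to $\mathbb{R}$ and apply the intermediate value theorem to $\widetilde{f^{k_i}}(x)-x$, whose total increase over a fundamental domain is $d-1 \geq 1$ --- yields a fixed point $q_i \in \partial V_i$ of $f^{k_i}$. Pulling $q_i$ successively back through the boundary covers $\partial f^{j-1}(U_i) \to \partial f^j(U_i)$ for $j=1,\ldots,n_i$ produces a point $p_i \in \partial U_i$ with $f^{n_i}(p_i)=q_i$, which is therefore preperiodic.

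Finally, I would set $S$ equal to the union of the forward orbits of $p_1,\ldots,p_m$. Since each $p_i$ is preperiodic, its forward orbit has at most $n_i + k_i$ elements, so $S$ is finite. By construction $f(S) \subseteq S$, and $p_i \in S \cap \partial U_i$ for each~$i$, so Lemma~\ref{lem:PolynomialBranchCuts} immediately gives that $S$ is a finite invariant branch cut for $J_f$.

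The main obstacle in carrying out this plan is justifying that every periodic cycle of Fatou components encountered by the iterates of some $U_i$ has boundary self-covering of degree at least $2$; this rests on combining the Fatou/Sullivan classification of periodic Fatou components with the fact that the immediate basin of an attracting cycle necessarily captures a critical point, which together are available precisely because $f$ is a hyperbolic polynomial. Once this is secured, the circle fixed-point argument and the pullback of preperiodic points through covering maps on Jordan curves are routine.
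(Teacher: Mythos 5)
Your proposal is correct and follows essentially the same route as the paper: reduce to Lemma~\ref{lem:PolynomialBranchCuts}, use Sullivan's no-wandering-domains theorem and the fact that every attracting basin of a hyperbolic polynomial captures a critical point to get a degree~$\geq 2$ self-covering of the Jordan-curve boundary of each periodic component, extract a fixed point there, pull back to a preperiodic boundary point, and take the union of forward orbits. The only difference is cosmetic (you work directly with components containing critical values rather than with images of components containing critical points), and you make explicit the circle fixed-point and pullback steps that the paper leaves implicit.
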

\begin{proof}
Let $f$ be a hyperbolic polynomial with connected Julia set~$J_f$.  We claim first that each bounded Fatou component for $f$ has at least one periodic or pre-periodic point on its boundary.

By \cite[Theorem~133.3]{Steinmetz}, each bounded Fatou component of $f$ is a Jordan domain.  Since $f$ is hyperbolic, every orbit in the Fatou set converges to an attracting periodic orbit \cite[Theorem~19.1]{Milnor}, so every periodic Fatou component contains a point from an attracting cycle.  By \cite[Theorem~8.6]{Milnor}, it follows that every periodic cycle of bounded Fatou components contains a critical point.  In particular, if $U$ is a Fatou component of period $n$, then $f^n$ maps the Jordan curve $\partial U$ to itself by a covering map of degree two or greater, and hence $f^n$ has a fixed point on~$\partial U$.  Thus every periodic Fatou component has at least one periodic point on its boundary.  By Sullivan's nonwandering domain theorem \mbox{\cite[Theorem~16.4]{Milnor}}, every Fatou component is either periodic or pre-periodic, and therefore every bounded Fatou component has at least one periodic or pre-periodic point on its boundary.

Let $U_1,\ldots,U_n$ be the bounded Fatou components of $f$ that contain critical points.   By the argument in the previous paragraph, each $f(U_i)$ has at least one periodic or pre-periodic point on its boundary.  In particular, there exists a finite invariant set $S\subset J_f$ which intersects the boundary of each~$f(U_i)$.  By Lemma~\ref{lem:PolynomialBranchCuts}, it follows that $S$ is a branch cut for~$J_f$, so $J_f$ is finitely ramified.
\end{proof}

Note that there is no analog of  Proposition~\ref{prop:PolynomialJuliaSetsFinitelyRamified} for connected Julia sets of hyperbolic rational functions, since these are not always finitely ramified.  For example, there exists a hyperbolic rational function whose Julia set is homeomorphic to a Sierpi\'{n}ski carpet~\cite[Appendix~F]{MilnorQuadraticRational}.  Such a Julia set cannot be finitely ramified, since the complement of finitely many points in a Sierpi\'{n}ski carpet is always connected.

\subsection{Piecewise canonical homeomorphisms}\label{subsec:PiecewiseCanonical}

In this section we prove Theorems \ref{thm:JuliaSetUndistorted} and \ref{thm:PiecewiseCanonicalQuasisymmetries}, which state that the Euclidean metric on a finitely ramified Julia set is undistorted, and that therefore piecewise canonical homeomorphisms of such a Julia set are quasisymmetries.  These will be our main tools for proving that Julia sets have infinitely many quasisymmetries in Section~\ref{subsec:bubblebath} and in Section~\ref{sec:JuliaSetsInfinitelyManyQuasi}.

{
\renewcommand{\thetheorem}{\ref{thm:JuliaSetUndistorted}}
\begin{theorem}\textJuliaSetUndistorted
\end{theorem}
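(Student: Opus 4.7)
The plan is to verify the hypotheses of Proposition~\ref{prop:undistorted} with $\mathcal{M}$ the finite collection consisting of $J_f$ together with all $1$-cells of the induced cell structure, and with the ambient metric being the spherical metric on $\Chat$. The key initial observation is a \emph{cellular expansion property}: since $S$ is an invariant branch cut, for every $n$-cell $E'$ with $n\geq 1$ the iterate $f^{n-1}$ restricts to a cellular homeomorphism from $E'$ onto some $1$-cell $E_0\in\mathcal{M}$, and similarly a neighboring pair of $n$-cells is the image of a neighboring pair of $1$-cells under a single cellular branch of $f^{-(n-1)}$. The injectivity part follows because $E'$ lies inside a unique $1$-cell and $f$ is injective on each $1$-cell; the ``cellular'' part follows from the set-theoretic identity $f^{n-1}\bigl(f^{-(n+k)}(S)\cap\mathrm{int}(E')\bigr)=f^{-(k+1)}(S)\cap\mathrm{int}(E_0)$. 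This reduces Proposition~\ref{prop:undistorted} to showing that these inverse branches of iterates of $f$ are uniformly quasisymmetric on the finitely many cells in $\mathcal{M}$.

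To establish the uniformity, for each $1$-cell $E_0$ and each neighboring pair $(E_1,E_2)$ of $1$-cells I would produce an open neighborhood $U\subset\Chat$ of $E_0$ (resp.\ $E_1\cup E_2$) that is simply connected and disjoint from the postcritical set $P(f)$. Such a $U$ exists because $f$ is hyperbolic, so $P(f)$ lies in the Fatou set and is separated from $J_f$; using the expanding property of $f$ on a neighborhood of $J_f$ one can ensure that $U$ may be chosen simply connected. Since $U$ is simply connected and misses $P(f)$, any inverse branch of $f^k$ initially defined at a point of $E_0$ extends to a single-valued holomorphic function on all of $U$, and this extension is automatically univalent because it is a right inverse of $f^k$.

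The final step is to invoke the Koebe distortion theorem. Univalent holomorphic functions on a fixed simply connected proper subdomain of $\Chat$ form a normal family (after suitable normalization) and enjoy uniform quasisymmetric distortion on any compact subset. Quantitatively, there is a homeomorphism $\eta_U\colon[0,\infty)\to[0,\infty)$ depending only on the pair $(U,E_0)$ such that every univalent holomorphic function on $U$ restricts to an $\eta_U$-quasisymmetry on $E_0$ in the spherical metric; the analogous statement holds for neighboring pairs. Taking $\eta$ to be a common upper bound over the finitely many choices parameterized by $\mathcal{M}$ yields a single homeomorphism $\eta$ satisfying the hypotheses of Proposition~\ref{prop:undistorted}, and the theorem follows.

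The main obstacle will be the construction, in the second step, of simply connected neighborhoods $U$ of each $1$-cell (or neighboring pair) that avoid $P(f)$. A single $1$-cell can have complicated topology relative to the rest of $\Chat$, so securing a simply connected neighborhood that also avoids $P(f)$ requires care. The natural way around this is to exploit the expanding property of $f$: one first selects a small (possibly non-simply-connected) neighborhood $V$ of $J_f$ disjoint from $P(f)$ on which branches of $f^{-1}$ contract, and then extracts $U$ as a suitable simply connected piece of $V$, using expansion to ensure that the iterated inverse branches remain in $V$ and land near $J_f$.
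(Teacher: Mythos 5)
Your overall architecture is the same as the paper's: reduce to Proposition~\ref{prop:undistorted} by expressing deep cells as images of finitely many reference cells under inverse branches of iterates of $f$, extend those branches conformally over simply connected neighborhoods disjoint from the postcritical set, and get a uniform distortion function from Koebe/egg-yolk type estimates on compact subsets. However, there is a genuine gap, and it is exactly the one you flag: with $\M$ taken to be $J_f$ and the $1$-cells, the simply connected neighborhoods $U$ you need do not exist in general, and no amount of care can produce them. Take the basilica $f(z)=z^2-1$: the central $1$-cell contains the entire Jordan curve $\partial U_0$, where $U_0$ is the Fatou component containing the critical point $0$. Any open set $U\subseteq\Chat$ containing this $1$-cell contains $\partial U_0$; if $U$ is simply connected, its complement is connected, hence lies in one of the two complementary Jordan domains of $\partial U_0$, so $U$ contains the other one --- which forces $U$ to contain either $0$ or $\infty$, both postcritical. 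So every simply connected neighborhood of that $1$-cell meets $P(f)$, and your proposed workaround (extracting a simply connected piece of a small neighborhood $V$ of $J_f$) cannot help, since the obstruction is topological, not metric. The paper avoids this by choosing the reference level deep: it sets $\delta'=\min(\delta,d(J_f,P_f))$ (with $\delta$ an injectivity scale for $f$ on $J_f$), picks $N$ so that every $N$-cell has diameter less than $\delta'/2$, takes $\M$ to be all cells of level at most $N$, and then uses round disks of radius $\delta'$ centered on each reference cell (or neighboring pair): these are automatically simply connected, contain the cell, and miss $P_f$. Cells of level at most $N$ are handled trivially by the identity, and cells of level $n>N$ are mapped to level-$N$ cells by $f^{n-N}$.

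A secondary point: your ``cellular expansion property'' asserts that a neighboring pair of $n$-cells is carried by $f^{n-1}$ homeomorphically onto a neighboring (in particular distinct) pair of $1$-cells. The branch-cut property only gives injectivity of $f^{n-1}$ on each cell separately; injectivity on the union of two intersecting cells, and distinctness of the two image cells, need an argument. In the paper this comes for free because all the relevant unions have diameter below the injectivity scale $\delta$ (again thanks to working at level $N$), so $f^{n-N}$ is injective on the union and the images form a genuine neighboring pair. If you repair the main gap by descending to a sufficiently deep reference level in the same way, this issue disappears as well, and the rest of your argument (conformal extension of inverse branches over the postcritically-free disks, uniform quasisymmetric distortion on the finitely many reference configurations, then Proposition~\ref{prop:undistorted}) goes through as in the paper.
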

\addtocounter{theorem}{-1}
}
\begin{proof}
Note first that, since the Julia set for a hyperbolic rational map cannot be the entire Riemann sphere, we can conjugate $f$ by a M\"{o}bius transformation to ensure that $\infty\notin J_f$.  In this case, the restrictions to $J_f$ of the spherical metric on $\Chat$ and the Euclidean metric on $\mathbb{C}$ are quasisymmetrically equivalent (since they are bilipschitz equivalent on any compact subset of~$\C$), so it suffices to prove that the theorem holds for the restriction of the Euclidean metric.

We will prove that the restriction of the Euclidean metric to $J_f$ satisfies the hypotheses of Proposition~\ref{prop:undistorted}.  To define the collection $\M$, let $\delta>0$ so that $f$ is injective on every subset of $J_f$ of diameter less than or equal to~$\delta$.  Let $P_f$ be the postcritical set of $f$, i.e.\ the union of the forward orbits of the critical points.  Since $f$ is hyperbolic, the Julia set $J_f$ is disjoint from $P_f$. Let $\delta'=\min(\delta,d(J_f,P_f))$.  Since the intersection of any nested collection of cells is a single point, there exists an $N\in \N$ so that all $N$-cells in $J_f$ have diameter strictly less than $\delta'/2$.  We claim that the collection $\mathcal{M}$ of all cells of level $N$ or less suffices.

For each $M$ which is either a cell of $\M$ or a union of two neighboring cells of $\M$, let $U_M$ be an open disk centered at some point of $M$ of radius~$\delta'$.  Since $M$ is either a cell or pair of neighboring cells, we know that $\diam(M)<\delta'$, so $M \subseteq U_{M}$.  Note also that $U_{M}$ is disjoint from~$P_f$.  By V\"ais\"al\"a's egg yolk principle \cite[Theorem~2.7]{Vai1}, there exists a homeomorphism $\eta_{M}\colon [0,\infty)\to [0,\infty)$ so that $g|_{M}$ is an $\eta_{M}$-quasisymmetric embedding for every conformal map $g\colon U_{M}\to \mathbb{C}$.  Let $\eta_1\colon [0,\infty)\to [0,\infty)$ be the maximum of $\eta_{M}$ as $M$ ranges over all cells and neighboring pairs of cells in $\M$.

Now let $E$ be an $n$-cell or union of two neighboring $n$-cells for some $n>N$. Then $f^{n-N}$ maps $E$ to some set $M$ by a cellular homeomorphism, where $M$ is either an $N$-cell or the union of a neighboring pair of $N$-cells.  Since $U_M$ is disjoint from $P_f$, the inverse map $M\to E$ extends to a conformal map $U_M\to \mathbb{C}$ which is a branch of the inverse of $f^{n-N}$.  In particular, the inverse map $M\to E$ is an $\eta_1$-quasisymmetry.  It follows that $f^{n-N} \colon E\to M$ is an $\eta_2$-quasisymmetry, where $\eta_2\colon [0,\infty)\to[0,\infty)$ is the homeomorphism defined by $\eta_2(0)=0$ and $\eta_2(t)=1/\eta_1^{-1}(1/t)$ for $t>0$.

Now, each cell or pair of neighboring cells in $\M$ maps to itself by the identity map which is an $\eta_0$-quasisymmetry, where $\eta_0$ is the identity homeomorphism.  We conclude that $\mathcal{M}$ and $\eta=\max(\eta_0,\eta_2)$ satisfy the hypotheses of Proposition~\ref{prop:undistorted}, so the restriction of the Euclidean metric is undistorted with respect to the induced finitely ramified cell structure.
\end{proof}

{
\renewcommand{\thetheorem}{\ref{thm:PiecewiseCanonicalQuasisymmetries}}
\begin{theorem}\textPiecewiseCanonicalQuasisymmetries
\end{theorem}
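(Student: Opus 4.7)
The plan is to reduce Theorem~\ref{thm:PiecewiseCanonicalQuasisymmetries} to Theorem~\ref{thm:PiecewiseCellular}. Let $h\colon J_f \to J_f$ be a piecewise canonical homeomorphism with finite breakpoint set $B$ of periodic or preperiodic points, and suppose that on each component $A_j$ of $J_f\setminus B$ the map $h$ satisfies $f^{m_j}\circ h = f^{n_j}$ for some non-negative integers $m_j, n_j$. The strategy is to enlarge the given branch cut $S$ to produce a refined cell structure on $J_f$ with respect to which (i) the spherical metric remains undistorted, and (ii) the homeomorphism $h$ is piecewise cellular; the conclusion is then immediate from Theorem~\ref{thm:PiecewiseCellular}.

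First I would set
\[
S' \;=\; S\cup\bigcup_{b\in B}\{f^k(b) : k\geq 0\},
\]
which is finite because each $b\in B$ has finite forward orbit, and $f$-invariant by construction. Since $S\subseteq S'$, each component of $J_f\setminus f^{-1}(S')$ is contained in a component of $J_f\setminus f^{-1}(S)$ on which $f$ is already injective; hence $S'$ is itself a finite invariant branch cut. Applying Theorem~\ref{thm:FinitelyRamifiedJulia} yields a finitely ramified cell structure on $J_f$ induced by $S'$, and Theorem~\ref{thm:JuliaSetUndistorted} then guarantees that the restriction of the spherical metric on $\Chat$ is undistorted with respect to this new structure.

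Next I would take $K=\max_j n_j$ and consider the finitely many $K$-cells $E_1,\ldots,E_\ell$ of the new structure. Because $B\subseteq S'\subseteq f^{-K}(S')$, each $E_i$ lies entirely within a single component $A_{j(i)}$ of $J_f\setminus B$, so $h|_{E_i}$ is given by a fixed branch of $f^{-m_{j(i)}}\circ f^{n_{j(i)}}$. The $f$-invariance of $S'$ implies that $f$ sends each $s$-cell homeomorphically onto an $(s-1)$-cell for $s\geq 1$, and symmetrically each branch of $f^{-1}$ sends a $t$-cell onto a $(t+1)$-cell; iterating these two facts, $h$ maps $E_i$ cellularly onto a $(K-n_{j(i)}+m_{j(i)})$-cell $E_i'$. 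Thus $h$ is piecewise cellular with respect to the decompositions $\{E_i\}$ and $\{E_i'\}$, and Theorem~\ref{thm:PiecewiseCellular} completes the proof.

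The main obstacle I anticipate is the bookkeeping required to verify that $f$ and its inverse branches interact correctly with the refined cell structure: concretely, I would need to unpack Theorem~\ref{thm:FinitelyRamifiedJulia} to confirm that $f$-invariance of $S'$ gives $f^{-(n-1)}(S')\subseteq f^{-n}(S')$, and to use the branch-cut property to show that $f$ restricts to a homeomorphism from each component of $J_f\setminus f^{-n}(S')$ onto a component of $J_f\setminus f^{-(n-1)}(S')$. A minor subtlety is that the cellular shift $m_{j(i)}-n_{j(i)}$ may vary between the pieces $E_i$, but this variation is exactly what the definition of piecewise cellular permits.
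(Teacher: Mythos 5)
Your proposal is correct and follows essentially the same route as the paper's proof: absorb the forward orbits of the breakpoints into the invariant branch cut, observe that $h$ is then piecewise cellular on cells of sufficiently deep level, and invoke Theorems~\ref{thm:JuliaSetUndistorted} and~\ref{thm:PiecewiseCellular}. The only caution is that $f$ need not be injective on a \emph{closed} $1$-cell (only on its interior), so the claim that $f$ sends each $s$-cell homeomorphically onto an $(s-1)$-cell should be stated for interiors of cells and then closed up afterwards --- exactly the bookkeeping you anticipate in your final paragraph, and exactly how the paper handles it.
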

\addtocounter{theorem}{-1}
}
\begin{proof}
Let $S$ be a finite invariant branch cut for $J_f$, and let $B$ be the set of breakpoints of $h$.  Since points of $B$ are periodic or pre-periodic, the union $B'=\bigcup_{n\geq 1} f^n(B)$ of the forward orbits of the points in $B$ is finite.  Replacing $S$ by $S\cup B'$ if necessary, we may assume that $B\subseteq f^{-1}(S)$. 
 We will prove that $h$ is piecewise cellular with respect to this cell structure.

Let $E_1,\ldots,E_k$ be the $1$-cells of $J_f$, i.e.\ the closures of the connected components of $J_f\setminus f^{-1}(S)$.  Since $B\subseteq f^{-1}(S)$, the map $h$ restricts to a canonical homeomorphism on the interior $\mathrm{int}(E_i)$ of each~$E_i$.  In particular, there exist $m_i,n_i\geq 0$ so that $f^{m_i}\circ h$ agrees with $f^{n_i}$ on~$\mathrm{int}(E_i)$.  Let $M=1+\max(n_1,\ldots,n_k)$.  We claim that $h$ restricts to a cellular map on each $M$-cell of~$J_f$.

Let $E$ be an $M$-cell of $J_f$.  Then $E\subseteq E_i$ for some~$i$, so $f^{m_i}\circ h$ agrees with $f^{n_i}$ on~$E$.  Since $S$ is a branch cut, $f$ is injective on the interior of each $1$-cell, and indeed on the interior of each $k$-cell for $k\geq 1$.  Since $n_i\leq M-1$, it follows that $f^{n_i}$ maps $\mathrm{int}(E)$ homeomorphically to the interior of an $(M-n_i)$-cell~$E_*$. Then $f^{m_i}\circ h$ maps $\mathrm{int}(E)$ homeomorphically to $\mathrm{int}(E_*)$, so $f^{m_i}$ maps the interior of $h(E)$ homeomorphically to~$\mathrm{int}(E_*)$.  But each connected component of $f^{-m_i}(\mathrm{int}(E_*))$ is the interior of an $(M+m_i-n_i)$-cell that maps homeomorphically to~$\mathrm{int}(E_*)$ under~$f^{m_i}$.  Then the interior of $h(E)$ must be the interior of one of these cells, and since $h(E)$ is the closure of its interior it follows that $h(E)$ is an $(M+m_i-n_i)$-cell.

Now, if $E'\subseteq E$ is an $(M+N)$-cell for some $N\geq 0$, then the same argument shows that $h(E')$ is some $(M+N+m_i-n_i)$-cell contained in $h(E)$.  It follows that $h$ is cellular on~$E$, and therefore $h$ is piecewise cellular.  Since the metric on $J_f$ is undistorted by Theorem~\ref{thm:JuliaSetUndistorted}, it follows from Theorem~\ref{thm:PiecewiseCellular} that $h$ is a quasisymmetry. \end{proof}

\begin{remark}
Given a rational map $f$ as in Theorem \ref{thm:PiecewiseCanonicalQuasisymmetries}, let $C$ be any collection of periodic and pre-periodic points so that $f(C) = C$, that is suppose $C$ is a union of grand orbits of $f$.  Then, the piecewise canonical homeomorphisms of $J_f$ with breakpoints contained in $C$ forms a group.

For example, consider the basilica Julia set defined in Example~\ref{ex:Basilica}.  Let $p$ be the fixed point $(1-\sqrt{5})/2$, and let $C=\bigcup_{n\geq 0} f^{-n}(p)$ be the set of all ``pinch points'' in the Julia set.  In~\cite{BeFo1}, the authors investigated \newword{basilica Thompson group} $T_B$ of all piecewise canonical homeomorphisms of $J_f$ whose allowed breakpoints lie in~$C$. This group is finitely generated, and Lyubich and Merenkov showed that it is dense in the planar quasisymmetry group of $J_f$ in a certain sense~\cite{LyMe}.

More generally, given any rational map $f$ as in Theorem \ref{thm:PiecewiseCanonicalQuasisymmetries} and any finite invariant branch cut $S$, let $C=\bigcup_{n\geq 0} f^{-n}(S)$.  Then the group of all piecewise canonical homeomorphisms of $J_f$ with breakpoints in $C$ is isomorphic to a rearrangement group as defined in~\cite{BeFo2}.  This rearrangement group uses a colored replacement system, whose replacement rules are determined by how each 1-cell subdivides into 2-cells.  Note that if any of the cells of $J_f$ have more than two boundary points then it is necessary to use hypergraphs instead of graphs.
\end{remark}

\subsection{A rational example}
\label{subsec:bubblebath}

\begin{figure}
\centering
$\underset{\textstyle\text{(a)\rule{0pt}{12pt}}}{\includegraphics{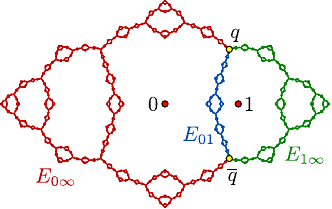}}\qquad\underset{\textstyle\text{(b)\rule{0pt}{12pt}}}{\includegraphics{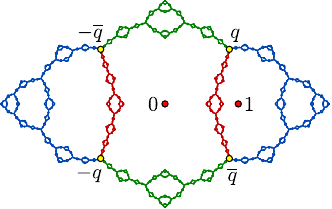}}$
\caption{(a) The Julia set $J_f$ for $f(z)=1-z^{-2}$.  The fixed points $q$ and $\overline{q}$ form an invariant branch cut. (b) The 1-cells of the corresponding finitely ramified cell structure.}
\label{fig:BubbleBath}
\end{figure}
In this section we consider the ``bubble bath'' Julia set for the rational map $f(z)=1-z^{-2}$, which is shown in Figure~\ref{fig:BubbleBath} and discussed in Example \ref{ex:BubbleBath}.   Our main theorem is the following.  

{
\renewcommand{\thetheorem}{\ref{thm:BubbleBathModularGroup}}
\begin{theorem}\textBubbleBathModularGroup
\end{theorem}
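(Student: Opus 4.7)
The plan is to verify the existence of a finite invariant branch cut and then exhibit a copy of $\Z_2\ast\Z_3$ inside $QS(J_f)$ using piecewise canonical homeomorphisms.  First I would apply Proposition~\ref{prop:MakingBranchCuts} to prove that $S=\{q,\overline{q}\}$ is a branch cut: since $f$ has real coefficients, $\overline{q}$ is a fixed point of $f$ whenever $q$ is, so $f(S)=S$; the critical values of $f$ in $\Chat$ are $0$, $1$, and $\infty$, and these form a super-attracting $3$-cycle $0\mapsto\infty\mapsto 1\mapsto 0$ living in a cycle of Fatou components $F_0,F_\infty,F_1$.  I would then explicitly build a finite tree $\Gamma\subset\Chat$ consisting of arcs through these three Fatou components joining the critical values $0$, $1$, and $\infty$ to $q$ and $\overline{q}$, arranged so that $\Gamma\cap J_f\subseteq S$; this requires verifying that each of the three periodic Fatou components has at least one of $q,\overline{q}$ on its boundary, which can be read off from the local dynamics at these repelling fixed points together with the fact that $f$ permutes the periodic Fatou components cyclically.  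Once this is established, Theorem~\ref{thm:FinitelyRamifiedJulia} produces the induced finitely ramified cell structure pictured in Figure~\ref{fig:BubbleBath}(b), Theorem~\ref{thm:JuliaSetUndistorted} gives that the spherical metric is undistorted, and Theorem~\ref{thm:PiecewiseCanonicalQuasisymmetries} ensures that every piecewise canonical homeomorphism of $J_f$ with periodic or preperiodic breakpoints is a quasisymmetry.

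With the undistorted metric in hand, I would next produce two explicit generators.  For the order-two element $\sigma$ I would take complex conjugation $z\mapsto\overline{z}$, which preserves $J_f$ because $f$ has real coefficients and is trivially a quasisymmetry because it is a Euclidean isometry.  For the order-three element $\rho$, I would exploit the cyclic action $F_0\to F_\infty\to F_1\to F_0$ of $f$: I would partition $J_f$ into three closed pieces $K_0,K_1,K_2$ whose pairwise intersections lie in $S$, with $K_i$ containing the part of $J_f$ around the $i$th Fatou component in the cycle, and then define $\rho$ piecewise on each $K_i$ as an appropriate branch of the form $f^{-b_i}\circ f^{a_i}$ sending $K_i$ cellularly onto $K_{(i+1)\bmod 3}$ and fixing the points of $S$.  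By construction $\rho^3=\mathrm{id}$, the breakpoints of $\rho$ lie in $S$ and are therefore periodic, and Theorem~\ref{thm:PiecewiseCanonicalQuasisymmetries} then guarantees that $\rho$ is a quasisymmetry.

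Finally I would verify that $\langle\sigma,\rho\rangle\cong\Z_2\ast\Z_3$ using the Ping-Pong Lemma for Free Products, which amounts to exhibiting two disjoint subsets of $J_f$ that $\sigma$ interchanges together with three disjoint subsets that $\rho^{\pm 1}$ permute cyclically, arranged so that every alternating word in $\sigma$ and the nontrivial powers of $\rho$ visibly moves a basepoint.  The main obstacle, and the step I expect to require the most care, is the construction of $\rho$ together with these ping-pong sets: one must show that the triple partition $K_0\cup K_1\cup K_2$ really exists at the level of the $1$-cells of Example~\ref{ex:BubbleBath}, verify that the piecewise formula yields a well-defined continuous map of exact order three with no ambiguity at the breakpoints, and check that the partition can be refined compatibly with $\sigma$ so that the ping-pong hypotheses hold.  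Once this is in place the rest of the argument is routine and produces the desired embedding $\Z_2\ast\Z_3\hookrightarrow QS(J_f)$, proving the theorem.
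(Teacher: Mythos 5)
Your overall architecture (branch cut via Proposition~\ref{prop:MakingBranchCuts}, then Theorems~\ref{thm:FinitelyRamifiedJulia}, \ref{thm:JuliaSetUndistorted}, and~\ref{thm:PiecewiseCanonicalQuasisymmetries}, then ping-pong) matches the paper, and your order-three element $\rho$ is essentially the paper's generator $k$. But your choice of order-two generator is fatally flawed: complex conjugation $\sigma(z)=\overline{z}$ commutes with your $\rho$, so $\langle\sigma,\rho\rangle$ is cyclic of order $6$ and cannot contain $\Z_2*\Z_3$. To see this, note that $J_f\setminus S$ has exactly three components, so your pieces $K_0,K_1,K_2$ must be the closures $\overline{E_{01}},\overline{E_{0\infty}},\overline{E_{1\infty}}$; each of these is invariant under conjugation (the Fatou components $U_0,U_1,U_\infty$ are centered at real points), and $\sigma$ swaps $q$ and $\overline{q}$. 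Since $f$ has real coefficients, $\sigma$ commutes with $f$, so on $K_i$ one computes $f^{b_i}\circ(\sigma\rho\sigma)=\sigma\circ f^{b_i}\circ\rho\circ\sigma=\sigma\circ f^{a_i}\circ\sigma=f^{a_i}=f^{b_i}\circ\rho$. Thus $\rho$ and $\sigma\rho\sigma$ are both continuous branches of $f^{-b_i}\circ f^{a_i}$ on the connected set $K_i$, and they agree at $q$ (since $\rho$ fixes $S$ pointwise and $\sigma$ permutes $S$); as $f$ has no critical points on $J_f$, the agreement set is open and closed, so $\sigma\rho\sigma=\rho$ on each piece. No choice of ping-pong sets can rescue an abelian group.

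The paper avoids this by taking the order-two generator to be $h(z)=-z$, the nontrivial deck transformation of $f$ (note $f(-z)=f(z)$). Unlike conjugation, $h$ does \emph{not} preserve the three pieces: it sends $q,\overline{q}$ to $-q,-\overline{q}\in E_{0\infty}$, so $h(E_{01})$ and $h(E_{1\infty})$ are $1$-cells properly contained in $E_{0\infty}$. This is exactly what makes the ping-pong hypotheses checkable, with $X_K=E_{1\infty}\cup E_{01}$ and $X_H=E_{0\infty}$. Two smaller remarks: the critical values of $f$ are $f(0)=\infty$ and $f(\infty)=1$ (the point $0$ is a critical point, not a critical value), though including an extra arc through $U_0$ in $\Gamma$ is harmless; and the fact that $q,\overline{q}$ lie on all three boundaries $\partial U_0,\partial U_1,\partial U_\infty$, together with the count of components of $J_f\setminus S$, requires a genuine argument (the paper counts fixed points of $f^3$ on the Jordan curves $\partial U_i$ and uses a filling argument), which your sketch leaves essentially unaddressed.
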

\addtocounter{theorem}{-1}
}

 The existence of a copy of $\Z_2*\Z_3$ in the group of homeomorphisms of $J_f$ was first observed by Jasper Weinburd~\cite{Wei}.

The following proposition lists some of the basic properties of $f$ and~$J_f$.

\begin{lemma}\label{lem:PropertiesBubbleBath}
Let $f(z)=1-z^{-2}$. Then:
\begin{enumerate}
    \item The critical points for $f$ are $0$ and $\infty$, both of which lie in a $3$-cycle with the point\/~$1$. In particular, $f$ is hyperbolic.\smallskip
    \item The Julia set $J_f$ is connected.\smallskip
    \item The points $0$, $1$, and $\infty$ lie in distinct Fatou components $U_0$, $U_1$, and $U_\infty$, each of which is a Jordan domain.\smallskip
    \item The map $f$ has one real fixed point $p\approx -0.7549$ and two complex conjugate fixed points
    \[
    q\approx 0.8774+0.7449i\qquad\text{and} \qquad \overline{q} \approx 0.8774 - 0.7449i.
    \]
    Both $q$ and $\overline{q}$ lie on $\partial U_0\cap \partial U_1 \cap \partial U_\infty$, and $J_f\setminus\{q,\overline{q}\}$ has exactly three components $E_{01},E_{0\infty},E_{1\infty}$ (see Figure~\ref{fig:BubbleBath}a), where each $E_{ij}$ intersects both $\partial U_i$ and $\partial U_j$ but not the remaining $\partial U_{k}$.
\end{enumerate}
\end{lemma}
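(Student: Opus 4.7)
The plan is to verify each of the four assertions in order: (1) by a direct calculation, (2) and (3) together using Böttcher's theorem applied to $g = f^3$, and (4) by solving a cubic and then extracting the topological intersection data with the aid of Proposition~\ref{prop:MakingBranchCuts}.

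For (1), I would compute $f'(z) = 2z^{-3}$, which vanishes at $\infty$, and change coordinates via $w = 1/z$ (under which $f$ becomes $1 - w^2$, exhibiting $0$ as the second critical point in the original chart); this accounts for all critical points since a degree-$2$ rational map has exactly two by Riemann--Hurwitz. Direct evaluation gives $f(0)=\infty$, $f(\infty)=1$, $f(1)=0$, so both critical orbits lie in the $3$-cycle $\{0,1,\infty\}$, and $f$ is hyperbolic by the forward-orbit criterion of~\cite[Theorem~19.1]{Milnor}. For (2) and (3), I would pass to $g = f^3$, for which $0$, $1$, and $\infty$ are superattracting fixed points of local degree $4$ (the product of the local degrees $2, 2, 1$ of $f$ along the cycle). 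Böttcher's theorem then makes the immediate basins $U_0, U_1, U_\infty$ simply connected, with $g|_{U_i}$ conjugate to $z \mapsto z^4$ on the open unit disk. Every other Fatou component is pre-periodic by Sullivan's nonwandering theorem, and since the critical set of $f$ lies in $U_0 \cup U_\infty$, Riemann--Hurwitz shows that each such pre-periodic component is also a topological disk; connectedness of $J_f$ then follows from the standard fact that a rational Julia set is connected exactly when every Fatou component is simply connected. Hyperbolicity together with connectedness of $J_f$ gives local connectedness of each $\partial U_i$ by \cite[Theorems~19.2, 19.3]{Milnor}, so by Carathéodory the Böttcher coordinate on each $U_i$ extends continuously to $\overline{\D}$; I would establish injectivity on $\partial \D$ (so that $\overline{U_i}$ is a Jordan domain) by arguing that any non-trivial boundary identification would propagate under angle-quadrupling to infinitely many identifications, contradicting the finite critical portrait of this post-critically finite map.

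For (4), the fixed-point equation $1 - z^{-2} = z$ rearranges to the cubic $z^3 - z^2 + 1 = 0$, whose discriminant is negative; hence there is one real root $p$ and a conjugate pair $q, \overline{q}$, with the stated numerical approximations. Using $q^3 = q^2 - 1$ one computes $|f'(q)| = 2/|q^2 - 1| > 1$, so $q$ (and hence $\overline{q}$, via the real symmetry $f(\overline{z}) = \overline{f(z)}$) is repelling and lies in $J_f$. Since $f$ cyclically permutes the three Jordan curves $\partial U_0 \to \partial U_\infty \to \partial U_1 \to \partial U_0$, a fixed point of $f$ on any one of these boundaries automatically lies on all three, so it suffices to locate $q$ on $\partial U_0$; I would do this by identifying $q$ with the landing point of a Böttcher ray for $g|_{U_0}$ whose angle is fixed under multiplication by $4$ modulo $1$, namely one of $\{1/3, 2/3\}$. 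For the three-component decomposition, I would construct a graph $\Gamma$ consisting of arcs through $U_0, U_1, U_\infty$ joining the critical values $1, \infty$ to $q$ and $\overline{q}$, with $\Gamma \cap J_f = \{q, \overline{q}\}$; Proposition~\ref{prop:MakingBranchCuts} then certifies $\{q, \overline{q}\}$ as a finite invariant branch cut, and the three claimed sets $E_{ij}$ are the closures of the components of $J_f \setminus \{q, \overline{q}\}$, labelled by which pairs of $\partial U_i$ they meet.

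The main obstacle is the triple-intersection assertion $q, \overline{q} \in \partial U_0 \cap \partial U_1 \cap \partial U_\infty$, which does not follow from the cubic equation alone and requires understanding how the three superattracting petals accumulate at the repelling fixed points. The cleanest route is to determine the Böttcher angles of $q$ and $\overline{q}$ in $U_0$ under $g$ explicitly (the nonzero fixed points of multiplication by $4$ modulo $1$), after which the cyclic $f$-action transports the boundary information to $U_1$ and $U_\infty$; the same computation supplies the arcs needed to assemble the branch-cut graph $\Gamma$ used in the three-component decomposition.
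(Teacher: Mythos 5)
Your outline for parts (1) and (2) and for locating $q,\overline{q}$ on the three boundaries is essentially sound and close in spirit to the paper's argument: the paper also reduces the triple intersection to counting fixed points of $f^3$ on each Jordan curve $\partial U_i$ (a degree-four circle map has at least three fixed points, and conjugation-invariance of each curve forces at least one, hence both, of the non-real fixed points $q,\overline{q}$ onto it), which is the same counting your fixed-internal-ray argument encodes. Your observation that a fixed point on one $\partial U_i$ automatically lies on all three because $f$ permutes the curves is correct and clean. For (2) the paper simply cites Milnor's result that a quadratic rational map has connected Julia set whenever no critical orbit converges to a fixed point; your route through simple connectivity of all Fatou components can be made to work but note that Böttcher's theorem alone does not give simple connectivity of an immediate basin --- that also requires a Riemann--Hurwitz count on the periodic cycle, not just on the preperiodic components.

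There are, however, two genuine gaps. First, the Jordan-domain claim in (3) is not established by your sketch: local connectivity plus Carathéodory gives only a continuous surjection $\partial\D\to\partial U_i$, and the assertion that a nontrivial boundary identification ``would propagate to infinitely many identifications, contradicting the finite critical portrait'' is not a proof --- infinitely many identifications are perfectly compatible with postcritical finiteness (the Carath\'eodory map for the basin of infinity of $z^2-1$ identifies infinitely many pairs of angles). The paper instead invokes Pilgrim's theorem that every Fatou component of a postcritically finite quadratic rational map not conjugate to a polynomial is a Jordan domain; some result of this strength is needed, and your later steps (distinct fixed rays landing at distinct points, the arcs $A_w$ meeting $J_f$ only at their endpoints) silently depend on it. Second, Proposition~\ref{prop:MakingBranchCuts} certifies that $\{q,\overline{q}\}$ is a branch cut, but it says nothing about the number of components of $J_f\setminus\{q,\overline{q}\}$; Theorem~\ref{thm:FinitelyManyComponents} gives only finiteness. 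Proving that there are \emph{exactly} three components, each meeting exactly two of the boundaries $\partial U_i$, is where most of the work in the paper's proof lies: one must first show (via Lemma~\ref{lem:ComplementaryComponents}) that $U_0,U_1,U_\infty$ are the only Fatou components with $q$ or $\overline{q}$ on their boundary, identify the filling of $J_f\setminus\{q,\overline{q}\}$ with $\Chat\setminus(U_0\cup U_1\cup U_\infty\cup\{q,\overline{q}\})$ using Lemma~\ref{lem:ComponentsFilling}, and then run a plane-separation argument with arcs through the three components to show the filling has exactly three pieces. Your proposal asserts the three-component decomposition as a labelling convention rather than deriving it, so this step is missing.
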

\begin{proof}
Statement (1) is easy to verify.  For statement~(2), Milnor proves in \cite[Lemma~8.2]{MilnorQuadraticRational} that the Julia set for a quadratic rational map is connected as long as the orbits of the critical points do not converge to fixed points.

For statement~(3), observe that $0$, $\infty$, and $1$ are superattracting fixed points for~$f^3$.  By the Sullivan classification of Fatou components~\cite[Theorem~16.1]{Milnor}, it follows that $U_0$, $U_\infty$, and $U_1$ are the immediate basins for $0$, $\infty$, and $1$ with respect to~$f^3$, and therefore these sets are all distinct.  Furthermore, since $f$ is postcritically finite, has exactly two critical points, and is not conjugate to a polynomial, it follows from a result of Pilgrim~\cite{Pil} that every Fatou component for $f$ is a Jordan domain.

For statement~(4), the fixed points of $f$ are the roots of the polynomial $z^3-z^2+1$, and it is easy to check that one is real and two are complex conjugates.  To prove that $q,\overline{q}\in\partial U_0\cap \partial U_1\cap\partial U_\infty$, observe that $f^3$ maps each of the Jordan curves $\partial U_0$, $\partial U_1$, and $\partial U_\infty$ to itself with degree four.  Since any degree four map from a circle to itself has at least three fixed points, it follows that $f^3$ must have at least three fixed points on each of $\partial U_0$, $\partial U_1$, and $\partial U_\infty$.  It is easy to check that the fixed points for~$f^3$ are precisely the points $\{0,1,\infty,p,q,\overline{q},r_1,r_2,r_3\}$, where
\[
r_1\approx -2.2470,\qquad r_2\approx 0.8019,\qquad r_3\approx -0.5550
\]
are the three real roots of the polynomial $z^3+2z^2-z-1$ (which form a $3$-cycle under~$f$). Of these, only $\{p,q,\overline{q},r_1,r_2,r_3\}$ lie on the Julia set, and all of these are real except $q$ and $\overline{q}$. 
 Since $U_0$, $U_1$, and $U_\infty$ map to themselves under complex conjugation, each of the curves $\partial U_0,\partial U_1,\partial U_\infty$ contains at most two real points, and therefore all of these curves must contain both $q$ and~$\overline{q}$.

All that remains is to prove that $J_f\setminus\{q,\overline{q}\}$ has exactly three components.  For this we will use the notion of filling of Julia sets developed in Appendix~\ref{ap:RemovingFiniteSets}.  Note first that by \cite[Theorem~8.6]{Milnor}, the cycle $\{0,\infty,1\}$ is the only attracting cycle for~$f$.  Since $f$ is hyperbolic, it follows from \cite[Theorem~19.1]{Milnor} that the orbit of every point in the Fatou set must converge to this cycle, so $U_0$, $U_1$, and $U_\infty$ must be the only periodic Fatou components.  By Lemma~\ref{lem:ComplementaryComponents}, every Fatou component that has $q$ or $\overline{q}$ on its boundary must be periodic, so $U_0$, $U_1$, and $U_\infty$ are the only Fatou components whose boundaries contain $q$ or $\overline{q}$.  It follows that the filling $E$ of $J_f\setminus \{q,\overline{q}\}$ is precisely the complement of $U_0\cup U_1\cup U_\infty\cup \{q,\overline{q}\}$ in~$\Chat$.  By Lemma~\ref{lem:ComponentsFilling}, the components of $J_f\setminus \{q,\overline{q}\}$ are in one-to-one correspondence with the components of~$E$, so it suffices to prove that $E$ has exactly three connected components.

To see this, choose arcs $A_0$, $A_1$, and $A_\infty$ connecting $q$ and $\overline{q}$ whose interiors are contained in $U_0$, $U_1$, and $U_\infty$, respectively.  Then $A_0\cup A_1\cup A_\infty$ separates the plane into three regions $C_{01},C_{0\infty},C_{1\infty}$, where each $C_{ij}$ is bounded by $A_i\cup A_j$, and each component of $E$ must be contained in one of these three regions. For each $C_{ij}$, let $B_{ij}$ and $B_{ji}$ denote the open arcs of $\partial U_i\setminus\{q,\overline{q}\}$ and $\partial U_j\setminus\{q,\overline{q}\}$, respectively, that are contained in $C_{ij}$. Then $\bigcup_{i\ne j} B_{ij}$ is the boundary of $U_0\cup U_1\cup U_\infty$ in $\Chat\setminus \{q,\overline{q}\}$, which is the same as the boundary of $E$ in $\Chat\setminus \{q,\overline{q}\}$. Since $E$ is closed in $\Chat\setminus\{q,\overline{q}\}$, its connected components are also closed in $\Chat\setminus\{q,\overline{q}\}$,  so each connected component of $E$ must intersect and hence contain one of the sets $B_{ij}$. But no $B_{ij}$ separates $\Chat\setminus\{q,\overline{q}\}$ by itself, so any connected component of $E$ in $C_{ij}$ must contain $B_{ij}\cup B_{ji}$, and therefore $E$ has exactly three connected components $E_{01},E_{0\infty},E_{1\infty}$ with $E_{ij}\subseteq C_{ij}$.\end{proof}

\begin{proof}[Proof of Theorem~\ref{thm:BubbleBathModularGroup}]
Let $U_0$, $U_1$, $U_\infty$, $q$, $E_{01}$, $E_{0\infty}$, and $E_{1\infty}$ be as in Lemma~\ref{lem:PropertiesBubbleBath}.  For each $w\in\{0,1,\infty\}$, choose an arc $A_w$ connecting $q$ and $\overline{q}$ whose interior lies in $U_w$ and which goes through~$w$.  Then $\Gamma=A_0\cup A_1\cup A_\infty$ is a finite graph in~$\Chat$ that contains the critical values $1$ and $\infty$ and intersects $J_f$ only at $\{q,\overline{q}\}$, so by Proposition~\ref{prop:MakingBranchCuts} the set $S=\{q,\overline{q}\}$ is a finite invariant branch cut for~$J_f$.  

To prove that the quasisymmetry group for $J_f$ contains $\Z_2*\Z_3$, we use the ping-pong lemma for free groups stated in Appendix~\ref{ap:PingPongLemmas}. To construct the desired homeomorphisms $h$ and $k$, observe first that $f^{-1}(S)=\{q,\overline{q},-q,-\overline{q}\}$.  Since $f^{-1}(\partial U_\infty)=\partial U_0$ and $f^{-1}(\partial U_1)=\partial U_\infty$, both $-q$ and $-\overline{q}$ are contained in $\partial U_0\cap \partial U_\infty$ and hence in $E_{0\infty}$.  Then $E_{01}$ and $E_{1\infty}$ must be $1$-cells in~$J_f$, so $f$ maps $E_{1\infty}$ homeomorphically to $E_{01}$ and $E_{01}$ homeomorphically to $E_{0\infty}$, as indicated in Figure~\ref{fig:BubbleBath}(b).

\begin{figure}
\centering
$\raisebox{-0.47\height}{\includegraphics{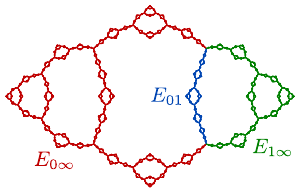}}\quad\longrightarrow\quad\raisebox{-0.47\height}{\includegraphics{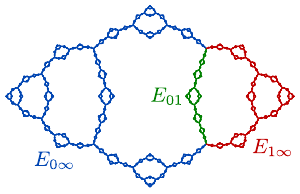}}$
\caption{An order $3$ quasisymmetry $k$ of the bubble bath Julia set. Here $E_{01}\cup E_{1\infty}$ maps to $E_{0\infty}\cup E_{01}$ via $f$ and $E_{0\infty}$ maps to $E_{1\infty}$ via a branch of~$f^{-2}$.}
\label{fig:BubbleBathQuasisymmetry}
\end{figure}
Let $h\colon J_f\to J_f$ be the isometry $h(z)=-z$, and let $k\colon J_f\to J_f$ be the homeomorphism that maps $E_{1\infty}\cup E_{01}$ to $E_{01}\cup E_{0\infty}$ via $f$ and maps $E_{0\infty}$ to $E_{1\infty}$ by the inverse of the homeomorphism $f^2\colon E_{1\infty}\to E_{0\infty}$, as shown in Figure~\ref{fig:BubbleBathQuasisymmetry}. Note that $k$ is piecewise canonical with breakpoints at $q$ and $\overline{q}$, so by Theorem~\ref{thm:PiecewiseCanonicalQuasisymmetries} it is a quasisymmetry.  Note also that $h$ has order~$2$ and $k$ has order~$3$.  We will use the ping-pong lemma to prove that $\langle h,k\rangle\cong \Z_2*\Z_3$.

Let $X_K=E_{1\infty}\cup E_{01}$, and let $X_H=E_{0\infty}$.  Then $k(X_H)=E_{1\infty}$ and $k^2(X_H)=E_{01}$, both of which are proper subsets of $X_K$.  Furthermore, $h(E_{1\infty})$ and $h(E_{01})$ are connected sets with boundary $\{-q,-\overline{q}\}$ and are therefore 1-cells contained in~$E_{0\infty}$.  But $E_{0\infty}$ is the union of the four remaining 1-cells (as shown in Figure~\ref{fig:BubbleBath}b), and therefore $h(X_K)$ is a proper subset of~$X_H$.  By the ping-pong lemma, we conclude that $\langle h,k\rangle\cong \Z_2*\Z_3$.
\end{proof}

\begin{remark}
The quasisymmetries we produce in the proof of Theorem~\ref{thm:BubbleBathModularGroup} are actually \newword{spherical}, i.e.\ they extend to orientation-preserving homeomorphisms of the Riemann sphere.
\end{remark}

\begin{remark}
The group of quasisymmetries of this Julia set is actually much larger than this $\Z_2*\Z_3$.  For example, it is not hard to show that $J_f$ has uncountably many cellular homeomorphisms, all of which are quasisymmetries.  Weinburd shows that the group of piecewise canonical spherical homeomorphisms whose breakpoints lie in the grand orbits of $q$ and $\overline{q}$ is finitely generated, virtually simple, and contains copies of Thompson's group~$T$~\cite{Wei}.
\end{remark}

\section{Polynomial Julia sets with infinitely many quasisymmetries}
\label{sec:JuliaSetsInfinitelyManyQuasi}

The goal of this section is to prove Theorem~\ref{thm:BigTheorem} from the introduction, which asserts that the Julia sets for two large families of hyperbolic polynomials have infinitely many quasisymmetries.

Our proofs will make use of some known results on the geometry of  Julia sets for a postcritically finite polynomial~$f$.  First, recall that the \newword{filled Julia set} $K_f$ is the complement of the Fatou component that contains~$\infty$, i.e.\ the union of the Julia set $J_f$ with all of the bounded Fatou components for~$f$. Since $f$ is postcritically finite, each Fatou component $U\subseteq K_f$ contains a unique point, called the \newword{center} of $U$, whose forward orbit intersects the postcritical set.  The rays in $U$ emanating from this center point are known as \newword{internal rays} (see the discussion of \hyperref[ref:ExternalRays]{external rays} in Section~\ref{subsec:FindingBranchCuts}).  An arc $A\subseteq K_f$ is called a \newword{regulated arc} if the intersection of $A$ with the closure of each bounded Fatou component is contained in the union of at most two internal rays.
\begin{figure}[tb]
\centering
$\underset{\textstyle\rule{0pt}{14pt}\text{(a)}}{\includegraphics{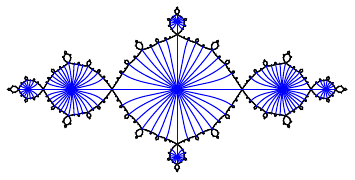}}$\hfill
$\underset{\textstyle\rule{0pt}{14pt}\text{(b)}}{\includegraphics{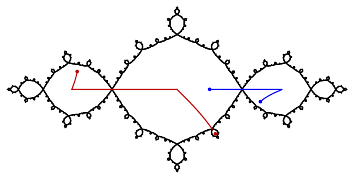}}$
\caption{(a) Some internal rays for the basilica Julia set. (b)~Two regulated arcs in the filled basilica.}
\label{fig:AllowedArcs}
\end{figure}%
For example, Figure~\ref{fig:AllowedArcs}(a) shows some internal rays for the basilica Julia set, and  Figure~\ref{fig:AllowedArcs}(b) shows two regulated arcs for the basilica Julia set.

Douady and Hubbard proved that every pair of points in $K_f$ are connected by a unique regulated arc~\cite{OrsayNotes}. They also defined a \newword{projection function} $\pi_U\colon K_f\to \overline{U}$.  Specifically, if $p\in U$ then $\pi_U(p) = p$, and if $p\in K_f\setminus U$ then $\pi_U(p)$ is the unique point at which the regulated arc from $p$ to $c$ intersects  $\partial U$.   They proved that this function is continuous and is constant on each connected component of $K_f\setminus\overline{U}$ \cite[Proposition~2.5]{OrsayNotes}.  In particular, if $p\in \partial U$, then $\pi_U^{-1}(p)$ is precisely the union of $\{p\}$ with all of the connected components of $K_f\setminus \{p\}$, except for the one that contains~$U$.

\subsection{Unicritical polynomials}\label{subsec:Unicritical}

The goal of this section is to prove the following theorem, which is part (1) of Theorem~\ref{thm:BigTheorem} from the introduction.

\begin{theorem}\label{thm:UnicriticalTheorem}
Let $f$ be a unicritical polynomial of degree $d\geq 2$ whose critical point is periodic.  Then the quasisymmetry group of $J_f$ contains  $\Z_d*\Z_n$ for some~$n\geq 2$.
\end{theorem}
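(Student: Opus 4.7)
The plan is to construct two self-quasisymmetries of $J_f$ of orders $d$ and $n$, realized as piecewise canonical homeomorphisms with periodic breakpoints, and then apply the Ping-Pong Lemma for Free Products from Appendix~\ref{ap:PingPongLemmas} to obtain the copy of $\Z_d*\Z_n$. After an affine conjugation we may assume $f(z)=z^d+c$, so the unique critical point is $0$, of some period $k\geq 1$; let $U_0$ be its Fatou component. When $k=1$ we have $c=0$ and $J_f=S^1$, so $QS(J_f)$ contains the M\"{o}bius group $\mathrm{PSL}(2,\R)$ of the circle, which itself contains $\Z_d*\Z_n$ for any $n\geq 2$ via a standard Fuchsian ping-pong between two finite-order elliptic subgroups centered at distinct points of the disk. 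I therefore assume $k\geq 2$ henceforth.

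For the order-$d$ generator, I exploit the fact that $f^k$ is a degree-$d$ superattracting self-map of $\overline{U_0}$ fixing~$0$. Since $f$ is hyperbolic and postcritically finite, $\partial U_0$ is a Jordan curve and the B\"{o}ttcher isomorphism extends to a homeomorphism $\varphi\colon\overline{U_0}\to\overline{\D}$ conjugating $f^k|_{\overline{U_0}}$ to $w\mapsto w^d$. Pulling back multiplication by $e^{2\pi i/d}$ defines an order-$d$ rotation $\sigma\colon\overline{U_0}\to\overline{U_0}$ commuting with $f^k$. To extend $\sigma$ from $\partial U_0$ to all of $J_f$, I use the Douady--Hubbard projection $\pi_{U_0}\colon K_f\to\overline{U_0}$: for each $p\in\partial U_0$, the \emph{limb} $L_p := \pi_{U_0}^{-1}(p)\cap J_f$ is well-defined, and because $f^k$ is a branched self-cover of $\overline{U_0}$ of degree~$d$, the $d$ limbs $L_p, L_{\sigma(p)},\ldots,L_{\sigma^{d-1}(p)}$ are each mapped homeomorphically onto the single limb $L_{f^k(p)}$ by~$f^k$. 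Composing $f^k|_{L_{\sigma^i(p)}}$ with the inverse branch $(f^k|_{L_{\sigma^{i+1}(p)}})^{-1}$ gives a canonical homeomorphism $L_{\sigma^i(p)}\to L_{\sigma^{i+1}(p)}$ in the sense of Section~\ref{subsec:PiecewiseCanonical}, and gluing these canonical pieces with $\sigma$ on $\partial U_0$ produces an order-$d$ piecewise canonical self-homeomorphism $h$ of $J_f$. All of its breakpoints lie on $\partial U_0$ and are periodic, so by Theorem~\ref{thm:PiecewiseCanonicalQuasisymmetries}, $h$ is a quasisymmetry.

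For the order-$n$ generator, the combinatorial theory of postcritically finite polynomials (Douady--Hubbard) guarantees, when $k\geq 2$, a repelling fixed point $\alpha\in J_f$ which is the landing point of $n\geq 2$ external rays cyclically permuted by~$f$, where $n$ is the denominator of the rotation number of $f$ at~$\alpha$. These rays together with $\alpha$ partition~$\C$ into $n$ open sectors $V_0,\ldots,V_{n-1}$, and $f$ cyclically permutes the sector-boundary rays. Using $f$ on all but one of the pieces $V_i\cap J_f$ and an appropriate branch of $f^{-1}$ on the remaining piece produces an order-$n$ piecewise canonical self-homeomorphism $g$ of $J_f$ whose only breakpoint is~$\alpha$, a fixed point; by Theorem~\ref{thm:PiecewiseCanonicalQuasisymmetries}, $g$ is a quasisymmetry. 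For the ping-pong, observe that $\alpha$ is not in the critical cycle, so $\partial U_0$ lies in a single sector, say $V_0$, and every limb of $\partial U_0$ except the one containing~$\alpha$, call it $L_{p_\alpha}$, is entirely contained in~$V_0$ (being a connected subset of $J_f$ not meeting $\alpha$). Set $X_h := J_f\cap V_0$ and $X_g := J_f\setminus\overline{V_0}$; these are disjoint and non-empty. For each $1\leq l < n$, cyclic rotation of sectors by $g$ gives $g^l(X_h)\subseteq J_f\cap V_l\subseteq X_g$; and for each $1\leq i < d$, since $X_g\subseteq L_{p_\alpha}$ and $\sigma^i(p_\alpha)\neq p_\alpha$, we have $h^i(X_g)\subseteq L_{\sigma^i(p_\alpha)}\subseteq V_0$, hence $h^i(X_g)\subseteq X_h$. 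The Ping-Pong Lemma for Free Products then yields $\langle h,g\rangle\cong \Z_d*\Z_n$.

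The main obstacle lies in the construction of~$h$. Verifying that the B\"{o}ttcher rotation $\sigma$ globalizes to a well-defined self-homeomorphism of $J_f$ requires careful bookkeeping with the limb decomposition and the Douady--Hubbard projection $\pi_{U_0}$, checking both that the canonical identifications $L_{\sigma^i(p)}\to L_{\sigma^{i+1}(p)}$ patch together consistently as $p$ varies on $\partial U_0$ and that they glue continuously to $\sigma$ across $\partial U_0$, and then confirming that the result is piecewise canonical with only finitely many periodic breakpoints. Once this is in place, the remaining steps are essentially formal.
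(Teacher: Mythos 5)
Your overall strategy is the same as the paper's: ping-pong between an order-$n$ rotation of the sectors at a rotational fixed point and an order-$d$ rotation ``about the critical point,'' with the projection $\pi_{U_0}$ and the sector decomposition supplying the ping-pong sets. The order-$n$ generator and the ping-pong argument are essentially right, up to two small points: the return map on the sector containing the critical point must be a branch of $f^{-(n-1)}$ (the inverse of $f^{n-1}$ from the preceding sector), not of $f^{-1}$; and for $n=2$ your inclusion $g(X_h)\subseteq X_g$ is an equality rather than a proper one, which is why the paper shrinks its set to $X_K=J_f\cap\pi_U^{-1}(\overline{U}\setminus\{a\})$ before playing ping-pong.

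The genuine gap is in your construction of the order-$d$ generator $h$. The claim that $f^k$ maps each limb $L_{\sigma^i(p)}$ homeomorphically onto $L_{f^k(p)}$ is false: a limb of $U_0$ can contain critical points of $f^k$ (points of $\bigcup_{j<k}f^{-j}(0)$ lying in Fatou components attached to that limb), and then $f^k$ has degree greater than one on the limb, even on its Julia part. Concretely, for the basilica $f(z)=z^2-1$ (so $d=2$, $k=2$), the limb of $U_0$ at $\alpha=(1-\sqrt5)/2$ contains $\partial U_{-1}$, on which $f^2$ acts with degree $2$, and $f^2(L_\alpha)$ is far larger than $L_\alpha$; so the inverse branches $(f^k|_{L_{\sigma^{i+1}(p)}})^{-1}$ you want to compose with do not exist, and the gluing cannot be carried out as described. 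You flagged this step as the main obstacle, and indeed it is where the argument breaks. The repair is much simpler than what you attempted: after normalizing $f(z)=z^d+c$, the Euclidean rotation $k(z)=e^{2\pi i/d}z$ satisfies $f\circ k=f$, hence preserves $J_f$, has order $d$, is an isometry (so a quasisymmetry with no breakpoints at all), is globally canonical (a branch of $f^{-1}\circ f$), and restricts on $\overline{U_0}$ to a power of your B\"ottcher rotation $\sigma$. This is exactly the generator the paper uses; since $k$ commutes with $\pi_{U_0}$, substituting it for your $h$ makes the rest of your ping-pong go through verbatim.
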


Note that any unicritical polynomial $f$ with periodic critical point has connected Julia set~\cite[Theorem~9.5]{Milnor}. Our proof of Theorem~\ref{thm:UnicriticalTheorem} will use \hyperref[ref:ExternalRays]{external rays} for~$f$.  Recall that these are the images of the radial lines under a Riemann map $\Phi\colon\D\to U$, where $U$ is the Fatou component for $f$ that contains~$\infty$.  B\"{o}ttcher proved that there exists such a $\Phi$ that satisfies $\Phi(z^d) = f(\Phi(z))$ for all $z\in\D$, where $d$ is the degree of~$f$ \cite[Theorems~9.1 and~9.3]{Milnor}.  It follows that $f$ maps external rays to external rays.  An external ray is \newword{periodic} if it maps to itself under some iterate of~$f$.

We will call a fixed point $p$ for $f$ \newword{rotational} if $p\in J_f$ and $p$ is the landing point for some periodic external ray $R$ which is not a fixed ray. For example, the fixed point $p=\bigl(1-\sqrt{5}\bigr)/2$ for the basilica polynomial $f(z)=z^2-1$ is rotational, since it is the landing point of the $1/3$ and $2/3$ rays (see Example~\ref{ex:Basilica}). As shown in Figure~\ref{fig:BasilicaRearrangement} for the basilica, rotational fixed points often have associated finite-order quasisymmetries.

\begin{lemma}\label{lem:ExistsRotationalFixedPoint}
Let $f$ be a unicritical polynomial whose critical point is periodic but not fixed.  Then $f$ has a rotational fixed point.
\end{lemma}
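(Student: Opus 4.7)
The plan is to combine a short counting argument with the classical Douady--Yoccoz landing theorem. After an affine conjugacy I may take $f(z)=z^d+c$ with the unique critical point at the origin. The hypothesis that the critical orbit has period $n\geq 2$ makes $f$ postcritically finite and hyperbolic with connected Julia set, and ensures that the only attracting cycle of $f$ is the critical cycle, which has period at least~$2$.

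First I would count fixed points. The polynomial $f(z)-z=z^d-z+c$ has degree~$d$, so $f$ has $d$ fixed points counted with multiplicity. Hyperbolicity rules out parabolic fixed points, so all $d$ roots of $f(z)-z$ are simple and $f$ has $d$ distinct fixed points. Since none of these lies in the period-$n$ critical cycle, no fixed point is attracting, so all $d$ of them are repelling and hence lie in~$J_f$.

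Next I would count fixed external rays. In B\"ottcher coordinates $f$ acts on the angles of external rays by $\theta\mapsto d\theta\pmod 1$, so the fixed external rays are precisely those at angles $k/(d-1)$ for $k=0,\ldots,d-2$, giving $d-1$ of them. Each of these rays lands at a fixed point of~$f$ on~$J_f$, so by the pigeonhole principle at least one fixed point $p$ is not the landing point of any fixed ray.

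Finally, I would invoke the Douady--Yoccoz landing theorem (see e.g.\ \cite[\S18]{Milnor}): for a polynomial with connected Julia set, every repelling periodic point is the landing point of at least one, and only finitely many, periodic external rays. Applied to the repelling fixed point~$p$, this produces a periodic external ray $R$ landing at~$p$. Since $f$ fixes~$p$, it permutes this finite nonempty set of rays, so every ray landing at~$p$ is periodic under~$f$. Because no fixed ray lands at~$p$, none of them is fixed, and in particular $R$ has period at least~$2$. Thus $p$ is a rotational fixed point. The only delicate step is the appeal to the landing theorem, but its hypotheses (connectedness of $J_f$ and repellingness of~$p$) are already guaranteed by the postcritically finite hyperbolic setup, so the rest is bookkeeping.
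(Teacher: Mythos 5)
Your proposal is correct and follows essentially the same route as the paper's proof: count the $d$ distinct (necessarily repelling) fixed points, note there are only $d-1$ fixed external rays, and apply the landing theorem to a fixed point with no fixed ray landing at it. The only cosmetic difference is that you justify the absence of parabolic fixed points via hyperbolicity while the paper cites postcritical finiteness; both are valid.
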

\begin{proof}
Let $d$ be the degree of~$f$.  Since $f$ is postcritically finite, every fixed point for $f$ must be either repelling or superattracting~\cite[Corollary~14.5]{Milnor}. In particular, $f$ has no parabolic fixed points, so the polynomial $f(z)-z$ has no common roots with its derivative $f'(z)-1$.  It follows that the roots of $f(z)-z$ are all distinct, so $f$ has exactly $d$ distinct fixed points. Since the critical point of $f$ is not fixed, all of these fixed points are repelling and lie in~$J_f$. By \cite[Theorem~18.11]{Milnor}, each of these is the landing point of at least one periodic external ray.  But the mapping $\theta\mapsto d\theta$ on the unit circle has exactly $d-1$ fixed points, namely the integer multiples of $1/(d-1)$.  By B\"ottcher's theorem, it follows that $f$ has exactly $d-1$ external rays which are fixed.  We conclude that at least one fixed point $p$ of $f$ must be a landing point for a periodic ray which is not fixed, and therefore $p$ is rotational.
\end{proof}

If $p$ is a rotational fixed point and $R$ is a non-fixed periodic ray that lands at $p$, then each ray in the orbit of $R$ must land at $p$ as well, and these rays subdivide the complex plane into two or more closed regions, which we refer to as \newword{sectors}. 

\begin{lemma}\label{lem:RotationAtFixedPoint}
Let $f$ be a unicritical polynomial with periodic critical point. 
 Suppose $f$ has a rotational fixed point~$p$, and let $S_1,\ldots,S_n$ be a corresponding partition into sectors.  Then there exists a quasisymmetry of $J_f$ of order~$n$ that cyclically permutes the sets $\{J_f\cap S_i\}_{i=1}^n$.\end{lemma}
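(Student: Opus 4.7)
The plan is to construct the desired order-$n$ quasisymmetry $h$ as a piecewise canonical homeomorphism of $J_f$ and then invoke Theorem~\ref{thm:PiecewiseCanonicalQuasisymmetries}.

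First I would analyze the local action of $f$ at $p$. Since $p$ is a repelling fixed point and $f$ is holomorphic, $f$ is a local biholomorphism at $p$ that cyclically permutes the $n$ rays landing at $p$. This permutation is a rotation by some $r/n$ with $\gcd(r,n)=1$, because the $n$ rays form a single orbit under $f$. After relabeling the sectors cyclically, I may assume $f$ maps each local sector $S_i$ near $p$ onto $S_{i+1}$ (indices mod $n$). Let $i_0$ denote the index of the sector containing the unique critical point of $f$.

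Next I would establish the key claim: for each $i \ne i_0$, $f$ restricts to a homeomorphism $f\colon J_f \cap S_i \to J_f \cap S_{i+1}$. Local injectivity is automatic since $S_i$ contains no critical points. Global injectivity and surjectivity onto the correct target follow from a preimage count: a generic point $y \in J_f \cap S_{i+1}$ has $d$ preimages in $J_f$, and, aside from a single preimage in $S_i$, the remaining $d-1$ sheets of the cover are absorbed into the sector $S_{i_0}$. Consequently the composition $g := f^{n-1}|_{J_f \cap S_{i_0+1}} \colon J_f \cap S_{i_0+1} \to J_f \cap S_{i_0}$ is itself a homeomorphism, being an $(n-1)$-fold composition of single-step homeomorphisms whose domains all avoid $S_{i_0}$.

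Now I would define $h$ piecewise by $h = f$ on $J_f \cap S_i$ for $i \ne i_0$, and $h = g^{-1}$ on $J_f \cap S_{i_0}$. Each $J_f \cap S_i$ is then mapped homeomorphically onto $J_f \cap S_{i+1}$, and the only potential breakpoint is $p$, where all sectors meet. A direct computation gives $h^n = \mathrm{id}$: for example on $J_f \cap S_{i_0}$ one has $h^n(z) = f^{n-1}(g^{-1}(z)) = g(g^{-1}(z)) = z$, and on $J_f \cap S_j$ for $j \ne i_0$ one traces the orbit through all sectors and applies $g^{-1}$ at the last step. Thus $h$ is a homeomorphism of order $n$ that cyclically permutes the $n$ pieces. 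Since $h$ is canonical on each piece and its single breakpoint $p$ is periodic under $f$, Theorem~\ref{thm:PiecewiseCanonicalQuasisymmetries} implies $h$ is a quasisymmetry.

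The main obstacle will be justifying the preimage-counting claim used above. Verifying it requires carefully tracking how the critical orbit interacts with the sector partition at $p$ and its iterated preimages; crucially, the hyperbolic postcritically finite hypothesis places the critical orbit entirely in the Fatou set, so the excess preimage sheets concentrate in the single sector $S_{i_0}$ rather than being scattered among the others.
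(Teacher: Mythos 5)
Your construction coincides with the paper's proof: $h$ acts as $f$ on each $J_f\cap S_i$ with $i\ne i_0$ and as the inverse of the homeomorphism $f^{n-1}\colon J_f\cap S_{i_0+1}\to J_f\cap S_{i_0}$ on the critical sector, it is piecewise canonical with the single periodic breakpoint $p$, and it is a quasisymmetry by Theorem~\ref{thm:PiecewiseCanonicalQuasisymmetries}. The sector-mapping claim you flag as the main obstacle---that every preimage of $p$ other than $p$ itself lies in the sector containing the critical point, so that $f$ carries each non-critical sector homeomorphically onto its successor---is precisely the step the paper also asserts without proof (it is the standard sector lemma at a repelling fixed point); be aware, though, that your proposed reason (the critical orbit lying in the Fatou set) does not by itself give it, whereas it does follow, for example, from noting that the component of $\Chat\setminus f^{-1}(R_1\cup\cdots\cup R_n\cup\{p\})$ containing the critical point maps onto the critical-value sector with full degree $d$, which forces all $d$ preimages of $p$ into the closure of that single critical sector.
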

\begin{proof}
Let $R_1,\ldots, R_n$ ($n\geq 2$) be an orbit of external rays that land at $p$, where $R_{i+1}=f(R_i)$ for each~$i$.  Let $S_i$ denote the sector that lies counterclockwise from~$R_i$, and suppose without loss of generality that $S_n$ contains the critical point of~$f$.  Then $f$ maps each $S_i$ homeomorphically to $S_{i+1}$, so $f$ maps each $J_f\cap S_i$ to $J_f\cap S_{i+1}$.  Moreover, $f^{n-1}$ maps $J_f\cap S_1$ homeomorphically to $J_f\cap S_n$, and this homeomorphism has an inverse $g\colon J_f\cap S_n\to J_f\cap S_1$.  Let $h\colon J_f\to J_f$ be the homeomorphism of order $n$ that maps $J_f\cap S_i$ to $J_f\cap S_{i+1}$ via~$f$ for each $1\leq i\leq n-1$, and maps $J_f\cap S_n$ to $J_f\cap S_1$ via~$g$.  We claim that $h$ is a quasisymmetry.

By Proposition~\ref{prop:PolynomialJuliaSetsFinitelyRamified}, we know that $J_f$ is finitely ramified.  But each connected component $C$ of $J_f\setminus\{p\}$ lies in one of the sectors $S_i$, so $h$ either agrees with either $f$ or $g$ on $C$.  Since $f^{n-1}\circ g$ is the identity, it follows that $h$ is piecewise canonical with a single breakpoint at~$p$, so $h$ is a quasisymmetry by Theorem~\ref{thm:PiecewiseCanonicalQuasisymmetries}.
\end{proof}

\begin{remark}
Lemma~\ref{lem:RotationAtFixedPoint} can be generalized considerably.  First, though the conclusion of the lemma only produces a single quasisymmetry of order~$n$, there is actually a natural copy of the symmetric group $\Sigma_n$ in the quasisymmetry group which permutes the $n$ sectors.  Second, there is no need for $f$ to be unicritical---the lemma works just as well if $f$ has multiple critical points, as long as all of them lie in a single sector~$S_i$.  Indeed, as long as there is any sector $S_j$ that does not contain a critical point, then we can construct a quasisymmetry of order two that swaps $J_f\cap S_j$ with $J_f\cap f(S_j)$.  Finally, note that it is often possible to apply this lemma to points $p$ of period $k\geq 2$ by regarding $p$ as a fixed point of~$f^k$.
\end{remark}

\begin{proof}[Proof of Theorem~\ref{thm:UnicriticalTheorem}]
Let $f$ be a unicritical polynomial of degree $d\geq 2$ whose critical point is periodic.  We will use the ping-pong lemma for free products (see Appendix~\ref{ap:PingPongLemmas}) to produce a subgroup of the quasisymmetry group isomorphic to $\Z_d* \Z_n$. 

First, conjugating $f$ by an affine function, we may assume that $f(z)=z^d+c$ for some complex constant~$c$, with $0$ being the critical point.  If $c=0$, then the Julia set $J_f$ is the unit circle in the complex plane, and it is well known that the quasisymmetry group of a circle contains $\Z_n * \Z_d$ for all $n\geq 2$.

Suppose then that $c\ne 0$.  Then $0$ is not fixed, so by Lemma~\ref{lem:ExistsRotationalFixedPoint} the polynomial $f$ has at least one rotational fixed point~$p$. Let $S_1,\ldots,S_n$ be a corresponding partition into sectors, where $S_1$ is the sector that contains~$0$.  By Lemma~\ref{lem:RotationAtFixedPoint}, there exists a quasisymmetry $h$ of $J_f$ of order~$n$ that cyclically permutes the sets $\{J_f\cap S_i\}_{i=1}^n$, say $h(S_i)=S_{i+1}$ for each $i<n$. Let $k\colon J_f\to J_f$ be the map $k(z)=e^{2\pi i/d}z$, which is a symmetry of $J_f$ since $f\circ k=f$. Note that $k$ is an isometry and hence a quasisymmetry. Our plan is to apply the ping-pong lemma to the cyclic groups $H=\langle h\rangle\cong \Z_n$ and $K=\langle k\rangle\cong \Z_d$.

Let $U$ be the Fatou component for $f$ that contains~$0$, and let $\pi_U\colon K_f\to \overline{U}$ be the projection of $K_f$ onto $U$.  Let $a=\pi_U(p)$ and define
\[
X_K = J_f \cap \pi_U^{-1}(\overline{U}\setminus \{a\}).
\]
Note that $\pi_U^{-1}(\overline{U}\setminus \{a\})$ is connected since $\overline{U}\setminus\{a\}$ is connected and each $\pi_U^{-1}(b)$ for $b\ne a$ is path connected by regulated arcs.  In particular, $\pi_U^{-1}(\overline{U}\setminus\{a\})$ lies entirely within the sector~$S_1$, so $X_K\subset S_1$.  Next, define
\[
X_H = J_f\cap (S_2\cup \cdots \cup S_n).
\]
Note that $X_H$ is disjoint from $X_K$, and $h^i(X_K)\subseteq J_f\cap S_{i+1}\subseteq X_H$ for all $1\leq i<n$.

Finally, since $f\circ k=f$, the grand orbit of $0$ is invariant under $k$. In particular, $k$ maps centers of Fatou components to centers of Fatou components, so it maps internal rays to internal rays and hence maps regulated arcs to regulated arcs.  It follows that $\pi_U\circ k=k\circ \pi_U$.  But $X_H\subseteq \pi_U^{-1}(a)$ since $X_H$ is disjoint from $X_K$, so $k^i(X_H)\subseteq \pi_U^{-1}(k^i(a))$ for all~$i$.  Since the points $k^i(a)$ are distinct for $0\leq i<d$, we conclude that $k^i(X_H)\subseteq X_K$ for all $1\leq i<d$, so $\langle h,k\rangle \cong \Z_n*\Z_d$ by the ping-pong lemma for free products.
\end{proof}

\begin{example}
Consider the unicritical polynomial $f(z)=z^3+c$, where $c\approx 0.2099 + 1.0935i$ is chosen so that the critical point $0$ has period~$4$.  The Julia set for this polynomial is shown in Figure~\ref{fig:FunkyCubic}.
\begin{figure}
\centering
\includegraphics{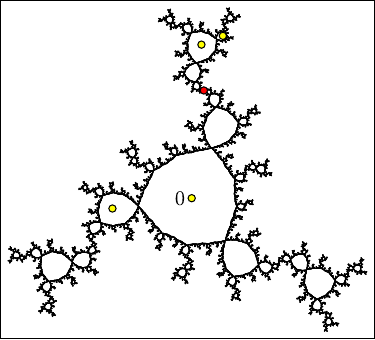}
\caption{Julia set for $f(z)=z^3+c$, where $c\approx 0.2099+1.0935i$. The four points of the critical cycle are shown in yellow, and the rotational fixed point is shown in red.}
\label{fig:FunkyCubic}
\end{figure}%
By Lemma~\ref{lem:ExistsRotationalFixedPoint}, this polynomial must have at least one rotational fixed point, and indeed there is such a point at $p\approx 0.0816+0.7257i$ which is the landing point of exactly two external rays.  Then the quasisymmetry group of $J_f$ has a subgroup $\langle h,k\rangle$ isomorphic to the modular group $\Z_2*\Z_3$, where $h$ is an order-two quasisymmetry that switches the two components of $J_f\setminus \{p\}$ (mapping the top component to the bottom by $h$, and the bottom component to the top by the inverse of this), and $k$ is rotation by $2\pi/3$ at~$0$. The authors are not aware of any other quasisymmetries of this Julia set, and we conjecture that the full quasisymmetry group of $J_f$ is isomorphic to the modular group. 
\end{example}

\begin{remark}
The quasisymmetries $h$ and $k$ defined in the proof of Theorem~\ref{thm:UnicriticalTheorem} can easily be extended to piecewise conformal maps on the filled Julia set~$K_f$, which defines an action of $\Z_n*\Z_d$ on~$K_f$.  If $A$ is the regulated arc from the critical point to the chosen rotational fixed point, then the union of the arcs in the orbit of $A$ under $\langle h,k\rangle$ is a tree $T$, and $\langle h,k\rangle$ acts on $T$ with two orbits of vertices (namely the orbits of the critical point and of $p$) and cyclic vertex stabilizers.  In particular, $T$ is a geometric realization of the Bass--Serre tree for $\Z_n * \Z_d$.  For $n\geq 3$, we expect that the quasisymmetry group contains the uncountable subgroup of the automorphism group of $T$ consisting of automorphisms that preserve the counterclockwise order of edges at all vertices of degree~$d$.
\end{remark}

\subsection{Quasisymmetry groups that contain \texorpdfstring{$\boldsymbol{F}$}{F}}
\label{subsec:quasisymmetrygroupscontainingF}

The goal of this section is to prove the following theorem, which is part (2) of Theorem~\ref{thm:BigTheorem} from the introduction.

\begin{theorem}\label{thm:ContainsF}
Let $f$ be a postcritically finite hyperbolic polynomial, and suppose one of the leaves of the Hubbard tree for $f$ is contained in a periodic cycle of local degree\/~$2$. Then the quasisymmetry group of $J_f$ contains Thompson's group~$F$.
\label{thm:rearrangementscontainF}
\end{theorem}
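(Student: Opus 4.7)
The plan is to construct an explicit injective homomorphism $\phi\colon F\hookrightarrow QS(J_f)$ by realizing each element of $F$ as a piecewise canonical self-homeomorphism of $J_f$ and then invoking Theorem~\ref{thm:PiecewiseCanonicalQuasisymmetries}.

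First I would set up the dynamics near $U$. Let $U$ be the Fatou component centered at the given leaf $\ell$ and let $n$ be its period, so that $f^n\colon\overline{U}\to\overline{U}$ is a degree-two branched cover ramified only at $\ell$. B\"ottcher's theorem at the superattracting fixed point $\ell$ of $f^n$ yields a topological conjugacy $(\overline{U},f^n,\ell)\cong(\overline{\mathbb{D}},z\!\mapsto\!z^2,0)$, so $f^n|_{\partial U}$ is topologically the doubling map on a circle and has a unique fixed point. The leaf hypothesis forces the Hubbard tree to meet $\overline{U}$ in a single internal ray from $\ell$; because that ray is $f^n$-invariant it must carry B\"ottcher angle~$0$, so its endpoint $p_0\in\partial U$ is precisely the unique fixed point of $f^n|_{\partial U}$. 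Every postcritical point of $f$ other than $\ell$ lies in the single connected component of $K_f\setminus\overline{U}$ attached at $p_0$ and therefore projects to $p_0$ under $\pi_U$.

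Next I would set up the cell structure and establish the key sector-level property. Using the construction in Proposition~\ref{prop:PolynomialJuliaSetsFinitelyRamified}, fix a finite invariant branch cut $S$ for $J_f$ containing the forward $f$-orbit of $p_0$; Theorems~\ref{thm:FinitelyRamifiedJulia} and~\ref{thm:JuliaSetUndistorted} then endow $J_f$ with an undistorted finitely ramified cell structure. The nested preimages $\bigcup_{k\ge 0}(f^n)^{-k}(p_0)\cap\partial U$ partition $\partial U$ into a binary tree of \emph{dyadic arcs} $\alpha$, combinatorially modeling the dyadic intervals of $\mathbb{R}/\mathbb{Z}$ under the doubling map. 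For each dyadic arc $\alpha$, let $\mathcal{Y}_\alpha:=\pi_U^{-1}(\alpha)\cap J_f$ denote the corresponding \emph{sector}. I would then verify (a) each $\mathcal{Y}_\alpha$ is a finite union of cells from the structure defined by $S$; and (b) $f^n$ restricts to a homeomorphism $\mathcal{Y}_\alpha\to\mathcal{Y}_{f^n(\alpha)}$ whenever $\alpha$ has level $\ge 2$, using the identity $\pi_U\circ f^n=f^n\circ\pi_U$ on $J_f$ (because $f^n$ preserves regulated arcs and fixes $\overline{U}$ setwise), the fact that $f|_{J_f}$ is an unbranched covering (hyperbolicity places all critical points of $f$ in the Fatou set), and a lifting argument on the connected sectors.

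Given $g\in F$, choose matched dyadic subdivisions $\{\alpha_i\}_{i=1}^{m}$ and $\{\beta_i\}_{i=1}^{m}$ of $\partial U$ with all levels $\ge 1$, on which $g$ acts by the pairing $\alpha_i\mapsto\beta_i$; say $\alpha_i$ has level $k_i$ and $\beta_i$ has level $j_i$. For each $i$, property (b) iterated yields a canonical homeomorphism $\mathcal{Y}_{\alpha_i}\to\mathcal{Y}_{\beta_i}$: first apply $(f^n)^{k_i-1}$ forward to land in some sector of level $1$, then apply $j_i-1$ suitably chosen inverse branches of $f^n$ to arrive at $\mathcal{Y}_{\beta_i}$. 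Gluing these pieces defines $\phi(g)\colon J_f\to J_f$; agreement at common boundary points (all in the grand $f$-orbit of $p_0$) follows from the construction, so $\phi(g)$ is a well-defined piecewise canonical homeomorphism with periodic or preperiodic breakpoints. Theorem~\ref{thm:PiecewiseCanonicalQuasisymmetries} then gives $\phi(g)\in QS(J_f)$. The restriction $\phi(g)|_{\partial U}$ is exactly the standard dyadic action of $g$ on $\partial U\cong S^1$ via the B\"ottcher correspondence, which simultaneously verifies the group relations of $F$ and the injectivity of $\phi$. The main obstacle lies in property (b) and the gluing: the sector-level homeomorphism requires a careful connectedness argument exploiting the leaf hypothesis to rule out critical-value obstructions under iterates of $f^n$, and the agreement of the canonical pieces along shared preimages of $p_0$ must be verified to yield a genuine homeomorphism.
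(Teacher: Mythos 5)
Your strategy is essentially the one the paper uses: transport the dyadic action of $F$ on $\partial U$ into $J_f$ through the fibers of $\pi_U$, realize each element as a piecewise canonical homeomorphism with periodic or preperiodic breakpoints, and invoke Theorem~\ref{thm:PiecewiseCanonicalQuasisymmetries}. The problem is that your key claim~(b) is false as stated, and it fails at exactly the point where the construction is delicate. The fiber $\pi_U^{-1}(p_0)$ over the angle~$0$ contains \emph{all} of the postcritical points other than the leaf, and hence may contain critical points of $f$; consequently $f^n$ need not be injective on any sector $\mathcal{Y}_\alpha$ whose arc contains the angle~$0$, no matter how deep its level. Concretely, take $f(z)=z^2-1$, $n=2$, and $U$ the component containing $-1$, so $p_0$ is the fixed point $\bigl(1-\sqrt{5}\bigr)/2$. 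The Fatou component $U_1$ containing $1$ lies in a component of $K_f\setminus\overline{U}$ attached at $p_0$, so $\partial U_1\subseteq \pi_U^{-1}(p_0)\subseteq\mathcal{Y}_\alpha$ for the level-two arc $\alpha$ containing angle~$0$; but $f^2$ maps $\partial U_1$ onto $\partial U$ with degree two, so $f^2$ is two-to-one on a subset of $\mathcal{Y}_\alpha$. The same issue arises at the fiber over angle $1/2$, which may also contain critical points (Lemma~\ref{lem:ProjectionProperties}(2) only pushes critical points into the fibers over $0$ and $1/2$; it does not remove them). The repair is what the paper's proof actually does: every element of $F$ fixes the angle~$0$, so one declares each constructed homeomorphism to be the identity on the fiber over~$0$; one uses half-open arcs so that forward iterates of $f$ are only ever applied to fibers over angles $t\ne 0,1/2$; and the pieces adjacent to $1/2$ are realized as compositions of the form $h_2^{-1}\circ h_1$ where both $h_i$ are forward iterates defined on non-critical fibers. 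Without this routing, the ``canonical homeomorphism $\mathcal{Y}_{\alpha_i}\to\mathcal{Y}_{\beta_i}$'' you invoke does not exist for the pieces meeting angle~$0$.

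A secondary gap: you define $\phi(g)$ separately for each $g\in F$ and assert that $\phi$ is a homomorphism, but this needs the sector-to-sector canonical maps to be independent of the chosen factorization into forward and inverse branches and compatible under refinement of the dyadic subdivision, which you do not verify. (It is fixable: a branch of $f^{-m}\circ f^{k}$ on a connected set is determined by its value at one point, so one can normalize each sector map by where it sends the fiber over the left endpoint.) The paper sidesteps both this and the need for injectivity of $\phi$ by exhibiting only two explicit generators $g_0,g_1$ and applying a ping-pong lemma for $F$, which derives the defining relations and faithfulness from three set-theoretic conditions on the single set $R=J_f\cap\pi_U^{-1}(\gamma((1/2,1)))$. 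If you want to keep your global construction, you must add both the normalization of branches and the verification that the glued pieces agree on the closures of adjacent sectors.
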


Here the \newword{Hubbard tree} $H_f$ for a postcritically finite polynomial $f$ is the union of the regulated arcs that join every pair of points in the postcritical set (not including~$\infty$).  Douady and Hubbard proved that $H_f$ is a finite topological tree, that every arc in $H_f$ is regulated, and that $f(H_f)\subseteq H_f$.  For example, Figure~\ref{fig:CubicJuliaHubbardTree} shows the Hubbard tree for a certain postcritically finite cubic polynomial. 
\begin{figure}
    \centering
\includegraphics[scale=1.25]{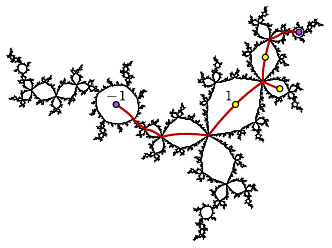}
\caption{The Julia set and Hubbard tree for the polynomial $f(z)=a\bigl(z^3-3z\bigr)+b$, where where $a\approx 0.0802+0.237i$ and $b\approx 1.9012+0.7383i$.  The critical point at $-1$ is part of the purple $2$-cycle, and the critical point at $1$ is part of the yellow $3$-cycle.}   \label{fig:CubicJuliaHubbardTree}
\end{figure}

The following lemma establishes some further properties of the projection functions $\pi_U$ that we will need for the proof.

\begin{lemma}\label{lem:ProjectionProperties}Suppose $f$ is a postcritically finite polynomial, and let $U$ be a bounded Fatou component for $f$.
\begin{enumerate}
    \item If $S\subseteq \partial U$, then the closure of $\pi_U^{-1}(S)$ is the union of $\pi_U^{-1}(S)$ with the closure of~$S$.\smallskip
    \item If $a\in\partial U$ and $\pi_U^{-1}(a)$ contains a critical point, then $\pi_{f(U)}^{-1}(f(a))$ contains a critical value. \smallskip
    \item If $a\in\partial U$ and $\pi_U^{-1}(a)$ contains no critical points, then $f$ maps $\pi_U^{-1}(a)$ homeomorphically to $\pi_{f(U)}^{-1}(f(a))$.
\end{enumerate}
\end{lemma}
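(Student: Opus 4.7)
The plan is to handle the three parts in order, relying on the continuity of $\pi_U$, the fact that $\pi_U$ is constant on every connected component of $K_f \setminus \overline{U}$, and the fact that $f$ carries internal rays of a Fatou component $W$ to internal rays of $f(W)$.

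For part~(1), the inclusion $\pi_U^{-1}(S) \cup \overline{S} \subseteq \overline{\pi_U^{-1}(S)}$ is automatic since $\pi_U|_{\partial U}$ is the identity, giving $S \subseteq \pi_U^{-1}(S)$. For the reverse, take $p_n \to p$ with $\pi_U(p_n) \in S$. If $p \in \partial U$ then $\pi_U(p) = p \in \overline{S}$ by continuity. The case $p \in U$ is ruled out since it would force $p = \pi_U(p) \in \overline{S} \subseteq \partial U$. Otherwise $p$ lies in some component $C$ of $K_f \setminus \overline{U}$, which is open in $K_f$ by local connectedness, so eventually $p_n \in C$; since $\pi_U$ is constant on $C$ with value $a := \pi_U(p)$, we have $a \in S$ and $p \in \pi_U^{-1}(a) \subseteq \pi_U^{-1}(S)$.

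For part~(2), let $c$ denote the center of $U$ and let $c_* \in \pi_U^{-1}(a)$ be a critical point of $f$. The regulated arc $\alpha$ from $c_*$ to $c$ first meets $\partial U$ at $a$, and its sub-arc from $a$ to $c$ is an internal ray of $U$. Applying $f$, this sub-arc maps to an internal ray of $f(U)$ from $f(a)$ to $f(c)$, where $f(c)$ is the center of $f(U)$. Therefore the regulated arc from $f(c_*)$ to $f(c)$ first meets $\partial f(U)$ at $f(a)$, so $\pi_{f(U)}(f(c_*)) = f(a)$, and hence $f(c_*) \in \pi_{f(U)}^{-1}(f(a))$.

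For part~(3), applying the same reasoning at each $p \in \pi_U^{-1}(a)$ gives $\pi_{f(U)}(f(p)) = f(a)$, so $f(\pi_U^{-1}(a)) \subseteq \pi_{f(U)}^{-1}(f(a))$. By part~(1) with $S = \{a\}$, the set $\pi_U^{-1}(a)$ is closed in $K_f$ and hence compact, and the absence of critical points makes $f|_{\pi_U^{-1}(a)}$ a proper local homeomorphism onto its image. For injectivity, recall that $\pi_U^{-1}(a)$ is the union of $\{a\}$ with certain components $C_i$ of $K_f \setminus \{a\}$, each satisfying $\overline{C_i} = C_i \cup \{a\}$. Using this, the regulated arc between any two points of $\pi_U^{-1}(a)$ must stay in $\pi_U^{-1}(a)$ (it either remains in a single $C_i \cup \{a\}$ or passes through $a$ once to cross between $C_i$'s). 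If $p_1 \neq p_2$ in $\pi_U^{-1}(a)$ satisfy $f(p_1) = f(p_2)$, the unique regulated arc joining them in $K_f$ lies in $\pi_U^{-1}(a)$ and contains no critical points, so its $f$-image is a regulated arc in $K_f$ with coinciding endpoints, contradicting uniqueness. Finally, the image of $f|_{\pi_U^{-1}(a)}$ is closed in the connected target $\pi_{f(U)}^{-1}(f(a))$ by compactness and open by the local-homeomorphism property, hence surjective; a continuous bijection from a compact space to a Hausdorff space is automatically a homeomorphism.

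The main obstacle is the injectivity step in part~(3): one must confirm both that the peninsula $\pi_U^{-1}(a)$ is closed under taking regulated arcs and that the regulated-arc structure is preserved under $f$ away from critical points. These facts rest on the tree-like behavior of regulated arcs in filled Julia sets and the uniqueness of regulated arcs between two points, and a careful case analysis is required to rule out arcs leaving the peninsula or folding onto themselves under~$f$.
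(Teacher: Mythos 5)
Your overall architecture matches the paper's, and parts (1) and the injectivity half of part (3) are essentially correct: part (1) is the paper's argument phrased sequentially, and your injectivity argument (the regulated arc between two points of $\pi_U^{-1}(a)$ stays in $\pi_U^{-1}(a)$, contains no critical points, and $f$ is injective on such an arc) is the paper's, reached via the separation property of $a$ instead of the tree inequality $[p,q]\subseteq[p,a]\cup[a,q]$. But there are two genuine gaps, and neither is the one you flag at the end. The first is in part (2): the step ``therefore the regulated arc from $f(c_*)$ to $f(c)$ first meets $\partial f(U)$ at $f(a)$'' does not follow from what precedes it. Knowing that the internal ray from $a$ to $c$ maps to the internal ray from $f(a)$ to $f(c)$ says nothing about the regulated arc from $f(c_*)$ to $f(c)$; you need that $f$ carries the regulated arc $[c_*,c]$ to the regulated arc $[f(c_*),f(c)]$, and the Douady--Hubbard lemma giving this requires $[c_*,c]$ to have no critical points in its \emph{interior}. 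That can fail, since there may be further critical points between $c_*$ and $a$, in which case the image arc can fold and $f(c_*)$ need not lie in $\pi_{f(U)}^{-1}(f(a))$ at all. The paper handles this by replacing $c_*$ with an innermost critical point of $[c,c_*]$ (still in $\pi_U^{-1}(a)$), which suffices because only the existence of \emph{some} critical value in $\pi_{f(U)}^{-1}(f(a))$ is asserted. Without that reduction your deduction is a non sequitur. (The same issue does not arise in the first inclusion of part (3), because there $\pi_U^{-1}(a)$ is assumed critical-point free; you should say so explicitly.)

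The second gap is the surjectivity step of part (3), where you genuinely diverge from the paper. The assertion that $f(\pi_U^{-1}(a))$ is relatively open in $\pi_{f(U)}^{-1}(f(a))$ ``by the local-homeomorphism property'' is unjustified, and it hides the entire difficulty. That $f$ is a local homeomorphism of $\Chat$ near each point of $\pi_U^{-1}(a)$ does not make the restriction $\pi_U^{-1}(a)\to\pi_{f(U)}^{-1}(f(a))$ relatively open: given $y\in\pi_{f(U)}^{-1}(f(a))$ near $f(a)$, the local inverse at $a$ produces a preimage $q$ near $a$, but you must prove that $q$ lies in $\pi_U^{-1}(a)$ rather than in $\overline{U}$ or in $\pi_U^{-1}(b)$ for some nearby $b\ne a$, and nothing in your argument addresses this. (Away from $a$ openness is easy, since $\pi_U^{-1}(a)\setminus\{a\}$ is a union of relatively open components of $K_f\setminus\overline{U}$; the problem is concentrated at $a$ itself.) The paper instead proves surjectivity by path lifting: an arc in $\pi_{f(U)}^{-1}(f(a))$ from $f(a)$ to a non-postcritical point, avoiding $P_f$, lifts under the covering $f\colon K_f\setminus f^{-1}(P_f)\to K_f\setminus P_f$ to an arc starting at $a$ that meets $\overline{U}$ only at $a$ and hence lies in $\pi_U^{-1}(a)$; the postcritical points of the target are then recovered from closedness of the image. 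You would need to supply an argument of comparable substance to justify openness at $a$, or adopt the lifting argument.
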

\begin{proof}
For (1), let $\overline{S}$ be the closure of $S$, and let $X=\pi_U^{-1}(S)\cup \overline{S}$.  Clearly $X$ is contained in the closure of $\pi_U^{-1}(S)$, so it suffices to prove that $X$ is closed, i.e.~that
\[
K_f\setminus X =  \pi_U^{-1}\bigl(\overline{U}\setminus \overline{S}\bigr) \cup \bigcup_{a\in \overline{S}\setminus S} \bigl(\pi_U^{-1}(a)\setminus\{a\}\bigr)
\]
is relatively open in $K_f$.
Since $\overline{U}\setminus\overline{S}$ is relatively open in $\overline{U}$ and $\pi_U$ is continuous, the preimage $\pi_U^{-1}\bigl(\overline{U}\setminus\overline{S}\bigr)$ is relatively open in $K_f$, so it suffices to prove that $\pi_U^{-1}(a)\setminus\{a\}$ is relatively open in $K_f$ for each $a\in \partial U$.  But by statement~(i) above, $\pi_U^{-1}(a)\setminus \{a\}$ is a union of connected components of $K_f\setminus \overline{U}$.  Since $K_f$ is locally connected (by \cite[Theorem~18.3]{Milnor}) and $K_f\setminus \overline{U}$ is relatively open in $K_f$, each component of $K_f\setminus\overline{U}$ is relatively open in $K_f$.  We conclude that each $\pi_U^{-1}(a)\setminus\{a\}$ is relatively open in $K_f$, which proves that $X$ is closed.

For (2), suppose that $p$ is a critical point in $\pi_U^{-1}(a)$, and consider the regulated arc $[c,p]$. Replacing $p$ with another critical point if necessary, we may assume that $[c,p]$ has no critical points in its interior.  Then by \cite[Lemma~4.2]{OrsayNotes}, the function $f$ is one-to-one on $[c,p]$, and $f([c,p])$ is the regulated arc from $f(c)$ to~$f(p)$. This regulated arc goes through~$f(a)$, so $f(p)\in \pi_{f(U)}^{-1}(f(a))$, and hence $\pi_{f(U)}^{-1}(f(a))$ contains a critical value.

For (3), suppose $\pi_U^{-1}(a)$ contains no criticial points. We claim first that $f$ is one-to-one on~$\pi_U^{-1}(a)$.  To see this, let $p$ and $q$ be distinct points in $\pi_U^{-1}(a)$.  Since $[p,q]\subseteq [p,a]\cup [a,q]$, the entire regulated arc $[p,q]$ must lie in $\pi_U^{-1}(a)$.  Then $[p,q]$ does not contain any critical points of~$f$, so by \cite[Lemma~4.2]{OrsayNotes} the map $f$ is one-to-one on $[p,q]$.  In particular, $f(p)\ne f(q)$, which proves that $f$ is one-to-one on $\pi_U^{-1}(a)$. Since $\pi_U^{-1}(a)$ is compact and Hausdorff, it follows that $f$ maps $\pi_U^{-1}(a)$ homeomorphically to its image, so all that remains of (3) is to prove that $f(\pi_U^{-1}(a))=\pi_{f(U)}^{-1}(f(a))$.

To prove that  $f(\pi_U^{-1}(a))\subseteq \pi_{f(U)}^{-1}(f(a))$, let $p\in \pi_U^{-1}(a)$.  If $p=a$ then $f(p)=f(a)\in \pi_{f(U)}^{-1}(f(a))$ and we are done, so suppose $p\ne a$. Let $c$ be the center of~$U$, and observe that the regulated arc $[p,c]$ from $p$ to $c$ is the union of the interior ray $[a,c]$ in $\overline{U}$ with the regulated arc $[p,a]$.  Then $[p,c]$ has no critical points in its interior, so by \cite[Lemma~4.2]{OrsayNotes} the image $f([p,c])$ is the regulated arc from $f(p)$ to $f(c)$. This passes through~$f(a)$, which proves that $f(p)\in \pi_{f(U)}^{-1}(f(a))$, and therefore $f(\pi_U^{-1}(a))\subseteq \pi_{f(U)}^{-1}(f(a))$.

Finally, to prove that $f$ maps $\pi_U^{-1}(a)$ onto $\pi_{f(U)}^{-1}(f(a))$, let $p$ be a point in $\pi_{f(U)}^{-1}(f(a))$, and suppose first that $p$ does not lie in the postcritical set $P_f$. If $p=f(a)$ then we are done, so suppose $p\ne f(a)$.  Since each postcritical point lies in the interior of a Fatou component, we can find an arc $A$ in $\pi_{f(U)}^{-1}(f(a))$ that connects $f(a)$ to $p$ and avoids any postcritical points.  Applying the path-lifting property for the covering map $f\colon K_f\setminus f^{-1}(P_f)\to K_f\setminus P_f$, we can lift $A$ to an arc $\overline{A}$ in~$K_f\setminus f^{-1}(P_f)$ that connects $a$ to some point $\overline{p}\in f^{-1}(p)$.  Since $A\cap f(\overline{U})=\{f(a)\}$, we know that $\overline{A}\cap \overline{U}=\{a\}$, so $\overline{A}$ must lie in $\pi_U^{-1}(a)$.  Then $\overline{p}\in \pi_U^{-1}(a)$ and hence $p\in f(\pi_U^{-1}(a))$ in the case where $p$ is not postcritical.  Since $f(\pi_U^{-1}(a))$ is closed, any postcritical points in $\pi_{f(U)}^{-1}(f(a))$ lie in $f(\pi_U^{-1}(a))$ as well.
\end{proof}

\begin{proof}[Proof of Theorem~\ref{thm:ContainsF}] 
Let $f$ be a postcritically finite hyperbolic polynomial with Hubbard tree $H_f$, and let $c$ be a leaf of $H_f$ which is contained in a periodic cycle of local degree two. Replacing $f$ by an iterate of $f$ (an operation which does not change the Hubbard tree), we may assume that $c$ is a fixed critical point of local degree~$2$.  We will use the ping-pong lemma for Thompson's group~$F$ (see Appendix~\ref{ap:PingPongLemmas}) to produce a subgroup of the quasisymmetry group isomorphic to~$F$.

Since $c$ is a super-attracting fixed point, it lies in some Fatou component $U$ for~$f$.  By \cite[Theorem~133.3]{Steinmetz}, we know that $U$ is a Jordan domain, so by B\"ottcher's and Carath\'{e}odory's theorems there exists a homeomorphism  $\Phi\colon \overline{\D}\to \overline{U}$ that satisfies $\Phi(0)=c$ and $\Phi(z^2)=f(\Phi(z))$ for all $z\in\overline{U}$.  Let $\gamma\colon \R\to \partial U$ be the Carath\'eodory loop, i.e.\ the map $\gamma(\theta)=\Phi(e^{i\theta})$, and note that $f(\gamma(t))=\gamma(2t)$ for all $t\in \R$.
Since $c$ is a leaf of $H_f$, the intersection $H_f\cap U$ must be a single internal ray of~$U$.  Since $f(H_f)\subseteq H_f$, this must be precisely the internal ray connecting $c$ and $\gamma(0)$.

For $S\subseteq \R$, let $L_S$ denote $\pi_U^{-1}(\gamma(S))$, and let $JL_S=J_f\cap L_S$. Since all of the postcritical points of $f$ lie on the Hubbard tree, all of the postcritical points except $c$ must lie in $L_{\{0\}}$. By Lemma~\ref{lem:ProjectionProperties}(2), it follows that all critical points of $f$ other than $c$ are contained in $L_{\{0\}}\cup L_{\{1/2\}}$. 
 By Lemma~\ref{lem:ProjectionProperties}(3), we conclude that $f$ maps $L_{\{t\}}$ homeomorphically to $L_{\{2t\}}$ for all $t\in (0,1/2)\cup (1/2,1)$.

Define a function $g_0\colon J_f\to J_f$ as follows:
\begin{enumerate}
\item $g_0$ is the identity on $JL_{\{0\}}$.\smallskip
\item $g_0$ maps $JL_{(0,1/4]}$ homeomorphically to $JL_{(0,1/2]}$ via~$f$.\smallskip
\item $g_0$ maps $JL_{(1/4,1/2)}$ homeomorphically to $JL_{(1/2,3/4)}$ via $h_2^{-1}\circ h_1$, where
\[
h_1\colon JL_{(1/4,1/2)}\to JL_{(0,1)}
\qquad\text{and}\qquad
h_2\colon JL_{(1/2,3/4)}\to JL_{(0,1)}
\]
are restrictions of $f^2$.\smallskip
\item $g_0$ maps $JL_{[1/2,1)}$ homeomorphically to $JL_{[3/4,1)}$ by a branch of~$f^{-1}$.\smallskip
\end{enumerate}
\begin{figure}[tb]
    \centering
    $\raisebox{-0.47\height}{\fbox{\includegraphics{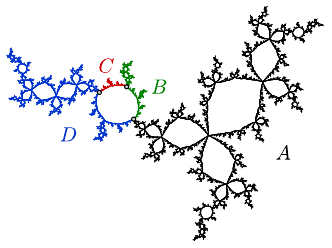}}}\hfill\longrightarrow\hfill \raisebox{-0.47\height}{\fbox{\includegraphics{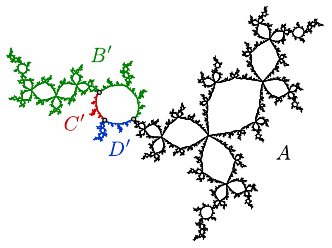}}}$
    \caption{The homeomorphism $g_0$ for the cubic Julia set from Figure~\ref{fig:CubicJuliaHubbardTree} with critical point~$-1$, where $A=JL_{\{0\}}$, $B=JL_{(0,1/4]}$, $B'=JL_{(0,1/2]}$, $C=JL_{(1/4,1/2)}$, $C'=JL_{(1/2,3/4)}$,  $D=JL_{[1/2,1)}$, and $D'=JL_{[3/4,1)}$.  Here $f$ is the second iterate of the cubic polynomial, under which $-1$ is fixed.}
    \label{fig:CubicJuliaRearrangement}
\end{figure}
See Figure~\ref{fig:CubicJuliaRearrangement} for an example of such a~$g_0$. Note that $JL_{\{0\}}$ is closed, and by Lemma~\ref{lem:ProjectionProperties}(1), we have
\[ \overline{JL_{(0,1/4]}}= JL_{(0,1/4]}\cup\{\gamma(0)\},\qquad 
\overline{JL_{(1/4,1/2)}}= JL_{(1/4,1/2)}\cup\{\gamma(1/4),\gamma(1/2)\},
\]
and
\[
\overline{JL_{[1/2,1)}}= JL_{[1/2,1)}\cup\{\gamma(0)\}.
\]
In particular, the four pieces of $g_0$ agree on the intersections of their closures, with
\[
g_0(\gamma(0))=\gamma(0),\qquad
g_0(\gamma(1/4))=\gamma(1/2),\qquad\text{and}\qquad
g_0(\gamma(1/2)) = \gamma(3/4),
\]
which proves that $g_0$ is continuous.  Since $g_0$ is bijective and $J_f$ is compact, we conclude that $g_0$ is a homeomorphism.  Furthermore, since the points $\gamma(0)$, $\gamma(1/4)$, $\gamma(1/2)$, and $\gamma(3/4)$ are periodic or pre-periodic and each of the four pieces of $f$ is canonical, it follows from Theorem~\ref{thm:PiecewiseCanonicalQuasisymmetries} that $g_0$ is a quasisymmetry.

Similarly, let $g_1\colon J_f\to J_f$ be the function defined as follows:
\begin{enumerate}
\item $g_1$ is the identity on $JL_{[0,1/2]}$.\smallskip
\item $g_1$ maps $JL_{(1/2,5/8]}$ homeomorphically to $JL_{(1/2,3/4]}$ via~$k_2^{-1}\circ k_1$, where
\[
k_1\colon JL_{(1/2,5/8]}\to JL_{(0,1/2]}
\qquad\text{and}\qquad
k_2\colon JL_{(1/2,3/4]}\to JL_{(0,1/2]}
\]
are restrictions of $f^2$ and $f$, respectively.\smallskip
\item $g_1$ maps $JL_{(5/8,3/4)}$ homeomorphically to $JL_{(3/4,7/8)}$ via $\ell_2^{-1}\circ \ell_1$, where
\[
\ell_1\colon JL_{(5/8,3/4)}\to JL_{(0,1)}
\qquad\text{and}\qquad
\ell_2\colon JL_{(3/4,7/8)}\to JL_{(0,1)}
\]
are restrictions of $f^3$.\smallskip
\item $g_1$ maps $JL_{[3/4,1)}$ homeomorphically to $JL_{[7/8,1)}$ by a branch of~$f^{-1}$.\smallskip
\end{enumerate}
Again, note that these four pieces agree on the intersections of their closures, so $g_1$ is a homeomorphism and indeed a quasisymmetry.

To apply the ping-pong lemma for Thompson's group $F$ (see Appendix~\ref{ap:PingPongLemmas}), let $R$ be the set $JL_{(1/2,1)}$.  Then:
\begin{enumerate}
    \item $g_0(R)=JL_{(3/4,1)}$ is a proper subset of $R$,\smallskip
    \item $g_1$ is the identity on $J_f-R = JL_{[0,1/2]}$, and\smallskip
    \item $g_1$ agrees with $g_0$ on $g_0(R)=JL_{(3/4,1)}$,
\end{enumerate}
so we conclude that $\langle g_0,g_1\rangle$ is isomorphic to~$F$.\end{proof}

\begin{remark}
The \newword{extended Hubbard tree} $\widehat{H}_f$ of $f$ is the union of the regulated arcs that join every pair of points in $P_f\cup C_f$, where $P_f$ is the postcritical set for $f$ and $C_f$ is the set of critical points.  If $c$ is a fixed critical point of $f$ of local degree $2$ which is a leaf of $\widehat{H}_f$, then $L_{\{1/2\}}$ contains no critical points, and therefore maps homeomorphically to $L_{\{0\}}$ under $f$.  In this case, we can use the method of proof of Theorem~\ref{thm:ContainsF} to obtain a subgroup of the quasisymmetry group isomorphic to Thompson's group~$T$.  For example, applying this when $f$ is the second iterate of $z^2-1$ and $c=-1$, we obtain one of the copies of $T$ in the basilica Thompson group (see~\cite{BeFo1}). \end{remark}

\begin{remark}
If the Hubbard tree for a postcritically finite hyperbolic polynomial $f$ has a fixed critical point $c$ whose local degree is some integer $n\geq 3$, one might expect that Theorem~\ref{thm:ContainsF} would generalize to give a copy of the $n$-ary Thompson group $F_n$ inside the quasisymmetry group.  However, it turns out that the proof of Theorem~\ref{thm:ContainsF} does not go through for $n\geq 3$.

For example, suppose $c$ is a leaf of $H_f$ which is a fixed critical point of local degree~$3$. Let $U$ be the Fatou component containing $c$, choose a B\"ottcher homeomorphism $\Phi\colon \overline{U}\to\overline{\D}$ satisfying $\Phi(c)=0$ and $\Phi(f(z))=\Phi(z)^3$, and define $L_S$ as in the proof of Theorem~\ref{thm:ContainsF}.  Then both $L_{\{1/3\}}$ and $L_{\{2/3\}}$ might contain critical points, so there need not be any canonical homeomorphism from either of these to $L_{\{0\}}$, and indeed there may be no canonical homeomorphism $L_{\{1/3\}}\to L_{\{2/3\}}$.
More generally, if $p=a/3^j$ and $q=b/3^k$ where $a\equiv 1\;(\mathrm{mod}\;3)$ and $b\equiv 2\;(\mathrm{mod}\;3)$, there is not necessarily any canonical homeomorphism $L_{\{a\}}\to L_{\{b\}}$.
This difficulty leaves us unable to construct any nontrivial elements of the $3$-ary Thompson group~$F_3$.  For example, the piecewise linear homeomorphism
\[
g(t) = \begin{cases}3t & \text{if }t\in \bigl[0,\tfrac19\bigr], 
 \\[3pt] t+\tfrac29 & \text{if }t\in \bigl[\tfrac19,\tfrac23\bigr], \\[3pt]
 \tfrac13 t + \tfrac23 & \text{if }t\in \bigl[\tfrac23,1\bigr],
 \end{cases}
\]
lies in $F_3$ and maps $1/3$ to $5/9$, but there need not be a canonical homeomorphism $L_{\{1/3\}}\to L_{\{5/9\}}$, so we cannot necessarily find a piecewise canonical homeomorphism that acts as $g$ on $\partial U$.  \end{remark}

\appendix
\section{Removing Finitely Many Points}
\label{ap:RemovingFiniteSets} 

The goal of this section is to prove the following theorem.

\begin{theorem}\label{thm:FinitelyManyComponents}
Let $f\colon \Chat\to \Chat$ be a hyperbolic rational function with connected Julia set~$J_f$, and let $S$ be a finite set of periodic or preperiodic points in~$J_f$.  Then $J_f\setminus S$ has finitely many components.
\end{theorem}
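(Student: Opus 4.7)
The plan is to bound the number of components of $J_f\setminus S$ by a sum of ``local valences'' at the points of $S$, each of which I show is finite using the expanding dynamics of $f$.

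Since $f$ is hyperbolic and $J_f$ is connected, $J_f$ is locally connected by \cite[Theorem~19.2]{Milnor}, so the open set $J_f\setminus S$ has each of its connected components open in $J_f$. If $D$ is such a component, then its topological frontier $\partial D$ in $J_f$ is nonempty (otherwise $D$ would be clopen in the connected space $J_f$) and contained in $S$ (otherwise a frontier point would belong to another component of $J_f\setminus S$ that intersects $D$, contradicting disjointness of components).

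For each $s\in S$, define the \emph{local valence} $v(s)$ to be the eventual value of the number of connected components of $U\setminus\{s\}$ as $U$ ranges over a shrinking base of connected open neighborhoods of $s$ in $J_f$. I claim $v(s)<\infty$. Since $f$ is hyperbolic, no critical point of $f$ lies in $J_f$, so each iterate $f^k$ is a local homeomorphism from a connected neighborhood of $s$ in $J_f$ onto a connected neighborhood of $f^k(s)$, and therefore preserves the local valence. Choosing $k$ so that $f^k(s)$ is periodic reduces to the case of periodic $s$. For $s$ periodic of period $n$, set $g=f^n$ and use the Koenigs linearization at the repelling fixed point $s$: there is a conformal chart $\phi$ sending a neighborhood of $s$ to a neighborhood of $0\in\mathbb{C}$ and conjugating $g$ to $z\mapsto \lambda z$, where $|\lambda|>1$. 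The image $\phi(J_f)$ is a locally connected closed subset of a neighborhood of $0$, locally invariant under $z\mapsto\lambda^{\pm 1}z$, with empty interior (since $J_f\ne\widehat{\mathbb{C}}$ in the hyperbolic case, so $J_f$ is nowhere dense). A dynamical argument using this self-similar expanding structure then shows that the number of local branches of $\phi(J_f)$ at $0$ is finite, so $v(s)<\infty$.

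To conclude, for each component $D$ of $J_f\setminus S$, pick $s\in\partial D\subseteq S$ and a small connected neighborhood $U$ of $s$ in $J_f$ disjoint from $S\setminus\{s\}$. Some component $B$ of $U\setminus\{s\}$ meets $D$, and since $B$ is connected and disjoint from $S$, it lies entirely in $D$. Distinct components of $J_f\setminus S$ cannot share such a component $B$ (since $B$ is connected), so the number of components of $J_f\setminus S$ is bounded by the total number of local branches $\sum_{s\in S}v(s)$, which is finite. The main obstacle is the finite-valence claim at periodic points: for polynomial Julia sets this reduces to the well-known finiteness of external rays landing at any periodic point via the Carath\'eodory quotient $S^1\to J_f$, while for general hyperbolic rational maps it follows from a careful analysis of the Koenigs linearization combined with the empty interior and local connectedness of $J_f$, showing that a $\lambda^{\pm 1}$-invariant self-similar set with empty interior cannot support infinitely many local branches at~$0$.
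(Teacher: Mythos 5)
Your overall counting scheme is sound: components of $J_f\setminus S$ are open with frontier in $S$, each such component absorbs a whole ``local branch'' at some $s\in S$, distinct components absorb distinct branches, and the reduction from preperiodic to periodic points via the fact that hyperbolicity keeps critical points off $J_f$ is fine. The genuine gap is the central finiteness claim, which you assert rather than prove: that for a repelling periodic point $s$ the set $U\setminus\{s\}$ has finitely many components for some small connected neighborhood $U$. The properties you invoke in the Koenigs chart --- local connectivity, empty interior, and local invariance under $z\mapsto\lambda^{\pm1}z$ --- are demonstrably insufficient to force this. For instance, with $\lambda$ real the set $\{0\}\cup\bigcup_{k}\{re^{i\theta_k}:r\ge 0\}$, for angles $\theta_k$ accumulating at a limit angle that is also included, is closed, locally connected, nowhere dense, and invariant under $z\mapsto\lambda z$, yet has infinitely many local branches at~$0$. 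So no ``soft'' self-similarity argument can close this step; you must use more of the global structure of the Julia set.

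This is precisely where the paper's proof spends its effort, and by a different route: it shows (Lemma~\ref{lem:ComplementaryComponents}) that only finitely many Fatou components have $s$ on their boundary, and that each contributes only finitely many accesses to $s$. The inputs are Sullivan's finiteness of periodic Fatou components, Pommerenke's theorem producing a periodic access, and Petersen's theorem that \emph{every} access to $s$ from a periodic Fatou component is then periodic, combined with injectivity of $f$ near $s$ acting on accesses. The count of components of $J_f\setminus S$ is then obtained by filling in the Fatou components whose boundaries avoid $S$ (Lemma~\ref{lem:ComponentsFilling}) and observing that each complementary component must contain one of the finitely many components of $\partial U\setminus S$. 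Your parenthetical remark about external rays in the polynomial case is the right instinct --- finiteness of rays landing at a periodic point is exactly this kind of access-counting theorem --- but for general hyperbolic rational maps you need to supply the analogous argument, and your proposal does not. As written, the proof has a hole at its crux.
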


We need the following result about the topology of~$J_f$.

\begin{lemma}\label{lem:ComplementaryComponents}Let $f\colon\Chat\to\Chat$ be a hyperbolic rational map with connected Julia set~$J_f$.
\begin{enumerate}
    \item If $p\in J_f$ is a periodic point, then any Fatou component that has $p$ on its boundary is periodic.\smallskip
    \item If $p\in J_f$ is periodic or preperiodic, then there exist at most finitely many Fatou components $U$ for which $p\in \partial U$, and for any such $U$ the complement $\partial U\setminus \{p\}$ has finitely many connected components.
\end{enumerate}
 \end{lemma}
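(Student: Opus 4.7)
The plan is to reduce to the case where $p$ is a repelling fixed point of $f$, use Carath\'eodory's theorem to realize the relevant boundary topology via the unit circle, and then extract all three claims from a single finiteness statement for ``accesses'' of Fatou components at $p$.

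First I would reduce to a fixed point. For (1), if $p$ has period $n$, replace $f$ with the iterate $g=f^n$; the Fatou set, Julia set, and the relation $p\in\partial U$ are all preserved, and a Fatou component is $f$-periodic if and only if it is $g$-periodic. For (2), it suffices to handle the periodic case: if $p$ is preperiodic with $f^m(p)=q$ periodic, then the rational map $f^m$ induces a finite-to-one map (with fibers of size at most $\deg(f)^m$) from $\mathcal U(p)$ into $\mathcal U(q)$, and similarly from components of $\partial U\setminus\{p\}$ into components of $\partial f^m(U)\setminus\{q\}$, so finiteness at $q$ gives finiteness at $p$.

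With $p$ a repelling fixed point of $f$, each Fatou component $U\in\mathcal U(p)$ is simply connected (since $\Chat\setminus U$ is the connected union of $J_f$ with the closures of the other Fatou components), and it is standard in the hyperbolic case that $\partial U$ is locally connected for every such $U$ (Milnor's Lemma~19.3 handles periodic $U$ directly, and non-periodic components reduce to this case via an iterate of $f$). Carath\'eodory's theorem then provides a continuous extension of the Riemann map $\Phi_U\colon\overline\D\to\overline U$. I would call each element of $\Phi_U^{-1}(p)$ an \emph{access} of $U$ at $p$, and set
\[
\mathcal A(p)=\bigsqcup_{U\in\mathcal U(p)}\Phi_U^{-1}(p).
\]
The components of $\partial U\setminus\{p\}$ correspond bijectively to the components of $\partial\D\setminus\Phi_U^{-1}(p)$. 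Since $p$ is not a critical point of $f$, the map $f$ is a local homeomorphism at $p$ (concretely, via the Koenigs linearization conjugating $f$ to $w\mapsto f'(p)\,w$), so $f$ induces a bijection $f_*\colon\mathcal A(p)\to\mathcal A(p)$ sending each access of $U$ at $p$ to an access of $f(U)$ at $p$. Once $\mathcal A(p)$ is shown to be finite, all three assertions follow: statement~(2)(a) because $|\mathcal U(p)|\leq|\mathcal A(p)|$, statement~(2)(b) because $\Phi_U^{-1}(p)\subseteq\mathcal A(p)$, and statement~(1) because $f_*$, as a bijection of a finite set, forces every access and hence every Fatou component in $\mathcal U(p)$ to be $f$-periodic.

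The main obstacle is proving $\mathcal A(p)$ is finite. I would attempt this in the Koenigs linearizing coordinate $\phi$ at $p$, in which the dynamics is $w\mapsto\lambda w$ with $|\lambda|>1$ and the accesses at $p$ correspond to ends at $0$ of the connected components of the Fatou set inside a small disk around $0$. Distinct ends are separated by connected pieces of $J_f$ accumulating at $0$, and combining the expansion of $f$ on $J_f$ (from hyperbolicity) with the local connectedness of $J_f$ at $p$ (Milnor Theorem~19.2) forces these separating pieces to have non-infinitesimal size in the expanding metric, so only finitely many ends can cluster at $0$.
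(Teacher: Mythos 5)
Your reduction framework is sound and is in fact close in spirit to the paper's: both arguments work with accesses (the fibers $\Phi_U^{-1}(p)$), both use local injectivity of $f$ at $p$ to see that $f$ acts injectively on accesses at a fixed $p$, and your deduction of statements (1) and (2) from the finiteness of $\mathcal{A}(p)$ is correct, as is the pullback reduction for preperiodic $p$. The problem is that the finiteness of $\mathcal{A}(p)$ --- simultaneously ``only finitely many Fatou components touch $p$'' and ``each such component has finitely many accesses at $p$'' --- is essentially the entire content of the lemma, and this is exactly where your proposal has a gap. The paper does not prove it by a local argument at $p$: it first proves (1) using Sullivan's no-wandering-domains theorem (so $U$ is eventually periodic), Pommerenke's theorem (to produce a \emph{periodic} access to $p$ from the periodic component $V=f^k(U)$), and Petersen's theorem (every access to $p$ from $V$ is periodic), concluding $U=V$ from injectivity on accesses; finiteness of the set of components touching $p$ then comes from Sullivan's finiteness of periodic Fatou components, and finiteness of accesses per component from their periodicity.

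Your substitute argument --- linearize at $p$, note that distinct ends are separated by connected pieces of $J_f$ of ``non-infinitesimal size in the expanding metric,'' and conclude finiteness --- does not work as stated. First, two distinct accesses from the same component $U$ are separated by $\Chat\setminus U$, which need not be a connected piece of $J_f$ (it may contain other Fatou components), so the separating sets have to be identified more carefully. Second, and more seriously, the implication ``separating continua of definite relative size, hence finitely many ends'' is false without further input: the Hawaiian earring (the union of the circles of radius $1/n$ through a common point) is compact, connected, and locally connected, yet its distinguished point lies on the boundary of infinitely many complementary components, each separated from the others by an entire circle; likewise a fundamental annulus for $w\mapsto\lambda w$ can contain infinitely many pairwise disjoint connected sets whose diameters are bounded below. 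So local connectivity plus a diameter lower bound cannot close the argument; one needs either the homological counting argument in the quotient torus $\C^*/\langle\lambda\rangle$ that underlies the finiteness of periodic rays landing at a repelling point, or the paper's route through Sullivan, Pommerenke, and Petersen. As written, the crucial step is unproved.
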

\begin{proof}
Suppose first that $p$ is periodic, and let $U$ be a Fatou component that has $p$ on its boundary.  Replacing $f$ by an iterate of $f$, we may assume that $p$ is a fixed point. Since $J_f$ is connected, we know that $U$ is simply connected.  Since $f$ is hyperbolic, it follows that $\partial U$ is locally connected~\cite[Lemma~19.3]{Milnor}, so $p$ is accessible from~$U$.  We wish to prove that $U$ is periodic.

By Sullivan's nonwandering theorem (see~\cite[Theorem~16,4]{Milnor}), we know that $U$ is periodic or preperiodic.  Replacing $f$ by an iterate, there exists a $k\geq 0$ so that $V = f^k(U)$ is a fixed Fatou component.  Again $\partial V$ is locally connected, so by Carath\'{e}odory's theorem there exists a map $\Phi\colon \overline{\D}\to \overline{V}$ that restricts to a conformal isomorphism $\D\to V$.  Note that there exists an analytic map $g\colon \overline{\mathbb{D}}\to\overline{\mathbb{D}}$ (namely a finite Blaschke product) such that $\Phi\circ g = f\circ \Phi$.  By a theorem of Pommerenke~\cite[Theorem~2]{Pommer}, there exists a $\theta\in \partial\mathbb{D}$ which is periodic under $g$ such that~$\Phi(\theta)=p$.

This value of $\theta$ determines a periodic access to $p$ in~$V$.  By a theorem of Petersen \cite[Theorem~A]{Petersen}, it follows that any access to $p$ from $V$ is periodic.  Now, since $f$ is one-to-one in a neighborhood of~$p$, the induced action of $f$ on accesses to $p$ from the Fatou set is one-to-one.  But $f^k$ maps some access to $p$ from $U$ to a periodic access to $p$ from $V$, so it follows that $U=V$, which proves~(1).

For (2), in the case where $p$ is periodic, Sullivan proved that a rational map has only finitely many periodic Fatou components~\cite{Sullivan}.  By (1), it follows that $p$ lies on the boundary of only finitely many Fatou components.  Moreover, if $U$ is any such component and $\Phi\colon \overline{\mathbb{D}}\to \overline{U}$ is a map as above, then $\Phi^{-1}(p)$ must be finite since $U$ has only finitely many accesses to~$p$.  In particular, $\partial\mathbb{D}\setminus \Phi^{-1}(p)$ has only finitely many connected components, and therefore $\partial U\setminus \{p\}$ has only finitely many connected components as well.

All that remains is the case where $p$ is preperiodic.  In this case, there exists an $n\geq 1$ so that $f^n(p)$ is a periodic point.  By the above argument, $f^n(p)$ lies on the boundary of only finitely many Fatou components, and since each Fatou component has finitely many preimages under $f^n$ it follows that $p$ lies on the boundary of only finitely many Fatou components.  Moreover, if $U$ is a Fatou component that has $p$ on its boundary, then the mapping $f^n\colon U\to f^n(U)$ induces a finite-to-one mapping from the accesses to $p$ in $U$ to the accesses to $f^n(p)$ in~$f^n(U)$.  Since there are only finitely many accesses to $f^n(p)$ in $f^n(U)$ by the argument above, it follows that there are only finitely many accesses to $p$ in~$U$, and therefore $\partial U\setminus \{p\}$ has finitely many connected components.
\end{proof}


If $J_f$ is a connected Julia set and $E\subseteq J_f$, define the \newword{filling} of $E$ to be the union of $E$ with all the Fatou components whose boundaries are contained in~$E$.

\begin{lemma}\label{lem:ComponentsFilling}
Let $f\colon \Chat\to \Chat$ be a hyperbolic rational map with connected Julia set~$J_f$, and let $S$ be a closed subset of $J_f$. Then the filling of $J_f\setminus S$ has the same number of connected components as $J_f\setminus S$.
\end{lemma}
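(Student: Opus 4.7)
The plan is to exhibit an explicit bijection between the connected components of $E := J_f \setminus S$ and those of its filling $\tilde{E}$. For each component $C_j$ of $E$, set
\[
\tilde{C}_j := C_j \cup \bigcup \{U : U \text{ is a Fatou component with } \partial U \subseteq C_j\},
\]
and I will show these sets are precisely the connected components of $\tilde{E}$.

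Three elementary properties come first. \emph{(i)} Each $\tilde{C}_j$ is connected, since every $U\cup\partial U$ appearing in the union is connected and meets $C_j$ along $\partial U$. \emph{(ii)} Distinct $\tilde{C}_j$'s are disjoint: because $J_f$ is connected, every Fatou component $U$ is simply connected with locally connected boundary~\cite[Lemma~19.3]{Milnor}, so by Carath\'eodory the Riemann map $\D \to U$ extends continuously to $\overline{\D}\to\overline{U}$, exhibiting $\partial U$ as a continuous image of $\partial\D$ and hence connected, so $\partial U$ lies in a single component of $E$ whenever $\partial U\subseteq E$. \emph{(iii)} Every point of $\tilde{E}$ lies in some $\tilde{C}_j$, by (ii) and the definition of $\tilde{E}$. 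It then suffices to show each $\tilde{C}_j$ is open in $\tilde{E}$: together with (i)--(iii), this exhibits $\tilde{E}$ as a disjoint union of open connected sets, which must be the connected components.

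For openness at $p\in\tilde{C}_j$, the case where $p$ lies in a Fatou component $U\subseteq\tilde{C}_j$ is trivial, since $U$ itself is a neighborhood. Assume then that $p\in C_j$. Using local connectedness of $J_f$~\cite[Theorem~19.2]{Milnor} and finiteness of $S$, I would choose an open connected neighborhood $V$ of $p$ in $\hat{\mathbb{C}}$ such that $V\cap J_f\subseteq C_j$; this is possible because $p$ has positive distance from $S$, so one first restricts to a Euclidean ball disjoint from $S$ and then uses local connectedness inside $J_f$ before passing back to an open set in $\hat{\mathbb{C}}$. I claim $V\cap\tilde{E}\subseteq\tilde{C}_j$. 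Points of $V\cap E = V\cap J_f$ lie in $C_j\subseteq\tilde{C}_j$ by construction. For a Fatou component $U\subseteq\tilde{E}$ meeting $V$, note $p\in V\setminus U$ (since $p\in J_f$ while $U$ is in the Fatou set) and $V\cap U\ne\emptyset$ by assumption; hence any path in the path-connected open set $V$ from a point of $U$ to $p$ must cross $\partial U$ at some point lying in $V$. So $\partial U\cap V\ne\emptyset$, which forces $\partial U\cap C_j\ne\emptyset$; then connectedness of $\partial U$ together with $\partial U\subseteq E$ gives $\partial U\subseteq C_j$, so $U\subseteq\tilde{C}_j$.

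The one real subtlety in the plan is the connectedness of $\partial U$ for each Fatou component $U$. This is not automatic for arbitrary simply connected domains, but is guaranteed here by the local connectedness of Fatou-component boundaries for hyperbolic rational maps with connected Julia set. Once that is granted, the remainder is elementary point-set topology, turning on the observation that an open Fatou component cannot contain the Julia-set point $p$ and so must expose its boundary inside any connected neighborhood of $p$ that it meets.
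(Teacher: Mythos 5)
Your proof is correct and follows essentially the same route as the paper's: fill each component of $J_f\setminus S$, note the fillings are connected, pairwise disjoint (via connectedness of Fatou-component boundaries), and cover the filling, then prove openness at Julia-set points using a connected neighborhood avoiding $S$ and the fact that a connected set meeting a Fatou component $U$ but containing a point outside $\overline{U}$ must meet $\partial U$. One small slip: you invoke ``finiteness of $S$,'' but the hypothesis (and all your argument actually uses) is only that $S$ is closed, which already gives $p$ positive distance from $S$.
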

\begin{proof}
Let $F$ be the filling of $J_f\setminus S$, let $\{E_\alpha\}_{\alpha\in\mathcal{I}}$ be the connected components of $J_f\setminus S$, and let $F_\alpha$ denote the filling of $E_\alpha$.  We claim that the $F_\alpha$ are precisely connected components of~$F$.

Clearly each $F_\alpha$ is connected.  Furthermore, since $J_f$ is connected, every Fatou component for $f$ has connected boundary, so any Fatou component which is contained in $F$ must be contained in one of the $F_\alpha$.  It follows that $F$ is the disjoint union of the $F_\alpha$'s, so it suffices to prove that each $F_\alpha$ is open in $F$.

Let $q\in F_\alpha$.  If $q$ lies in some Fatou component, then that Fatou component is a neighborhood of $q$ in $F_\alpha$. Suppose then that $q\in E_\alpha$.  Since $J_f$ is locally connected and $S$ is closed, there exists a neighborhood $V$ of $q$ in $\Chat$ which is disjoint from $S$ such that $V\cap J_f$ is connected.  In particular, $V$ does not intersect any other $E_\beta$. Then $V$ does not intersect any other $F_\beta$, for if $U$ is a Fatou component contained in $F_\beta$, then $V$ does not intersect $\partial U$ and hence does not intersect~$U$. We conclude that $V\cap F\subseteq F_\alpha$, which proves that $F_\alpha$ is open in~$F$.
\end{proof}

\begin{proof}[Proof of Theorem~\ref{thm:FinitelyManyComponents}]
Let $f\colon \Chat\to \Chat$ be a hyperbolic rational function with connected Julia set~$J_f$, and let $S$ be a finite set of periodic or preperiodic points in~$J_f$.  We wish to prove that $J_f\setminus S$ has finitely many components.  By Lemma~\ref{lem:ComponentsFilling}, it suffices to prove that the filling of $J_f\setminus S$ has only finitely many connected components.

Since each point of $S$ is periodic or preperiodic, it follows from  Lemma~\ref{lem:ComplementaryComponents} that there exist only finitely many Fatou components $U_1,\ldots, U_n$ whose boundaries intersect~$S$.  Let $U=U_1\cup\cdots \cup U_n$, and observe that the filling of $J_f\setminus S$ is precisely the complement of $U\cup S$.  Thus it suffices to prove that $\Chat\setminus (U\cup S)$ has only finitely many connected components.

Let $C$ be a connected component of $\Chat\setminus (U\cup S)$.  Then $\partial C\subseteq \partial U\cup S$, and since $\partial C$ is infinite it follows that $\partial C$ intersects $\partial U\setminus S$.  Since $\partial U \setminus S \subseteq \Chat\setminus (U\cup S)$, we conclude that $C$ contains a connected component of $\partial U\setminus S$.  But $\partial U\setminus S$ has finitely many connected components by Lemma~\ref{lem:ComplementaryComponents}, and therefore $\Chat\setminus (U\cup S)$ has only finitely many connected components as well.
\end{proof}

\section{Ping-Pong Lemmas}
\label{ap:PingPongLemmas} 

In this section we state ping-pong lemmas for free products and for Thompson's group~$F$.  The version for free products is well-known.

\begin{pingpongfree}
Let $G$ be a group acting on a set $X$, and let $H$ and $K$ be subgroups of $G$. Suppose there exist subsets $X_H,X_K\subset X$ such that
\begin{enumerate}
    \item $h(X_K)$ is a proper subset of $X_H$ for all nontrivial $h\in H$, and\smallskip
    \item $k(X_H)$ is a proper subset of $X_K$ for all nontrivial $k\in K$.
\end{enumerate}
Then the subgroup of $G$ generated by $H$ and $K$ is isomorphic to $H*K$.
\end{pingpongfree}
\begin{proof}
See \cite[II.24]{DLH}. \end{proof}

We also need a ping-pong type lemma for Thompson's group~$F$.  The following lemma is related to, but not the same as, the ping-pong lemma for $F$ proven by Bleak, Brin, Kassabov, Moore, and Zaremsky in~\cite{BBKMZ}.

\begin{pingpongF}
Let $G=\langle g_0,g_1\rangle$ be a group acting faithfully on a set $X$, and suppose there exists a set $R\subset X$ such that
\begin{enumerate}
    \item $g_0(R)$ is a proper subset of $R$, \smallskip
    \item $g_1$ is the identity on $X-R$, and\smallskip
    \item $g_1$ agrees with $g_0$ on $g_0(R)$.
\end{enumerate}
Then $G$ is isomorphic to Thompson's group~$F$.
\end{pingpongF}

Before we prove this lemma, recall that Thompson's group $F$ is the group of homeomorphisms of $[0,1]$ generated by the functions
\[\
x_0(t) = \begin{cases}2t & \text{if }t\in\bigl[0,\frac{1}{4}\bigr], \\[3pt]
t+\frac{1}{4} & \text{if } t\in \bigl[\frac{1}{4},\frac{1}{2}\bigr], \\[3pt]
\frac{1}{2}t+\frac{1}{2} & \text{if }t\in \bigl[\frac{1}{2},1\bigr],\end{cases}
\qquad\text{and}\qquad
x_1(t) = \begin{cases}t & \text{if }t\in\bigl[0,\frac{1}{2}\bigr], \\[3pt] 2t-\frac{1}{2} & \text{if }t\in\bigl[\frac{1}{2},\frac{5}{8}\bigr], \\[3pt]
t+\frac{1}{8} & \text{if } t\in \bigl[\frac{5}{8},\frac{3}{4}\bigr], \\[3pt]
\frac{1}{2}t+\frac{1}{2} & \text{if }t\in \bigl[\frac{3}{4},1\bigr].\end{cases}
\]
Note that $x_0$ and $x_1$ satisfy the hypotheses of the proposition with $X=[0,1]$ and $R=\bigl[\frac{1}{2},1\bigr]$, where $x_0(R) = \bigl[\frac{3}{4},1\bigr]$.

With respect to these generators, $F$ has presentation
\[
\bigl\langle x_0,x_1 \;\bigr|\; x_0^2 x_1x_0^{-2} = (x_1x_0)x_1(x_1x_0)^{-1},x_0^3x_1x_0^{-3}=(x_1x_0^2)x_1(x_1x_0^2)^{-1}\bigr\rangle.
\]
See \cite[Theorem 3.1]{CFP} for a proof, where these functions $x_0$ and $x_1$ are denoted $A^{-1}$ and $B^{-1}$.

\begin{proof}
Since $g_0$ and $g_1$ agree on $g_0(R)$, we know that $g_0^2$ and $g_1g_0$ agree on~$R$. Similarly, since $g_0$ and $g_1$ agree on $g_0^2(R)\subseteq g_0(R)$, we know that $g_0^3$ and $g_1g_0^2$ agree on~$R$.  Since $g_1$ is the identity on $X-R$, it follows that
\[
g_0^2 g_1 g_0^{-2} = (g_1g_0)g_1(g_1g_0)^{-1} \qquad\text{and}\qquad g_0^3 g_1 g_0^{-3}
 = (g_1g_0^2)g_1(g_1g_0^2)^{-1}.
 \]
Thus there exists an epimorphism $\varphi\colon F\to G$ defined by $\varphi(x_0)=g_0$ and $\varphi(x_1)=g_1$.

To prove that $\varphi$ is an isomorphism, recall that every proper quotient of $F$ is abelian \cite[Theorem~4.3]{CFP}.  Thus, it suffices to prove that $G$ is nonabelian, i.e. that $g_0$ and $g_1$ do not commute. Since $g_1$ is the identity on $X-R$, we know that $g_1(R)=R$, so $g_1$ does not agree with $g_0$ on $R$.  Then $g_0g_1g_0^{-1}$ does not agree with $g_0g_0g_0^{-1}=g_0$ on $g_0(R)$.  Since $g_1$ agrees with $g_0$ on~$g_0(R)$, we conclude that $g_0g_1g_0^{-1}\ne g_1$, so $g_0$ and $g_1$ do not commute.
\end{proof}

\subsection*{Acknowledgements}
We would like to thank Mikhail Lyubich, Sergiy Merenkov, and Matteo Tarocchi for helpful conversations and suggestions. We would also like to thank Collin Bleak, Peter Cameron, Justin Moore, the University of St Andrews, and Stockton University for providing funds for travel and lodging in support of this work.  Finally, we would like to thank several undergraduate students at Bard College and Stockton University for helpful conversations on questions related to this research, including Rebecca Claxton, Wayne Laffitte, Yuan Liu, Emily Mahler, Will Smith, Jasper Weinburd, and Shuyi Weng.

\bigskip
\newcommand{\doi}[1]{\href{https://doi.org/#1}{Crossref}}
\newcommand{\arXiv}[1]{\href{https://arxiv.org/abs/#1}{arXiv}}
\newcommand{\biblink}[2]{\href{#1}{#2}}
\renewcommand{\MR}[1]{\href{https://mathscinet.ams.org/mathscinet-getitem?mr=#1}{\mbox{MathSciNet}}}
\bibliographystyle{plain}

\end{document}